\begin{document}

\title {Twisted equivariant K-theory, groupoids and proper actions}

\author[Jose Cantarero]{Jose Cantarero}
\address{Department of Mathematics
University of British Columbia, Vancouver, B.C., Canada}
\email{cantarer@math.ubc.ca}

\def \P{{\mathbb P}}
\def \C{{\mathbb C}}            
\def \N{{\mathbb N}} 
\def \Z{{\mathbb Z}}
\def \R{{\mathbb R}}
\def \F{{\mathbb F}}
\def \G{{\mathcal G}}
\def \H{{\mathcal H}}
\def \K{{\mathcal K}}
\def \Fa{{\mathcal F}}
\def \Hy{{\mathbb H}}

\begin{abstract}

In this paper we define twisted equivariant $K$-theory for actions of Lie groupoids. For a Bredon-compatible Lie groupoid 
$\G $, this defines a periodic cohomology theory on the category of finite $\G $-CW-complexes with $\G$-stable projective 
bundles. A classification of these bundles is shown. We also obtain a completion theorem and apply these results to proper 
actions of groups.

\end{abstract}

\maketitle

\newtheorem{thm}{Theorem}[section]
\newtheorem{cor}[thm]{Corollary}
\newtheorem{lemma}[thm]{Lemma}
\newtheorem{prop}[thm]{Proposition}
\newtheorem{conj}[thm]{Conjecture}
\theoremstyle{definition}
\newtheorem{defn}[thm]{Definition}
\newtheorem{example}[thm]{Example}

\noindent Key words: Twisted K-theory, groupoids, proper actions, completion theorem.

\noindent Mathematics Subject Classification 2000: 19L47.

\section{Introduction}

Complex equivariant $K$-theory for actions for Lie groupoids was defined in \cite{C} by using a special class of vector bundles, 
called extendable vector bundles. This defines a cohomology theory on the category of $\G$-spaces. If we restrict our attention
to finite $\G$-CW-complexes and Bredon-compatible Lie groupoids, then we have enough extendable vector bundles to obtain 
equivariant $K$-theory when the action is given by a Lie group. Moreover, in this case the cohomology theory turns
out to be periodic as well.

An interesting feature of these cohomology theories is the invariance under weak equivalence.  
Since $\G$-CW-complexes are made out of cells for which the action of the groupoid is equivalent to the action of a compact
Lie group on a finite $G$-CW-complex, we are able to use results of classical equivariant $K$-theory to prove analogous 
properties for groupoids. 

Atiyah and Segal twist equivariant $K$-theory for actions of a compact Lie group $G$ using $G$-stable projective
bundles \cite{AS2}. Since stable projective bundles and sections behave well under weak equivalences, it seems natural to use 
 $\G$-stable projective bundles, which can be defined in an similar way, to twist $\G$-equivariant $K$-theory.

\begin{defn}

Let $\G $ be a Lie groupoid and $X$ a $\G $-space. A $\G$-projective bundle on $X$ is a $\G $-space $P$ with a $\G$-equivariant map
$ p : P \longrightarrow X $ such that there exists an equivariant open covering $ \{ U_i \} $ of $X$ for which $ P_{|U_i} =
U_i \times _{\pi } \P (E) $ for some $\G $-Hilbert bundle $E$ on $G_0 $. Moreover, We shall call $P$ a $\G$-stable projective 
bundle if $ P \cong P \otimes \pi^*\P (U(\G )) $ for some locally universal $\G$-Hilbert bundle $U(\G )$ on $G_0$.

\end{defn}

$G$-equivariant $K$-theory can be represented by a space of Fredholm operators on a $G$-stable Hilbert space. This is used
to construct twisted $K$-theory. For a $G$-stable projective bundle, we can consider a suitable bundle of Fredholm operators 
associated to it and define twisted $K$-theory with the sections of this bundle \cite{AS2}. For actions of groupoids, we have 
a representability theorem if we consider not all maps into such a space of Fredholm operators, but only those which are 
extendable:

\begin{defn}

Let $H$ be a stable representation of $\G$, that is, a $\G$-Hilbert bundle on $G_0$ such that $ H \oplus U(\G) = H $. We say 
that a $\G$-equivariant map $f: X \to Fred'(H) $ is extendable if there is another $\G$-equivariant map 
$g: X \to Fred'(H) $ such that $gf = v \pi _X $ for some section $v$ of $Fred'(H) \to G_0 $, where $ \pi _X : X \to G_0 $
is the anchor map.

\end{defn}

\begin{thm}

Let $H$ be a stable representation of a Bredon-compatible finite Lie groupoid $\G$. Then:
\[ K_{\G}(X) = [ X , Fred'(H) ]_{\G}^{ext} \]

\end{thm}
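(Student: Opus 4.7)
The plan is to construct mutually inverse natural maps between $[X, Fred'(H)]_\G^{ext}$ and $K_\G(X)$, modeled on the classical Atiyah--Segal construction but tracking the extendability condition at every step.

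First, I would define $\Phi : [X, Fred'(H)]_\G^{ext} \to K_\G(X)$ via the index construction: after an equivariant perturbation making the kernel and cokernel of constant rank, the virtual bundle $[\ker f] - [\mathrm{coker}\, f]$ gives a class in $K_\G(X)$, and the extendability witness $gf = v \pi_X$ translates directly into the corresponding witness of extendability for this virtual bundle in the sense of \cite{C}. Conversely, for $\Psi : K_\G(X) \to [X, Fred'(H)]_\G^{ext}$, a class represented by an extendable pair $(E,F)$ can be realized by embedding $E$ and $F$ as orthogonal $\G$-subbundles of $X \times_\pi H$ (using stability of $H$ and the existence of extensions of $E, F$ to $G_0$), and then applying a standard shift-of-projections construction on $H \oplus H$ to produce a Fredholm family whose index is $[E]-[F]$; the global extensions of $E$ and $F$ over $G_0$ supply the map $g$ and section $v$ needed for extendability.

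The verification that $\Phi$ and $\Psi$ are mutually inverse reduces, via the cellular structure of the finite $\G$-CW-complex $X$ and the Mayer--Vietoris sequence on both sides, to a check on individual cells. On each cell the $\G$-action is equivalent to a compact Lie group action on a finite $G$-CW-complex, so one can invoke the classical Atiyah--Segal representability theorem \cite{AS2} directly. Gluing these cellular bijections uses the extendability condition to guarantee that restriction maps and boundary connecting homomorphisms commute with $\Phi$ and $\Psi$ across cell attachments.

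The main obstacle will be controlling the extendability condition along homotopies and isomorphisms: one must verify both that a homotopy between two extendable Fredholm maps yields an equality of the resulting extendable index classes, and that an isomorphism of extendable virtual bundles lifts to an extendable homotopy of Fredholm families. This is where the Bredon-compatibility hypothesis on $\G$ enters decisively, since it supplies the equivariant global sections and extensions over $G_0$ that promote fiberwise or cellwise constructions to globally extendable ones. Finiteness of $\G$ and of the $\G$-CW-structure then keeps the induction finite and allows one to propagate the identification from skeleton to skeleton without loss of extendability.
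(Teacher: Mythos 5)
Your overall skeleton --- reduce to $\G$-cells, use Mayer--Vietoris and the five lemma to climb the skeleta of the finite $\G$-CW-complex, and invoke the compact Lie group case on each cell --- is exactly the paper's. But the core mechanism is genuinely different. The paper never constructs an index map or a shift-of-projections inverse. Instead it shows separately that $K_{\G}^*(-)$ (defined via extendable vector bundles in \cite{C}) and $RK_{\G}^*(-) = [-,\Omega^n Fred'(H)]_{\G}^{ext}$ are both $\Z/2$-graded cohomology theories on finite $\G$-CW-complexes, both invariant under weak equivalence of groupoids (Lemma 2.18 and its corollaries, resting on the section-transport result of Proposition 2.17), and that on a $\G$-cell $U$ with $\G \rtimes U$ weakly equivalent to $G \rtimes M$ each is isomorphic to $K_G^*(M)$ --- for $RK_\G^*$ this uses that $U \times_\pi H$ is locally universal (where Bredon-compatibility enters) and that over $M$ extendable sections are all sections. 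The two theories are then glued by naturality of these cellwise identifications. What the paper's route buys is that one never has to relate the extendability condition on Fredholm maps to the extendability condition on virtual bundles anywhere except on a cell, where both conditions become vacuous after passing to the compact group model.

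That is precisely where your explicit construction has real gaps. For $\Phi$, the assertion that the witness $gf = v\pi_X$ ``translates directly'' into extendability of $[\ker f] - [\mathrm{coker}\,f]$ requires perturbing $f$ (and simultaneously $g$ and $v$) to constant-rank kernel and cokernel while preserving the relation $gf = v\pi_X$; nothing in your sketch addresses why such a compatible perturbation exists over an arbitrary finite $\G$-CW-complex, and the multiplicativity of the index needed to conclude $[\mathrm{ind}\,f] + [\mathrm{ind}\,g] = \pi_X^*[\mathrm{ind}\,v]$ only holds after that perturbation is arranged. Second, you lean on Bredon-compatibility to ``supply the equivariant global sections and extensions over $G_0$'' on all of $X$, but the hypothesis only guarantees that bundles on $\G$-cells are summands of pullbacks from $G_0$; promoting this to the whole complex is not automatic and is exactly what the Mayer--Vietoris induction is for, so you cannot also use it as an input to define $\Psi$ globally. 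If you drop $\Psi$ entirely, verify that $\Phi$ is a natural transformation of cohomology theories compatible with the boundary maps, and check it is an isomorphism on cells, you recover a correct (and arguably more transparent) variant of the paper's argument; as written, the explicit-inverse half is not yet a proof.
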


Choosing all sections of the Fredholm bundle corresponds to choosing all vector bundles in the untwisted case. To make 
these new theories an extension of untwisted $K$-theory, we need to consider extendable sections. Then we can define twisted 
$\G$-equivariant $K$-theory as the group of extendable homotopy classes of extendable sections of a suitable Fredholm bundle, 
that is homotopy classes where the homotopies run over extendable sections. Extending it to all degrees as in \cite{AS2}, we 
obtain a cohomology theory:

\begin{thm}

If $\G $ is a Bredon-compatible finite Lie groupoid, the groups ${^PK_{\G }^n(X)} $ define a $ \Z /2 $-graded cohomology 
theory on the category of finite $\G$-CW-complexes with $\G$-stable projective bundles, which is a module over untwisted
$\G$-equivariant $K$-theory.  

\end{thm}

In the untwisted case, under some conditions, we have a completion theorem that relates the completion of $\G$-equivariant $K$-theory with respect
to the augmentation ideal $I_{\G}$ to the non-equivariant $K$-theory of the Borel construction. The recent completion theorem
for twisted equivariant $K$-theory for actions of compact Lie groups in \cite{La} provides the necessary results to use 
induction over cells. In the twisted case, however, the completion theorem will relate the completion of twisted $\G$-equivariant 
$K$-theory of $X$ with respect to $I_{\G}$ to the twisted $\G$-equivariant $K$-theory of $ X \times _{\pi} E\G$. Our 
generalization of the completion theorem in this context is the following theorem: 

\begin{thm}
Let $\G$ be a Bredon-compatible finite Lie groupoid, $X$ a finite $\G$-CW-complex and $P$ a $\G$-stable projective bundle on $X$. 
Then we have an isomorphism of $K_{\G}^*(G_0)$-modules: 
\[ {^PK_{\G }^n (X)_{I_{\G}}^{\wedge}} \longrightarrow {^{P \times _{\pi} E\G}K_{\G }^n (X \times _{\pi} E\G)} \]
\end{thm}

When $S$ is a Lie group, not necessarily compact, we can define twisted equivariant $K$-theory for proper actions of $S$ using
the groupoid $ S \rtimes \underline{E}S $, where $ \underline{E}S $ is the universal space for proper actions of $S$. While in 
general these actions are not Bredon-compatible, there some interesting cases in which they are. Some examples are given by 
proper actions of discrete groups, pro-discrete groups, almost compact groups and matrix groups. They are discussed in more 
detail in section 5.
\newline

The category of $\G$-orbits behaves similarly to the corresponding category for a compact Lie group. In particular, we are
able to use some of these properties to prove an analogue of Elmendorf's construction \cite{E}. This construction is the
key to the classification of $\G$-stable projective bundles. In fact, we show that isomorphism classes of $\G$-stable projective 
bundles over $X$ are classified by $ H_{\G}^3(X) $. This is done by constructing a particular model for the space that 
represents  $ H_{\G}^3(-) $ which admits a natural $\G$-stable projective bundle on it.

\section{Representable $K$-theory}

\noindent In this paper all groupoids are Lie groupoids.

\begin{defn}

Let $\G $ be a Lie groupoid and $X$ a $\G $-space. A $\G$-Hilbert bundle on $X$ is a $\G $-space $E$ with an equivariant map
$ p : E \longrightarrow X $ which is also a locally trivial Hilbert bundle with a continuous linear $\G$-action.

\end{defn}

\begin{defn}

A universal $\G$-Hilbert bundle on $X$ is a $\G$-Hilbert bundle $E$ such that
for each Hilbert bundle $V$ on $X$ there exists a $\G$-equivariant unitary
embedding $ V \subset E $.

\end{defn}

\begin{defn}

A locally universal $\G$-Hilbert bundle on $X$ is a $\G$-Hilbert bundle $E$ such that
there is a countable open cover $ \{ U_i \} $ of $X$ such that $ E_{|U_i}$ is 
a universal $\G$-Hilbert bundle on $U_i$.

\end{defn}

\begin{defn}

A local quotient groupoid is a groupoid $\G$ such that $  G_0  $ admits
a countable open cover $ \{ U_i \} $ with the property that $ \G \rtimes U_i $
is weakly equivalent to an action groupoid corresponding to the proper action of a compact 
Lie group $G$ on a finite $G$-CW-complex.

\end{defn}

\begin{cor}

A finite Lie groupoid is a local quotient groupoid.

\end{cor}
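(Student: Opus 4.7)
The plan is to combine the linearization (slice) theorem for proper Lie groupoids with the inductive structure coming from the finite $\G$-CW decomposition of $G_0$. First, I would observe that since $\G$ is finite (so $G_0$ carries a finite $\G$-CW structure built from cells of the form $\G \times_{G_x} D^n$ with $G_x$ a compact Lie group), $\G$ is in particular a proper Lie groupoid. The properness is the ingredient that puts us in a position to apply local linearization around each orbit.

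Next, I would invoke the Weinstein--Zung linearization theorem: for each $x \in G_0$ there is a $\G$-invariant open neighborhood $V_x$ of the orbit $\G\cdot x$ such that the restricted groupoid $\G \rtimes V_x$ is Morita equivalent (hence weakly equivalent) to the action groupoid $G_x \ltimes S_x$, where $G_x$ is the (compact) isotropy group at $x$ and $S_x$ is a linear slice, i.e. an invariant open neighborhood of the origin in a finite-dimensional representation of $G_x$. Shrinking $V_x$, I may assume $S_x$ is a relatively compact invariant open ball; such a slice admits an equivariant finite $G_x$-CW structure by Illman's equivariant triangulation theorem for compact Lie group actions on smooth manifolds. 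Thus each point of $G_0$ has an invariant neighborhood of exactly the shape required by the definition of a local quotient groupoid.

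Finally, I would use the finite $\G$-CW hypothesis on $G_0$ to pass from the pointwise statement to a countable (in fact finite) open cover: induct on the number of cells, using at each stage a tube neighborhood $V_x$ of the newly attached orbit cell. Finitely many such $V_x$ cover $G_0$, and each restricted groupoid $\G \rtimes V_x$ is weakly equivalent to the action groupoid of a compact Lie group on a finite $G$-CW-complex, which is precisely the condition in the definition.

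The main obstacle is the slice step: arranging that the linear slice $S_x$ can simultaneously be taken $G_x$-invariant, small enough for the linearization to apply, and to carry a \emph{finite} $G_x$-CW structure compatible with the groupoid action. The first two are handled by the standard linearization theorem for proper groupoids, but the third requires appealing to equivariant triangulation and the relative compactness of the slice; combined with the finiteness of the cell decomposition of $G_0$, this is what upgrades the pointwise local model into the global countable-cover statement demanded by the corollary.
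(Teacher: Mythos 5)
The paper offers no argument for this corollary because, with its definitions, there is essentially nothing to prove: a finite Lie groupoid is one whose object space $G_0$ is a finite $\G$-CW-complex, so $G_0$ is covered by finitely many $\G$-cells $U \times D^m$, and by the very definition of a $\G$-cell the restricted groupoid $\G \rtimes (U \times D^m)$ is weakly equivalent to $G \rtimes (M \times D^m)$ for a compact Lie group $G$ and finite $G$-CW-complex $M$. A finite cover is countable, so the definition of local quotient groupoid is verified directly. Your route through the Weinstein--Zung linearization theorem is a genuinely different and much heavier path, and it is not needed here.

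Beyond being overkill, your argument has concrete gaps. First, you assert that finiteness implies properness of $\G$; that inference is not free and would itself have to be extracted from the weak equivalences defining the cells, at which point you already have the cover you want. Second, the invariant-neighborhood form of linearization requires extra hypotheses ($s$-properness, compactness of the orbit), and Illman's equivariant triangulation requires a smooth action, none of which are part of the standing assumptions --- $\G$-CW-complexes need not be smooth manifolds. Third, and most seriously, your local model is $G_x \ltimes S_x$ with $S_x$ an \emph{open} invariant ball in a representation. An open ball is not a finite $G_x$-CW-complex, and it is not weakly equivalent as a groupoid to one: weak equivalence here is Morita equivalence, not $G$-homotopy equivalence, so you cannot retract $S_x$ to the origin. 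The functor $G_x \ltimes \{0\} \to G_x \ltimes S_x$ is fully faithful but not essentially surjective. So even granting the linearization, the local models you produce do not satisfy the definition of a local quotient groupoid as stated, whereas the cells handed to you by the finite $\G$-CW structure do.
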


\begin{prop}

If $\G$ is a local quotient groupoid, then there exists a locally universal $\G$-Hilbert bundle on $G_0$ that is unique
up to unitary equivalence.

\end{prop}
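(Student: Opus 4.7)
The plan is to construct a locally universal bundle by assembling local models coming from the local quotient structure, and to establish uniqueness via a Kasparov-style absorption argument combined with a patching step over the cover.

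For existence, let $\{U_i\}_{i \in \N}$ be the cover provided by the local quotient hypothesis, so that each $\G \rtimes U_i$ is weakly equivalent to $G \ltimes X$ for a compact Lie group $G$ and a finite $G$-CW-complex $X$. On the compact Lie group side, the trivial bundle $X \times (L^2(G) \otimes \ell^2(\N))$ is a universal $G$-Hilbert bundle on $X$: by Peter--Weyl, $L^2(G) \otimes \ell^2(\N)$ contains every irreducible representation of $G$ with infinite multiplicity, and any $G$-vector bundle on a finite $G$-CW-complex is built cell by cell from pieces $G/H \times D^n$ whose sections embed equivariantly into such a sum. Transporting along the weak equivalence, which induces an equivalence of categories of $\G$-Hilbert bundles, yields a universal $\G$-Hilbert bundle $E_i$ on $U_i$. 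Choose a $\G$-invariant shrinking $\overline{V_i} \subset U_i$, and by means of a $\G$-invariant collar and bump function extend each $E_i$ to a $\G$-Hilbert bundle $\widetilde{E_i}$ on all of $G_0$ with $\widetilde{E_i}|_{V_i} = E_i|_{V_i}$ and $\widetilde{E_i}$ trivial outside $U_i$. The countable direct sum $E = \bigoplus_i \widetilde{E_i}$ is then a $\G$-Hilbert bundle on $G_0$, and its restriction to $V_j$ contains $E_j|_{V_j}$ as an equivariant summand, hence is universal on $V_j$. Since $\{V_j\}$ still covers $G_0$, $E$ is locally universal.

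For uniqueness, suppose $E$ and $E'$ are both locally universal. The key local input is absorption: if $F$ is universal on an open set $U$ and $F'$ is any $\G$-Hilbert bundle on $U$, then $F \oplus F' \cong F$ equivariantly. This is the standard Kasparov--Mingo--Phillips statement, whose proof uses a Kuiper-type contractibility of the unitary group of a stable $\G$-Hilbert bundle. Applied symmetrically on any open set where both restrictions are universal, it produces a local unitary isomorphism $E|_U \cong E'|_U$. A common refinement of the two covers, together with a $\G$-invariant partition of unity subordinate to it, then patches these local isomorphisms into a global one by the familiar inductive procedure: given an isomorphism on $V_1 \cup \cdots \cup V_k$, use contractibility of the unitary group to homotope it together with the chosen local isomorphism on $V_{k+1}$ so that they agree on the overlap, and extend across the union.

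The main obstacle is the extension step of the existence argument: extending each $E_i$ from $U_i$ to all of $G_0$ without breaking equivariance or local triviality requires a careful choice of $\G$-invariant collars and bump functions on $G_0$. These exist because the restricted groupoids in the cover are proper and $G_0$ is paracompact, so $\G$-invariant partitions of unity are available. Once this bookkeeping is carried out, both existence and uniqueness reduce to classical facts about universal Hilbert bundles for compact Lie groups, transported via the local weak equivalences supplied by the local quotient hypothesis.
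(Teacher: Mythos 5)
The paper offers no argument of its own here --- its proof is the single line ``See \cite{FHT}'' --- so there is nothing internal to compare against step by step. Your overall strategy (Peter--Weyl models on the global-quotient pieces, transported along the weak equivalences, plus a Kasparov/Kuiper absorption argument for uniqueness) is the strategy of that reference, and the uniqueness half of your write-up is essentially sound, modulo the standard care needed to turn ``the two local isomorphisms are homotopic through unitaries on the overlap'' into ``they can be made to agree near the overlap'' (one needs a collar and contractibility of the unitary gauge group of a universal bundle as a topological group, not merely weak contractibility).

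The genuine gap is in the existence argument, at the step where you ``by means of a $\G$-invariant collar and bump function extend each $E_i$ to a $\G$-Hilbert bundle $\widetilde{E_i}$ on all of $G_0$ with $\widetilde{E_i}|_{V_i}=E_i|_{V_i}$ and $\widetilde{E_i}$ trivial outside $U_i$.'' A bump function lets you extend sections or operators by zero; it does not let you extend a locally trivial Hilbert bundle. To build $\widetilde{E_i}$ you must glue $E_i$, restricted to some invariant open $V_i'$ with $\overline{V_i}\subset V_i'\subset U_i$, to a bundle defined on $G_0\setminus\overline{V_i}$ along the annulus $V_i'\setminus\overline{V_i}$, and that requires an equivariant unitary isomorphism of $E_i$ with that bundle over the annulus. ``Trivial outside $U_i$'' is not available for this purpose: the trivial bundle with trivial $\G$-action has the wrong isotypical content to match a universal bundle, and the model fibre $L^2(G)\otimes\ell^2(\N)$ only makes sense over $U_i$, where the weak equivalence to $G\ltimes X$ lives --- that compact group does not act outside $U_i$. (A smaller issue: for $\bigoplus_i\widetilde{E_i}$ to be locally trivial the cover must be locally finite, so that near each point all but finitely many summands are honestly trivial on a common neighbourhood.) The standard repair is to abandon extending each $E_i$ separately and instead glue the $E_i$ to one another: over each $U_i\cap U_j$ both restrictions are universal, hence unitarily isomorphic by the local uniqueness you already have; the resulting \v{C}ech cocycle discrepancy on triple overlaps lies in the unitary gauge group of a universal bundle, which is contractible, so the gluing data can be corrected and the $E_i$ assemble into a global bundle that is universal over each $V_j$. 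Alternatively one can simply follow \cite{FHT}. As written, however, the extension step does not go through.
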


\begin{proof}

See \cite{FHT}
\end{proof}

\begin{cor}

If $\G$ is a finite Lie groupoid, then there exists a locally universal $\G$-Hilbert bundle on $G_0$ that is unique up
to unitary equivalence. We denote it by $U(\G)$.

\end{cor}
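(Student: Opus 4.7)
The plan is to deduce this corollary directly by chaining the two immediately preceding results. The earlier corollary states that any finite Lie groupoid $\G$ is a local quotient groupoid, and the proposition then guarantees the existence and uniqueness (up to unitary equivalence) of a locally universal $\G$-Hilbert bundle on $G_0$ for any local quotient groupoid. So the argument is a one-line composition: apply the first corollary to conclude $\G$ is local quotient, then invoke the proposition.

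Concretely, first I would recall that since $\G$ is finite, there is a countable open cover $\{U_i\}$ of $G_0$ such that each restricted action groupoid $\G \rtimes U_i$ is weakly equivalent to the action groupoid of a proper action of a compact Lie group on a finite $G$-CW-complex. This is exactly the defining property of a local quotient groupoid. Next, I would apply the proposition to obtain a locally universal $\G$-Hilbert bundle on $G_0$, which is unique up to unitary equivalence; naming this bundle $U(\G)$ finishes the statement.

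Since the substantive content is cited to \cite{FHT} via the preceding proposition, there is no real obstacle here. The only thing to note is that \emph{uniqueness up to unitary equivalence} is inherited verbatim from the proposition, so no additional argument (e.g.\ gluing or an Eilenberg swindle) needs to be reproduced in this corollary. The notation $U(\G)$ is then well-defined up to unitary equivalence, consistent with its later use in the definition of $\G$-stable projective bundles and stable representations.
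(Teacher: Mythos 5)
Your proposal is correct and matches the paper's (implicit) argument exactly: the corollary is obtained by composing the preceding corollary (a finite Lie groupoid is a local quotient groupoid) with the proposition guaranteeing existence and uniqueness of a locally universal Hilbert bundle for local quotient groupoids, the substantive content being delegated to \cite{FHT}. Nothing further is needed.
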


\begin{prop}

Suppose that $ F : \G \longrightarrow \H $ is a local equivalence. Then the pullback functor
\[ F^* : \{ \text{Locally universal $\H$-Hilbert bundles on } H_0 \} \rightarrow \{ \text{Locally universal $\G$-Hilbert bundles on } G_0 \} \]
is an equivalence of categories.

\end{prop}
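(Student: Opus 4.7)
The plan is to reduce the statement to the existence-and-uniqueness result of the previous proposition together with the Morita-type fact that a local equivalence of groupoids should induce an equivalence between the corresponding categories of equivariant Hilbert bundles on object spaces. First I would pick a countable open cover $\{V_j\}$ of $H_0$ such that each restricted functor $F: \G \rtimes F_0^{-1}(V_j) \to \H \rtimes V_j$ is a weak equivalence of groupoids; writing $U_j := F_0^{-1}(V_j)$, this cover can be arranged to refine the trivializing cover witnessing local universality of a chosen locally universal $\H$-Hilbert bundle $U(\H)$.

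The first key step is to show that $F^*$ preserves local universality. Given a locally universal $\H$-Hilbert bundle $E$ on $H_0$ and any $\G$-Hilbert bundle $V$ on some $U_j$, the Morita-equivalence fact identifies $V$ with the pullback of some $\H$-Hilbert bundle $W$ on $V_j$; an $\H$-equivariant unitary embedding $W \hookrightarrow E|_{V_j}$ (supplied by universality) pulls back to a $\G$-equivariant unitary embedding $V \hookrightarrow F^*E|_{U_j}$, proving local universality of $F^*E$. Essential surjectivity of $F^*$ then follows from the previous proposition: any locally universal $\G$-Hilbert bundle on $G_0$ is unitarily equivalent to $F^*U(\H)$ for the chosen $U(\H)$ (noting that $\H$ is itself a local quotient groupoid, so that $U(\H)$ exists).

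For fully-faithfulness, I would again use the Morita-equivalence fact to identify morphisms of $\G$-Hilbert bundles over $U_j$ with morphisms of the corresponding $\H$-Hilbert bundles over $V_j$. A morphism $\phi: F^*E_1 \to F^*E_2$ between pullbacks of locally universal $\H$-bundles restricts to $\phi_j$ over each $U_j$, which descends to a unique morphism $\tilde\phi_j$ over $V_j$; the uniqueness of local descents on double overlaps forces the $\tilde\phi_j$ to agree on $V_j \cap V_k$, and they patch to a unique $\tilde\phi$ with $F^*\tilde\phi = \phi$.

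The main obstacle is establishing the Morita-type statement underlying both steps: that pullback along a weak equivalence of groupoids gives an equivalence between categories of equivariant Hilbert bundles over the object spaces. This is standard for finite-dimensional equivariant representations, and the extension to the infinite-dimensional Hilbert bundle setting should proceed by essentially the same descent argument. Once this ingredient is in hand, the rest of the proof is essentially formal bookkeeping with the previous uniqueness proposition, so I would either cite such a descent result from the groupoid literature (e.g.\ \cite{FHT}) or spell out the gluing of local unitary embeddings explicitly.
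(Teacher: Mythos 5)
The paper does not actually prove this proposition: its entire proof is a citation to \cite{FHT}, so there is no in-text argument to compare yours against. Your sketch is a reasonable reconstruction of what that citation is standing in for, and its skeleton is sound: reduce to the Morita-type statement that pullback along a local equivalence is an equivalence between the full categories of equivariant Hilbert bundles, then check that local universality is preserved in both directions. Two remarks. First, once you grant the unrestricted Morita equivalence, full faithfulness of its restriction to the full subcategories of locally universal bundles is automatic, so your descent-and-patching argument for morphisms is superfluous; the only genuine extra work is showing that $F^*$ and a quasi-inverse both carry locally universal bundles to locally universal bundles, which also yields essential surjectivity directly without detouring through the uniqueness proposition (a detour that quietly imports the hypothesis that $\G$ and $\H$ are local quotient groupoids, which is not in the statement). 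Second, your opening step --- choosing a cover $\{V_j\}$ of $H_0$ so that each $\G \rtimes F_0^{-1}(V_j) \to \H \rtimes V_j$ is again a weak equivalence --- is asserted rather than justified; essential surjectivity of a restricted functor can fail for a badly chosen open set, so the existence of such a cover is part of what the definition of local equivalence must be unwound to supply. Neither point is fatal, but both belong in a complete write-up, and the central descent lemma you defer is precisely the content the paper outsources to \cite{FHT}, so in the end your proposal and the paper lean on the same external input.
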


\begin{proof}

See \cite{FHT}
\end{proof}

Let $H$ be a stable representation of $\G$, that is, a $\G$-Hilbert bundle on $G_0$ such that $ H \oplus U(\G) = H $. Consider
the associated bundle of Fredholm operators $Fred(H)$ on $G_0$, and the subbundle $Fred'(H)$ of operators $A$ for which the
action $g \to gAg^{-1} $ is continuous. See \cite{EM} for more details on the correct topology for this space.

\begin{defn}

We say that a $\G$-equivariant map $f: X \to Fred'(H) $ is extendable if there is another $\G$-equivariant map 
$g: X \to Fred'(H) $ such that $gf = v \pi _X $ for some section $v$ of $Fred'(H) \to G_0 $, where $ \pi _X : X \to G_0 $
is the anchor map.

\end{defn}

\begin{defn}

We say that a homotopy $ H : X \times I \to Fred'(H) $ of $\G$-equivariant maps is extendable if each $H_t$ is an
extendable $\G$-equivariant map $ X \to Fred'(H) $.

\end{defn}

\begin{defn}

Let $X$ be a $\G$-space, $H$ a stable representation of $\G$ and $ n \geq 0 $. Define the $\G$-equivariant representable $K$-theory groups of $X$ to be 
\[ RK_{\G}^{-n}(X) = [ X , \Omega ^n Fred'(H)]_{\G}^{ext} \]
where this notation denotes the extendable homotopy classes of extendable $\G$-maps. For $\G$-pairs $(X,A)$, define
\[ RK^{-n}_{\G } (X,A) = Ker [ RK^{-n}_{\G } ( X \cup _A X) \stackrel{j^*_2}{\longrightarrow} RK^{-n}_{\G } (X) ] \] 
where $ j_2 : X \rightarrow X \cup _A X $ is one of the maps from $ X$ to the pushout.

\end{defn}

\noindent We could have defined the extendable $K$-groups as in \cite{LO}:
\[ RK^{-n}_{\G } (X) = Ker [ RK_{\G }(X \times S^n) \stackrel{i*}{\longrightarrow} RK_{\G }(X) ] \]
where $ RK_{\G }(X) = [ X , Fred'(H)]_{\G}^{ext} $ and $ i : X \rightarrow X \times S^n $ is the inclusion given by fixing 
a point in $ S^n $. Both definitions are clearly equivalent.

We will now prove this defines a cohomology theory on the category of $\G$-spaces. Since the definition is given by 
homotopy classes of maps, the following follows from the definition.

\begin{cor}

If $f_0$, $f_1 : (X,A) \longrightarrow (Y,B) $ are $\G$-homotopic $\G$-maps between $\G$-pairs, then
\[  f_0^*=f_1^* : RK_{\G}^{-n}(Y,B) \longrightarrow RK_{\G}^{-n}(X,A)   \]
for all $n \geq 0 $.

\end{cor}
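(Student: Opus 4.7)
The plan is to reduce to the absolute case and then check that extendability of maps and of homotopies is preserved under the relevant compositions. The proof is essentially formal: ordinary homotopy classes are manifestly homotopy-invariant, and the only substantive issue is whether the restriction to \emph{extendable} classes behaves the same way.

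First I would treat the absolute case. Let $F : X \times I \to Y$ be a $\G$-equivariant homotopy from $f_0$ to $f_1$, and let $g : Y \to \Omega^n Fred'(H)$ be an extendable $\G$-map representing a class in $RK^{-n}_\G(Y)$, with witness $g' : Y \to \Omega^n Fred'(H)$ satisfying $g' g = v \pi_Y$ for some section $v$. Because $f_t$ is $\G$-equivariant, it satisfies $\pi_Y \circ f_t = \pi_X$, and therefore $(g' \circ f_t)(g \circ f_t) = (g' g) \circ f_t = v \pi_Y f_t = v \pi_X$. Thus $g \circ f_t$ is extendable with witness $g' \circ f_t$ for every $t \in I$; hence $g \circ F$ is an extendable $\G$-homotopy between $g f_0$ and $g f_1$, which yields $f_0^*[g] = f_1^*[g]$ in $RK^{-n}_\G(X)$. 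The loop-space version $\Omega^n Fred'(H)$ introduces no new difficulty, since the extendability condition is imposed pointwise in the loop parameter.

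For the relative case, a $\G$-homotopy of pairs $F : (X,A) \times I \to (Y,B)$ descends to a $\G$-homotopy $\bar F : (X \cup_A X) \times I \to Y \cup_B Y$ between the maps $\bar f_i$ induced by $f_i$ on the pushouts. The absolute case gives $\bar f_0^* = \bar f_1^*$ on $RK^{-n}_\G(Y \cup_B Y)$. Naturality of the inclusion $j_2$ yields $j_2^* \circ \bar f_i^* = f_i^* \circ j_2^*$, so each $\bar f_i^*$ sends the kernel $RK^{-n}_\G(Y,B)$ into the kernel $RK^{-n}_\G(X,A)$, and the equality $\bar f_0^* = \bar f_1^*$ restricts to the desired equality on the relative groups.

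The main obstacle, though a very mild one, is precisely the compatibility of extendability with precomposition; this is handled in the first paragraph by exhibiting $g' \circ f_t$ as a witness for $g \circ f_t$, using only that $\G$-equivariant maps respect the anchor. Everything else is definition-chasing, which is why the author labels the result a corollary.
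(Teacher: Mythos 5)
Your proof is correct and is exactly the definition-chase the paper has in mind: the author gives no argument at all, simply asserting that the corollary ``follows from the definition,'' and the one point that genuinely needs checking --- that $g'\circ f_t$ witnesses extendability of $g\circ f_t$ because equivariance gives $\pi_Y f_t=\pi_X$ --- is precisely what you supply. The reduction of the relative case to the absolute case via the pushout $X\cup_A X$ and naturality of $j_2$ is likewise the intended (and correct) reading of the paper's definition of the relative groups.
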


The following lemma follows easily from the definitions:

\begin{lemma} 

Let $(X,A)$ be a $\G$-pair. Suppose that $\displaystyle X = \mathop{\coprod } _{i \in I} X_i $, the disjoint union of open $\G$-invariant 
subspaces $X_i$ and set $ A_i = A \cap X_i $. Then there is a natural isomorphism
\[  RK_{\G}^{-n}(X,A) \longrightarrow \prod _{i \in I} RK_{\G}^{-n}(X_i,A_i)  \]

\end{lemma}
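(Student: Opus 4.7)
The plan is to reduce to the absolute case ($A = \emptyset$) and then deduce the relative case from the kernel definition of $RK^{-n}_{\G}(X,A)$. The map in the statement is the product of the restriction maps to each open invariant piece.

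In the absolute case, since the $X_i$ are pairwise disjoint, open, and $\G$-invariant, a $\G$-equivariant continuous map $f : X \to \Omega^n Fred'(H)$ is the same data as a family of $\G$-equivariant continuous maps $f_i : X_i \to \Omega^n Fred'(H)$. The same holds for $\G$-equivariant homotopies, since $X \times I = \coprod_i X_i \times I$ is again a disjoint union of open $\G$-invariant subspaces. The essential step is to check that this decomposition is compatible with extendability of both maps and homotopies. One direction is immediate: if $f$ is extendable via $(g,v)$, meaning $gf = v\pi_X$, then each restriction $f_i$ is extendable via $(g|_{X_i}, v)$, and the analogous restriction works for extendable homotopies.

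For the other direction, given extendability witnesses $(g_i, v_i)$ for each $f_i$, one must assemble them into a single global pair $(g,v)$. The subtlety is that the sections $v_i$ of $Fred'(H) \to G_0$ may disagree on the overlaps of the subsets $\pi_X(X_i) \subset G_0$, so one cannot glue them naively. The remedy is to exploit the stability $H \oplus U(\G) = H$: using a fixed isomorphism $H \oplus H \cong H$, one can take direct sums within $Fred'(H)$ and replace each $(g_i, v_i)$ by an equivalent extendability datum whose section is a fixed universal section $v$, after which the modified $g_i$ assemble into a global $g = \coprod_i g_i$ defined piecewise on the disjoint open invariant cover. Combined with the previous paragraph, this shows the restriction map descends to an isomorphism on extendable homotopy classes.

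Once the absolute statement is established, the relative case follows by applying it to both $X$ and $X \cup_A X = \coprod_i (X_i \cup_{A_i} X_i)$, which is again a disjoint union of open $\G$-invariant subspaces. One obtains a commutative square of restriction isomorphisms, and the conclusion follows because the kernel of a product of maps is the product of the kernels.

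The main obstacle is the extendability compatibility in the assembly direction. The set-theoretic decomposition of maps and homotopies is immediate from the disjoint open invariant structure, but producing a uniform witnessing section $v$ across the pieces is where the stability hypothesis on $H$ and the direct sum structure on $Fred'(H)$ become essential.
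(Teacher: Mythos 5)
The paper does not actually prove this lemma: it is stated with the remark that it ``follows easily from the definitions,'' so the intended argument is precisely the naive one you give first. Over a disjoint union of open $\G$-invariant subspaces, $\G$-maps and $\G$-homotopies into $\Omega^n Fred'(H)$ are the same as families of such, and the relative case reduces to the absolute one through $X\cup_A X=\coprod_i (X_i\cup_{A_i}X_i)$ together with the fact that the kernel of a product of homomorphisms is the product of the kernels. That part of your proposal is fine and matches the paper's (implicit) reasoning. You go further and isolate the one genuinely nontrivial point, which the paper passes over in silence: the extendability witnesses $(g_i,v_i)$ involve globally defined sections $v_i$ of $Fred'(H)\to G_0$, and since the images $\pi_X(X_i)$ may overlap in $G_0$, the $v_i$ cannot simply be glued. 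Identifying this as the crux is the right instinct.

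However, your proposed repair is only a sketch, and as stated it does not go through. Since $f$ is given, the only freedom available is to replace $g_i$ by $w_ig_i$, which changes the witnessing section from $v_i$ to $w_iv_i$; forcing all the $w_iv_i$ to coincide with a single universal $v$ would require inverting the $v_i$, which are merely Fredholm. The direct-sum device does not obviously help either: an identification $H\cong H\oplus H$ (or $H\cong H^{\oplus I}$) conjugates $f$ as well as $g$, so after the identification you are exhibiting a witness for a modified map rather than for the original $f$; and for infinite $I$ a direct sum $\bigoplus_i v_i$ of Fredholm sections need not be Fredholm. To close the gap one would have to exploit the homotopy freedom the statement actually allows --- for instance, show that each $f_i$ is extendably homotopic to a representative whose witnessing section is a fixed standard one before assembling, or argue that extendability need only be tested over $\pi_X(X_i)$ --- and none of that is in your write-up. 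So the step you correctly flagged as the main obstacle remains unproved.
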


\begin{lemma}

Let $\phi : X \longrightarrow Y $ be a $\G$-equivariant map, $H$ be a stable representation of $\G$ and let 
$s: X \longrightarrow Fred'(H) $ be a $\G$-extendable map. Then, there is a $\G$-extendable map  
$t: Y \longrightarrow Fred'(H) $ such that $ s's=t \phi $ for some $\G$-extendable map $s: X \longrightarrow Fred'(H) $.

\end{lemma}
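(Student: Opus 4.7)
The plan is to transport the witness data for the extendability of $s$ from $X$ to $Y$ via the anchor map. By hypothesis there exist a $\G$-equivariant map $s': X \to Fred'(H)$ and a section $v$ of $Fred'(H) \to G_0$ with $s's = v \pi_X$. Since $\phi$ is $\G$-equivariant, it commutes with the anchor maps, i.e.\ $\pi_Y \circ \phi = \pi_X$. This suggests defining
\[ t := v \circ \pi_Y : Y \longrightarrow Fred'(H), \]
which is $\G$-equivariant as a composition of a $\G$-equivariant section with an anchor. The desired identity is then immediate:
\[ t \phi = v \pi_Y \phi = v \pi_X = s's. \]

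Next I would verify that $t$ is itself extendable. Because $t$ already factors through a section over $G_0$ by construction, the witness can be taken to be the identity section pulled back along $\pi_Y$. Concretely, let $\mathbf{1} : G_0 \to Fred'(H)$ denote the fiberwise identity operator, which is tautologically $\G$-equivariant; then $\mathbf{1}\pi_Y : Y \to Fred'(H)$ is a $\G$-equivariant map satisfying $(\mathbf{1}\pi_Y) \cdot t = t = v\pi_Y$, where $v$ is a section of $Fred'(H) \to G_0$. This exhibits $t$ as extendable.

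The main obstacle is ensuring that $s'$ can be chosen extendable, as the witness produced directly from the definition of extendability of $s$ is not a priori extendable itself. To overcome this, I would exploit the stability hypothesis $H \oplus U(\G) = H$, which allows a direct-sum enlargement of the system without leaving $Fred'(H)$. Using this room, one can replace $s'$ by a more symmetric parametrix-type map for which not only $s's$ but also $ss'$ factors through a section of $Fred'(H)$ over $G_0$; the second factorization then witnesses extendability of $s'$ (with $s$ itself playing the role of the auxiliary map). The technical work concentrates on carrying out this modification while preserving $\G$-equivariance and the operator-space continuity required to stay inside $Fred'(H)$, using the representation theory of the local models supplied by the Bredon-compatibility assumptions invoked throughout the section.
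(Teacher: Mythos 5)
Your construction $t = v\pi_Y$, using $\pi_X = \pi_Y\phi$ and witnessing the extendability of $t$ by the fiberwise identity section, is exactly the paper's proof, so the core of the proposal is correct and takes the same route. The worry in your final paragraph --- that the witness $s'$ supplied by the definition of extendability of $s$ is only $\G$-equivariant, not itself extendable --- is a genuine point, but the paper does not address it either; it simply asserts that $s'$ may be taken $\G$-extendable, so your (unfinished) parametrix sketch is extra diligence rather than a divergence in method.
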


\begin{proof}
Since $s$ is extendable, there is a $\G$-extendable map $s':X \longrightarrow Fred'(H) $ such that $ s's= v = \pi _X = 
v \pi _Y \phi $ for some section $ v : G_0 \longrightarrow Fred'(H)$. Choose $ t = v \pi _Y $. This is a $\G$-extendable
map for it is the pullback of a $\G$-extendable map and we have $ s's = t \phi $.
\end{proof}

\begin{lemma}

Let
\diagram
A & \rTo^{i_1} & X_1 \\
\dTo_{i_2} & & \dTo_{j_1} \\
X_2 & \rTo^{j_2} & X \\
\enddiagram
be a pushout square of $\G$-spaces, $H$ a stable representation of $\G$ and $i_1$ a cofibration. Let 
$s_k : X_k \longrightarrow Fred'(H)$ be $\G$-extendable maps for $ k = 1,2 $ such that $ s_1 i_1 $ and $ s_2 i_2 $ are 
$\G$-extendable homotopic maps from $ A $ to $Fred'(H) $. Then, there is a $\G$-extendable map 
$ t : X \longrightarrow Fred'(H) $ such that $ t j_k $ is $\G$-extendable homotopic to $ s_k $ for $ k = 1,2 $.

\end{lemma}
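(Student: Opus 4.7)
The plan is to apply the homotopy extension property of the cofibration $i_1 : A \hookrightarrow X_1$ to slide $s_1$ until it agrees with $s_2$ on the overlap, and then to invoke the universal property of the pushout. Let $h : A \times I \to Fred'(H)$ be a $\G$-extendable homotopy with $h_0 = s_1 i_1$ and $h_1 = s_2 i_2$. Since $i_1$ is a cofibration, HEP produces a homotopy $F : X_1 \times I \to Fred'(H)$ with $F_0 = s_1$ and $F|_{A \times I} = h$. Its endpoint satisfies $F_1 i_1 = h_1 = s_2 i_2$, so the pushout property yields a unique $\G$-map $t : X \to Fred'(H)$ with $t j_1 = F_1$ and $t j_2 = s_2$. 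Then $F$ is a candidate homotopy from $s_1$ to $t j_1$, and $t j_2 = s_2$ is connected to itself by the constant homotopy; once the extendability claims are verified, the lemma follows.

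The real content is in two extendability checks: one for the homotopy $F$, and one for $t$ itself. For $F$, the plan is to run HEP on the extendability data as well. For each $\tau \in I$, extendability of $h_\tau$ supplies a witness $h_\tau' : A \to Fred'(H)$ and a section $v_\tau : G_0 \to Fred'(H)$ with $h_\tau' h_\tau = v_\tau \pi_A$, while $s_1$ supplies such data on $X_1$. Extending the homotopy of these witnesses along the cofibration $i_1$ produces $\G$-maps $F_\tau'$ on $X_1$ and sections $w_\tau$ on $G_0$ satisfying $F_\tau' F_\tau = w_\tau \pi_{X_1}$, which exhibits each $F_\tau$ as extendable. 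For $t$, the witnesses $F_1'$ on $X_1$ and $s_2'$ on $X_2$ must be arranged to agree on $A$, so that they glue through the pushout to a $\G$-map $t' : X \to Fred'(H)$ satisfying $t' t = v \pi_X$ for an appropriate section $v : G_0 \to Fred'(H)$.

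The hard part is making the two packages of extendability data compatible on the overlap, since the sections provided by extendability of $s_1$ and $s_2$ are a priori different, so the naive gluing need not yield a section-factored composition. The intended remedy is to exploit the stability hypothesis $H \oplus U(\G) = H$ and the resulting direct-sum operations on $Fred'(H)$: by direct-summing each witness with a suitable pullback of a section on $G_0$, and adjusting by HEP where necessary, both witnesses can be arranged to involve the same section on $A$, after which the pushout construction produces the required extendable $t'$, and hence the extendability of $t$.
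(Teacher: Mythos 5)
Your skeleton agrees with the paper's: extend the connecting homotopy over $X_1$ by the homotopy extension property, glue its endpoint with $s_2$ through the pushout to get $t$, and then reduce everything to extendability of the homotopy and of $t$. You also correctly isolate the crux, namely that the extendability witnesses for the two pieces have no reason to agree over $A$. But your resolution of that crux has two genuine gaps. First, an extendable homotopy only provides, for each separate $\tau$, \emph{some} witness $h_\tau'$ and \emph{some} section $v_\tau$; nothing makes $\tau \mapsto h_\tau'$ a continuous homotopy, so ``extending the homotopy of these witnesses along the cofibration'' is not yet licensed. The paper avoids this by first invoking its Lemma 2.14 (the witness composite of an extendable map factors as a section pulled back along the anchor map) to produce a witness homotopy $F'$ on $A\times I$ together with a homotopy of sections $\bar F$ on $G_0\times I$ satisfying $F'F=\bar F\circ(\pi_A\times \mathrm{id})$, and only then extends both $F$ and $F'$ by HEP, arranging $G'G=\bar F\circ(\pi_1\times \mathrm{id})$ --- which is verbatim the statement that each $G_\tau$ is extendable, with a witness section living on $G_0$ and hence globally defined from the start.

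Second, your proposed fix for matching the witnesses on $A$ --- stabilizing by direct sums using $H\oplus U(\G)=H$ --- is not carried out, and it is not the mechanism that works: direct-summing $F_1'$ and $s_2'$ with pulled-back sections modifies them but gives no reason for the resulting maps $A\to Fred'(H)$ to become \emph{equal}, and literal equality on $A$ (not agreement up to homotopy or up to stabilization) is what the pushout demands in order to glue a single $t'$ with $t't=v\pi_X$; the definition of extendability does not permit replacing $t't$ by a stabilized composite. The paper's mechanism is different and uses data you already have in hand: since $F_1=s_2i_2$, the witness homotopy $F'$ can be normalized by a correction pulled back from $G_0$ so that $F_1'=s_2'i_2$ exactly; HEP then transports this equality to the endpoint $G_1'$ on $X_1$, so $G_1'$ and $s_2'$ glue on the nose, and the associated sections $v_1=\bar F_1$ and $v_2$ agree where the anchor images overlap because both compute the same composite over $A$. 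Without some such normalization step your construction of $t'$ does not go through, so as written the proof of extendability of $t$ is incomplete.
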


\begin{proof}

Let $ F : A \times I \longrightarrow Fred'(H) $ be a $\G$-extendable homotopy with $ F_0 = s_1 i_1 $ and $ F_1 = s_2 i_2 $. 
There is $ v_2 : G_0 \longrightarrow Fred'(H) $ such that $ s_2's_2 = v \pi _2 $ for some $ s_2' : X_2 \longrightarrow Fred'(H) $. 
By the previous lemma, there is a $\G$-extendable homotopy $ \bar{F} : G_0 \times I \longrightarrow Fred'(H) $ such that
$ F'F = \bar{F} \circ (\pi _A \times id) $ for some $ F' : A \times I \longrightarrow Fred'(H) $. We can also make it
satisfy $F'_1 = s'_2 i_2 $ by multiplying by a convenient constant homotopy for $G_0$.
 
Now, since $ A \times I \stackrel{i_1 \times id}{\longrightarrow} X_1 \times I $ is a $\G$-equivariant cofibration, there are 
$ G,G' : X_1 \times I \longrightarrow Fred'(H) $ that extend $ F $ and $ F' $ respectively. Therefore $ G'G $ must be an 
extension of $\bar{F} \circ (\pi _A \times id) $ to $ X_1 \times I $. In fact, by the previous lemma, we can choose $G$ and 
$G'$ so that $ G'G = \bar{F} (\pi _1 \times id) $. Therefore $G$ is a $\G$-extendable homotopy.

Let $ G_0 = s_1 $ and $ G_1 = \tilde{s}_1 $. The extendable $\G$-homotopy classes of these two maps are equal, 
and $ \tilde{s}_1 i_1 = G_1 i_1 = F_1 = s_2 i_2 $. So we can easily extend this to a map $ t : X \longrightarrow Fred'(H) $ 
such that $ t j_1 = \tilde{s}_1 $ and $ t j_2 = s_2 $. Therefore $ t j_k $ is $\G$-extendable homotopic to $s_k$ for $k=1,2$. 
In fact, it is given by:
\[ t(x) = \left \{ \begin{array}{cc}
\tilde{s}_1(x_1) & \text{if $x = j_1(x_1)$} \\
s_2(x_2) & \text{if $x = j_2(x_2)$} \end{array} \right. \]
We only need to prove that $t$ is extendable. Let $\tilde{s_1}' = G'_1 $, $s_2' i_2 = F'_1 $. We have
$ \tilde{s_1}' i_1 = G'_1 i_1 = F'_1 = s_2' i_2 $. Consider
\[ t'(x) = \left \{ \begin{array}{cc}
\tilde{s_1}'(x_1) & \text{if $x = j_1(x_1)$} \\
s_2'(x_2) & \text{if $x = j_2(x_2)$} \end{array} \right. \]
Let $ \bar{F}_1 = v_1 $. Then we have $ \tilde{s_1}' \tilde{s}_1 = v_1 \pi _1 $. Now consider the map:
\[ v(x) = \left \{ \begin{array}{cc}
v_1(\pi_1 (x_1)) & \text{if $x = \pi_1(x_1)$} \\
v_2(\pi_2 (x_2)) & \text{if $x = \pi_2(x_2)$} \end{array} \right. \]
This map is well defined. If $ \pi_1(x_1) = \pi_2(x_2) $, then $ x_1 = i_1 a $ and $ x_2 = i_2 a $, and
$ v_1(\pi _1 (x_1)) = v_1 \pi _A a = \bar{F} _1 \pi _A a = F_1 \pi _A a = F_1 \pi _2 (x_2) = v_2(\pi _2 (x_2)) $. It is a routine check that 
$ t't = v \pi _X $, thus $ t $ is extendable
\end{proof}

\begin{lemma}

Let
\diagram
A & \rTo^{i_1} & X_1 \\
\dTo_{i_2} & & \dTo_{j_1} \\
X_2 & \rTo^{j_2} & X \\
\enddiagram
be a pushout square of $\G$-spaces and $i_1$ a cofibration. Then there is a natural exact sequence, infinite to the left
\[  \ldots \stackrel{d^{-n-1}}{\longrightarrow} RK_{\G}^{-n}(X) \stackrel{j_1^* \oplus j_2^*}{\longrightarrow} RK_{\G}^{-n}(X_1) \oplus
    RK_{\G}^{-n}(X_2) \stackrel{i_1^*-i_2^*}{\longrightarrow} RK_{\G}^{-n}(A) \stackrel{d^{-n}}{\longrightarrow} \ldots \]
\[  \ldots \longrightarrow RK_{\G}^{-1}(A) \stackrel{d^{-1}}{\longrightarrow} RK_{\G}^0(X) \stackrel{j_1^* \oplus j_2^*}{\longrightarrow} 
    RK_{\G}^0(X_1) \oplus RK_{\G}^0(X_2) \stackrel{i_1^*-i_2^*}{\longrightarrow} RK_{\G}^0(A) \] 

\end{lemma}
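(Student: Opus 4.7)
The plan is to follow the classical Puppe/cofiber-sequence argument for a representable cohomology theory, adapted to the extendable setting. The key input at the level of sets is supplied by the gluing lemma just proved; the infinite-to-the-left extension comes automatically from the iterated loop-space structure of $\Omega^n Fred'(H)$.

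First I would verify exactness at $RK_{\G}^{-n}(X_1) \oplus RK_{\G}^{-n}(X_2)$. A class $(s_1, s_2)$ in the kernel of $i_1^* - i_2^*$ is represented by extendable maps $s_k : X_k \to \Omega^n Fred'(H)$ with $s_1 i_1$ and $s_2 i_2$ extendably homotopic. Applying the previous gluing lemma verbatim, with $\Omega^n Fred'(H)$ in place of $Fred'(H)$, produces an extendable map $t : X \to \Omega^n Fred'(H)$ with $t j_k$ extendably homotopic to $s_k$ for $k = 1, 2$, which is the required preimage.

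Next I would build the connecting homomorphism $d^{-n}$. Since $i_1$ is a cofibration, so is $j_2 : X_2 \to X$, and one has a homotopy equivalence $X/X_2 \simeq X_1/A$. For $[\alpha] \in RK_{\G}^{-n}(A)$, represented by $\alpha : A \to \Omega^n Fred'(H)$, I would use the cofibration $i_1$ to extend along $A \hookrightarrow X_1 \cup_A CA$, combine with the identification $X/X_2 \simeq X_1/A$, and apply the loop-space adjunction $\Omega^n Fred'(H) = \Omega(\Omega^{n-1} Fred'(H))$ to produce a map $X \to \Omega^{n-1} Fred'(H)$, i.e.\ a class in $RK_{\G}^{-n+1}(X)$. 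The earlier pullback lemma (the one producing $t = v \pi_Y$) is the tool that lets one simultaneously extend both the map and its extendability witness through a $\G$-cofibration.

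Exactness at $RK_{\G}^{-n}(A)$ and $RK_{\G}^{-n+1}(X)$ follows by the standard Puppe-sequence pattern: a class factoring through $j_1$ extends to a map on the cone over $A$ inside $X_1$, hence its image under $d^{-n}$ is extendably nullhomotopic, and conversely a class killed by $d^{-n}$ admits a nullhomotopy through the cone which, glued with an arbitrary extendable lift on $X_2$, provides compatible lifts to $X_1$ and $X_2$. Naturality in the pushout square and the infinite extension to lower degrees are immediate from functoriality and from iterating $\Omega$. The main obstacle will be propagating extendability through every step: the condition is not preserved under arbitrary pullbacks, homotopies, or cofibration extensions without specific coherent choices of witness map $s'$ and section $v$ of $Fred'(H) \to G_0$. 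In particular, showing that $d^{-n}$ is well-defined on extendable homotopy classes, independently of the chosen extension of $\alpha$ through $i_1$, will be the most delicate piece of bookkeeping and will require repeated application of the two earlier lemmas.
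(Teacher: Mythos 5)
Your plan follows essentially the route the paper itself takes, except that the paper's entire proof is the one-line citation ``It is a consequence of the two previous lemmas, the results in [B] and the proof of lemma 3.8 in [LO]'' --- i.e.\ it feeds the gluing lemma and the homotopy invariance into Brown's formal machinery (as implemented in L\"uck--Oliver) to produce the long exact sequence, rather than constructing the connecting map and checking exactness by hand. Your use of the gluing lemma to show that the kernel of $i_1^*-i_2^*$ lies in the image of $j_1^*\oplus j_2^*$ is exactly the role that lemma plays in the paper, and your identification of the delicate point --- carrying the extendability witness $s'$ and the section $v$ of $Fred'(H)\to G_0$ through every construction --- is precisely what the two preceding lemmas are designed to handle. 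The trade-off is that the Brown-functor route gets the connecting homomorphism and its well-definedness for free from the axioms, whereas your explicit Puppe assembly has to re-verify them in the extendable category.

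There is one concrete step in your sketch that does not work as written. You build $d^{-n}$ using the quotients $X/X_2\simeq X_1/A$ and the cone $X_1\cup_A CA$. In the category of $\G$-spaces every object carries an anchor map to $G_0$, and collapsing $X_2$ (or the top of a cone) to a single point destroys that structure, so $X/X_2$ and $CA$ are not $\G$-spaces in the sense used here. This is exactly why the paper defines the relative groups via the double mapping cylinder, $RK^{-n}_{\G}(X,A)=\mathrm{Ker}\,[\,RK^{-n}_{\G}(X\cup_A X)\to RK^{-n}_{\G}(X)\,]$, instead of via quotients; the fix is to run your cofiber argument with fiberwise cones over $G_0$ (collapsing onto $G_0$ rather than onto a point) or, equivalently, to phrase the connecting map through the relative groups of Definition~2.13 and the excision isomorphism $RK^{-n}_{\G}(X,X_2)\cong RK^{-n}_{\G}(X_1,A)$. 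With that substitution the rest of your bookkeeping goes through, using the pullback lemma exactly as you propose to extend the extendability witnesses across the $\G$-cofibrations.
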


\begin{proof}
It is a consequence of the two previous lemmas, the results in \cite{B} and the proof of lemma 3.8 in \cite{LO}.
\end{proof}

For any stable representation $H$ of $\G$ there is a $\G$-map $ \Omega ^n Fred'(H) \to \Omega ^{n+2} Fred'(H) $, which therefore
induces a Bott map $ b(X) : RK_{\G}^{-n}(X) \rightarrow RK_{\G}^{-n-2}(X) $. By the definition of the relative groups, we also
have Bott maps $ b(X,A) : RK_{\G}^{-n}(X,A) \rightarrow RK_{\G}^{-n-2}(X,A) $. We will prove that these maps are isomorphisms 
for finite $\G$-CW-complexes.

\begin{prop}

Suppose that $ F : \G \longrightarrow \H $ is a local equivalence. Then the pullback functor induces an homeomorphism:
\[ F^* : \{ \text{Sections of a fiber bundle on } \H \} \longrightarrow \{ \text{Sections of the pullback fiber bundle on } \G \} \]
\end{prop}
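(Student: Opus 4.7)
The plan is to exhibit $F^*$, which sends an $\H$-equivariant section $s : H_0 \to E$ of a bundle $E \to H_0$ to $s \circ F_0 : G_0 \to F^*E$, as a continuous bijection with continuous inverse. Continuity of $F^*$ is immediate from the compact-open topology on the section spaces together with continuity of $F_0$: $\H$-equivariance of $s$ passes to $\G$-equivariance of the pullback because the $\G$-action on $F^*E$ is defined by transport along $F$ from the $\H$-action on $E$.

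Injectivity follows from the essentially surjective character of a local equivalence: since $F_0(G_0)$ meets every $\H$-orbit on $H_0$, two $\H$-equivariant sections of $E$ that agree after pullback along $F_0$ agree on $F_0(G_0)$ and hence everywhere by equivariance.

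Surjectivity is the main step. Given a $\G$-equivariant section $t$ of $F^*E$, I would construct the target $s$ locally, using the data supplied by the local equivalence: an open cover $\{ V_\alpha \}$ of $H_0$ over which one picks continuous local slices $\sigma_\alpha : V_\alpha \to G_0$ together with continuous families of $\H$-arrows $\gamma_\alpha(y) : F_0\sigma_\alpha(y) \to y$. Set $s|_{V_\alpha}(y) := \gamma_\alpha(y) \cdot t(\sigma_\alpha(y))$, where the dot denotes the $\H$-action on $E$. The key obstacle is independence of the choices: for a different choice $(\sigma', \gamma')$, fully-faithfulness of $F$ on arrows produces a unique $\G$-arrow $\delta : \sigma_\alpha(y) \to \sigma'(y)$ with $F_1(\delta) = \gamma'(y)^{-1} \gamma_\alpha(y)$, and $\G$-equivariance of $t$ then forces the two candidates for $s|_{V_\alpha}(y)$ to coincide. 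The same bookkeeping, applied to an arbitrary arrow in $H_1$, yields $\H$-equivariance of the patched global section $s$.

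Continuity of the inverse $t \mapsto s$ is visible on each $V_\alpha$ from the local formula, since $\sigma_\alpha$ and $\gamma_\alpha$ are continuous and the $\H$-action on $E$ is continuous; this patches to continuity globally in the compact-open topology. The principal technical difficulty throughout is the continuous extraction of the local slices $\sigma_\alpha$ and arrow fields $\gamma_\alpha$ from the local-equivalence hypothesis, an argument essentially parallel to the one used in the preceding proposition on locally universal $\H$-Hilbert bundles.
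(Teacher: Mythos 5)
Your proposal is correct and follows essentially the same route as the paper: pull back along $F_0$ in one direction, and in the other direction use essential surjectivity to pick, for each $x \in H_0$, a point $y \in G_0$ and an arrow $h : F(y) \to x$ along which to transport the value of the given section. The paper's own proof is only a two-sentence sketch of this, and you supply the well-definedness (via full faithfulness) and continuity (via local slices) checks that it omits.
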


\begin{proof}

Assume we have a local equivalence $\G \to \H$. Given a section $v$ of a fibre bundle $ P \to H_0$, we can consider the 
section $ F^*(v) : G_0 \to F^*(P) $ defined by $ F^*(v)(x) = (x,v(F(x)))$. And on the other hand, given a section $w$ of a fibre 
bundle $ Q \to G_0 $, we can consider the section $ F_*(w) : H_0 \to F_*(Q) $ defined by $ F_*(t)(x) = (x, w(y)) $
where $ y \in G_0 $ is such that there is $ h \in H_1 $ that satisfies $ F(y) = s(h) $ and $t(h)=x$.
\end{proof}

\begin{lemma}

Suppose that $ F : \G \longrightarrow \H $ is a local equivalence. Then we have an isomorphism:
\[  F^* : RK_{\H }^*(H_0) \longrightarrow RK_{\G }^*(G_0)  \]
\end{lemma}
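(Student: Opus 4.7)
My plan is to reduce this lemma to the two preceding propositions about local equivalences: the one about pullback of locally universal Hilbert bundles, and the one immediately above about sections of fiber bundles.

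First, I would fix a stable representation $H$ of $\H$ and transfer it to $\G$. Since $F^*$ is an equivalence of categories of locally universal Hilbert bundles, $F^*U(\H) \cong U(\G)$, so $F^*H$ satisfies $F^*H \oplus U(\G) \cong F^*H$ and is therefore a stable representation of $\G$. The Fredholm bundle pulls back compatibly: $F^*Fred'(H) = Fred'(F^*H)$ as bundles on $G_0$, with matching fiberwise composition. The same holds after taking fiberwise $n$-fold loop spaces.

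Next I would apply the preceding proposition to the fiber bundle $\Omega^n Fred'(H) \to H_0$. It gives a natural bijection between $\H$-equivariant sections of $\Omega^n Fred'(H)$ and $\G$-equivariant sections of $\Omega^n Fred'(F^*H)$. Applied to the pullback bundle over $H_0 \times I$, the same proposition delivers a bijection on homotopies of sections, so $F^*$ descends to a bijection on homotopy classes.

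The one remaining point is to verify that this bijection restricts correctly to \emph{extendable} sections and \emph{extendable} homotopies. If $s, s' : H_0 \to Fred'(H)$ satisfy $s's = v$ for some section $v$, then fiberwise composition of the pullbacks gives $F^*(s')F^*(s) = F^*(v)$, and since $\pi_{G_0} = \mathrm{id}$, this certifies that $F^*(s)$ is extendable. The same argument with $F_*$ gives the reverse direction, and an analogous check on $H_0 \times I$ handles homotopies. The main (minor) technical point, and likely the only place where something could go wrong, is that the bijection of the previous proposition commutes with fiberwise composition of Fredholm operators; this is immediate from its explicit pointwise formula $F^*(v)(x) = (x, v(F(x)))$. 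Together these steps establish the isomorphism $F^* : RK_\H^{-n}(H_0) \to RK_\G^{-n}(G_0)$ in each nonnegative degree $n$.
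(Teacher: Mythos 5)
Your proposal is correct and follows essentially the same route as the paper: both reduce the lemma to the preceding proposition on pullback of sections under a local equivalence, together with the identification $F^*(\Omega^n Fred'(H)) = \Omega^n Fred'(F^*H)$. Your write-up is in fact slightly more careful than the paper's, since you explicitly verify that $F^*H$ is a stable representation of $\G$ and that extendability and extendable homotopies are preserved under the pullback bijection.
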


\begin{proof}

\noindent $ RK_{\H }^{-n}(H_0) = [ H_0 , \Omega ^n Fred'(H)]_{\H}^{ext} = $
\newline

\noindent $ = \text{Extendable $\H $-homotopy classes of extendable sections of $ \Omega ^n Fred'(H) $} \cong $
\newline

\noindent $ \cong \text{Extendable $\G $-homotopy classes of extendable sections of $ F^*(\Omega ^n Fred'(H)) $} =  $
\newline

\noindent $ = \text{Extendable $\G $-homotopy classes of extendable sections of $\Omega ^n Fred'(F^*H)$} = $
\newline

\noindent $  = [ G_0 , \Omega ^n Fred'(F^*H)]_{\G}^{ext} = $
\newline

\noindent $ = RK_{\G }^{-n}(G_0) $
\end{proof}

\begin{cor}

If $ \G $ and $ \H $ are weakly equivalent, we have an isomorphism  $ RK_{\H }^*(H_0) \cong RK_{\G }^*(G_0) $

\end{cor}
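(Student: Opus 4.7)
\bigskip

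\noindent\textbf{Proof proposal.} This corollary is essentially a formal consequence of the preceding lemma. The plan is to unpack the definition of weak equivalence of Lie groupoids and then apply the lemma along each leg of that equivalence.

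\medskip

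First I would recall that two Lie groupoids $\G$ and $\H$ are weakly (Morita) equivalent precisely when they can be connected by a span of local equivalences, that is, there exist a Lie groupoid $\K$ and local equivalences $F : \K \to \G$ and $F' : \K \to \H$. (Equivalently, one can allow a longer zigzag of local equivalences; the argument is identical, applied one arrow at a time.) Given such a span, the previous lemma applies to each leg and yields isomorphisms
\[
F^* : RK_{\G}^{*}(G_0) \xrightarrow{\;\cong\;} RK_{\K}^{*}(K_0), \qquad (F')^* : RK_{\H}^{*}(H_0) \xrightarrow{\;\cong\;} RK_{\K}^{*}(K_0).
\]
Composing one with the inverse of the other produces the desired isomorphism $RK_{\H}^{*}(H_0) \cong RK_{\G}^{*}(G_0)$.

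\medskip

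The only thing worth checking is that this construction is independent of the chosen span, but for the purposes of the corollary as stated (mere existence of an isomorphism) no such check is needed; naturality up to the choice of span follows from the naturality of pullback of stable representations and extendable sections, already implicit in the proof of the lemma. There is no genuine obstacle here: the real content lies in the previous lemma, which reduces everything to the fact, proved earlier in the paper, that pullback along a local equivalence is an equivalence on locally universal $\G$-Hilbert bundles and on sections of fiber bundles. Once those facts are in hand, the corollary is just functoriality applied to a span.
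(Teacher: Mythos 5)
Your proposal is correct and matches what the paper intends: the corollary is stated without proof immediately after the lemma on local equivalences, and the implicit argument is exactly yours, namely to write the weak equivalence as a span (or zigzag) of local equivalences and apply the lemma to each leg. Nothing is missing.
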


\begin{cor}

If $\G$ is a Bredon-compatible finite Lie groupoid and $U$ is a $\G$-cell, then $RK_{\G}^*(U) \cong K_G^*(M) $ for some compact Lie group
$G$ and a finite $G$-CW-complex $M$.

\end{cor}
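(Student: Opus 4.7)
The plan is to combine the weak-equivalence invariance established in the preceding corollary with the known representability of classical equivariant $K$-theory. By the definition of a Bredon-compatible finite Lie groupoid, a $\G$-cell $U$ is one for which the restricted groupoid $\G \rtimes U$ is weakly equivalent to an action groupoid $G \ltimes M$, where $G$ is a compact Lie group and $M$ is a finite $G$-CW-complex. The base of $G \ltimes M$ is $M$ itself, which is what puts us in a position to apply the corollary directly.

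First I would observe that $RK_{\G}^*(U) = RK_{\G \rtimes U}^*(U)$, since the target $Fred'(H)$, the equivariance condition and the extendability condition all depend only on the $\G$-arrows with source and target in the invariant subspace $U$. Applying the previous corollary to the weak equivalence $\G \rtimes U \simeq G \ltimes M$ then yields
\[ RK_{\G \rtimes U}^*(U) \cong RK_{G \ltimes M}^*(M). \]

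The remaining, and main, obstacle is to identify $RK_{G \ltimes M}^*(M)$ with the Atiyah-Segal group $K_G^*(M)$. For an action groupoid $G \ltimes M$, a stable representation is the trivial $G$-Hilbert bundle $M \times H$ on $M$ for a $G$-universe $H$, and $Fred'(H)$ is the classical Atiyah-Segal representing space for $G$-equivariant $K$-theory. What needs checking is that for a finite $G$-CW-complex $M$ the extendability conditions are automatic, so that the extendable equivariant homotopy classes of extendable $G$-maps $M \to Fred'(H)$ coincide with ordinary equivariant homotopy classes. This follows the inductive argument of L\"uck--Oliver \cite{LO}: on each orbit cell $G/H \times D^n$ one has enough $G$-vector bundles to produce the complementary Fredholm map required by extendability, and the extension then proceeds inductively over the cellular filtration of $M$. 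Iterating with the loop spaces $\Omega^n Fred'(H)$ gives the identification in all degrees, completing the isomorphism $RK_{\G}^*(U) \cong K_G^*(M)$.
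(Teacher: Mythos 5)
Your outline follows the paper's: reduce to $RK_{\G\rtimes U}^*(U)$, invoke invariance under weak equivalence, and then argue that extendability is automatic for a compact Lie group acting on a finite complex. The last step, with the L\"uck--Oliver-style induction over orbit cells, is essentially what the paper does. But there is a genuine gap in your first step, and it is precisely the point on which the paper spends most of its proof. The identification $RK_{\G}^*(U) = RK_{\G\rtimes U}^*(U)$ is not a formality about ``arrows with source and target in $U$'': the two groups are defined by mapping into Fredholm bundles built from \emph{locally universal} Hilbert bundles over different object spaces. $RK_{\G}^*(U)$ uses $Fred'(H)$ for a stable representation $H$ of $\G$, i.e.\ a locally universal $\G$-Hilbert bundle on $G_0$, while $RK_{\G\rtimes U}^*(U)$ must be computed with a locally universal $(\G\rtimes U)$-Hilbert bundle on $U$. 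A priori the restriction $U\times_{\pi}H$ need not be locally universal for $\G\rtimes U$ -- there could be $(\G\rtimes U)$-vector bundles on $U$ that do not embed in it -- and then you would be applying the weak-equivalence corollary to the wrong representing bundle.

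This is exactly where Bredon-compatibility is used, and the fact that your argument never invokes it is the warning sign. The paper's proof takes a $(\G\rtimes U)$-vector bundle $V$ on $U$, uses Bredon-compatibility to find a $\G$-vector bundle $W$ on $G_0$ with $\pi_U^*(W)\cong V\oplus V'$, embeds $W$ unitarily into the universal bundle $H$, and concludes that $V$ embeds in $U\times_{\pi}H$; this shows $U\times_{\pi}H$ is locally universal for $\G\rtimes U$ (and the analogous statement that $E\times M$ is locally universal for $G\rtimes M$). Only after this does the chain of identifications of extendable sections, and hence the comparison with $K_G^*(M)$, go through. You should add this step; the rest of your argument then stands.
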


\begin{proof}
Since $U$ is a $G$-cell, we know that $ \G \rtimes U $ is weakly equivalent to $ G \rtimes M $ for some compact Lie group
$G$ and a finite $G$-CW-complex $M$. Therefore, by the previous corollary:
\[ RK_{\G \rtimes U }^*(U) \cong RK_{G \rtimes M }^*(M) \]
Let $H$ be a locally universal representation of $\G$. We want to see that $ \pi _U^*(H) = U \times _{\pi} H $ is a locally universal
$ \G \rtimes U $-Hilbert bundle. Notice that if $U$ is a $\G$-cell, so is any open $\G$-subspace of $U$. Therefore it is 
enough to prove the previous assertion with universal Hilbert bundles. So assume $H$ is a universal $\G$-Hilbert bundle.

Now let $V$ be a $\G \rtimes U $-vector bundle on $U$. This a $\G $-vector bundle on $U$, and since $\G$ is Bredon-compatible, there 
is a $\G$-vector bundle $W$ on $G_0$ such that $ \pi _U^* (W) = V \oplus V'$ for some other $\G$-vector bundle $V'$ on $U$. Since
$H$ is universal, there is a unitary $\G$-embedding $ W \hookrightarrow H $ and so $ \pi _U^*(W) \hookrightarrow \pi _U^*(H) = 
U \times _{\pi} H $. Since $V$ is a direct summand of $ \pi _U^* (W) $, we have a unitary $\G \rtimes U $-embedding
$ V \hookrightarrow U \times _{\pi} H $.

Thus, if $E$ is a locally universal Hilbert representation of $G$, then $ E \times M $ is a locally universal
$G \rtimes M $-Hilbert bundle.
\newline

\noindent $ RK_{\G \rtimes U }^{-n}(U) = [ U , \Omega ^n Fred'(U \times _{\pi} H) ]_{\G \rtimes U}^{ext} = $
\newline

\noindent $ = [ U , U \times _{\pi} \Omega ^n Fred'(H) ]_{\G \rtimes U}^{ext} = $
\newline

\noindent $ = \text{$(\G \rtimes U)$-extendable sections of $ U \times _{\pi} \Omega ^n Fred'(H)$ over $U$} = $
\newline

\noindent $ = \text{$\G $-extendable sections of $ U \times _{\pi} \Omega ^n Fred'(H)$ over $U$}  = $
\newline

\noindent $ = [ U , \Omega ^n Fred'(H) ]_{\G}^{ext} = RK_{\G}^{-n}(U) $
\newline
\newline
 
\noindent $ RK_{G \rtimes M }^*(M) = [ M , M \times Fred'(E) ]_{G \rtimes M}^{ext} = $
\newline

\noindent $ = \text{$(G \rtimes M)$-extendable sections of $ M \times \Omega ^n Fred'(E) $ over $M$} = $
\newline

\noindent $ = \text{$(G \rtimes M)$-sections of $ M \times \Omega ^n Fred'(E) $ over $M$} = $
\newline

\noindent $ = [ M , \Omega ^n Fred'(E) ]_G = K_G^{-n}(M) $
\newline

\noindent Therefore, $ RK_{\G}^{-n}(U) \cong K_G^{-n}(M) $
\end{proof}

\begin{thm}

If $\G$ is a Bredon-compatible finite Lie groupoid, the Bott homomorphism
\[ b = b(X,A) : RK_{\G}^{-n}(X,A) \longrightarrow RK_{\G}^{-n-2}(X,A) \]
is an isomorphism for any finite $\G$-CW-pair $(X,A)$ and all $n \geq 0 $.

\end{thm}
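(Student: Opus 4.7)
The plan is to reduce the theorem to classical Bott periodicity cell-by-cell, using the Mayer--Vietoris sequence established earlier to propagate the result along the $\G$-CW structure.

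First I would reduce the relative case to the absolute case. By definition, $RK_{\G}^{-n}(X,A)$ is the kernel of the split surjection $RK_{\G}^{-n}(X\cup_A X) \to RK_{\G}^{-n}(X)$ induced by one of the two inclusions. The Bott map $b$ is induced pointwise by a fixed $\G$-map $\Omega^n Fred'(H)\to \Omega^{n+2}Fred'(H)$, so it is natural with respect to all $\G$-maps and in particular commutes with the split surjection above. Consequently, once Bott is known to be an isomorphism on the absolute groups $RK_{\G}^{-n}(Y)$ for every finite $\G$-CW-complex $Y$, it is automatically an isomorphism on $RK_{\G}^{-n}(X,A)$.

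Next I would handle the absolute case by induction on the number of $\G$-cells in $X$. The base case $X=\emptyset$ is trivial. For the inductive step I write $X$ as a pushout $X'\cup_{\partial U} U$ where $U$ is a single $\G$-cell attached to $X'$ along a $\G$-equivariant cofibration $\partial U\hookrightarrow X'$; here $X'$ and $\partial U$ both have strictly fewer cells than $X$. Plugging this square into the Mayer--Vietoris long exact sequence proved earlier yields a ladder whose vertical arrows are the Bott maps; naturality of the connecting homomorphism in that lemma is what allows Bott to act on the whole ladder. By the inductive hypothesis Bott is an isomorphism on $X'$ and on $\partial U$, so the five lemma reduces everything to the single-cell case $Y=U$.

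For a single $\G$-cell $U$, the corollary preceding the theorem gives a natural isomorphism $RK_{\G}^{-n}(U)\cong K_{G}^{-n}(M)$ for some compact Lie group $G$ and some finite $G$-CW-complex $M$. That isomorphism is engineered from two facts, both of which respect the Bott map: the pullback identification along the weak equivalence $\G\rtimes U\simeq G\rtimes M$ (which pulls back the loop-space map on $Fred'$ to itself), and the identification $\Omega^n Fred'(U\times_{\pi} H) = U\times_{\pi} \Omega^n Fred'(H)$. Under this isomorphism the Bott map $b$ corresponds to the classical Bott map on $K_{G}^{-n}(M)$, which is an isomorphism on finite $G$-CW-complexes by the Atiyah--Segal Bott periodicity theorem. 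This closes the induction.

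The main obstacle I expect is purely bookkeeping: one must check that the Bott map is natural with respect to (i) the pushout definition of the relative groups, (ii) the Mayer--Vietoris sequence including its connecting homomorphism, and (iii) the two weak-equivalence identifications used to pass from $RK_{\G}$ to $K_G$ on a cell. Each step is formal because the Bott map arises from a single $\G$-map of representing spaces, but the verification that the connecting homomorphism from the pushout lemma commutes with Bott requires tracing back through the construction of that boundary map and checking it is built out of operations to which the pointwise Bott map is natural.
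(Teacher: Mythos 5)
Your proposal is correct and follows essentially the same route as the paper: induction over the $\G$-cells of $X$ using the Mayer--Vietoris sequence and the five lemma, with the single-cell case reduced via the weak equivalence $\G\rtimes U\simeq G\rtimes M$ to classical equivariant Bott periodicity for compact Lie groups, and the relative case deduced from the pushout definition of the relative groups. The naturality checks you flag as the main bookkeeping are exactly what the paper asserts (without further detail) when it says the Bott map is natural and compatible with the boundary operators.
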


\begin{proof}

Assume first that $ X = Y \cup _{\phi} (U \times D^m) $ where $U \times D^m$ is a $\G$-cell. Assume inductively that $b(Y)$ is
an isomorphism. Since $ RK_{\G}^{-n}(U \times S^{m-1}) \cong RK_G^{-n}( M \times S^{m-1}) $ and 
$ RK_{\G}^{-n}(U \times D^m) \cong RK_G^{-n}( M \times D^m) $, the Bott homomorphisms $b(U \times S^{m-1})$ and 
$b(U \times D^m)$ are isomorphisms by the equivariant Bott periodicity theorem for actions of compact Lie groups. The Bott map
is natural and compatible with the boundary operators in the Mayer-Vietoris sequence for $Y$, $X$, $U \times S^{m-1}$ and
$ U \times D^m $ and so $b(X)$ is an isomorphism by the $5$-lemma. The proof that $b(X,A)$ is an isomorphism follows immediately from the
definitions of the relative groups.
\end{proof}

\noindent Based on the Bott isomorphism we just proved, we can now redefine for all $n \in \Z $
\[ RK_{\G}^n(X,A) = \left \{ \begin{array}{cc}
RK_{\G}^0(X,A) & \text{if $n$ is even} \\
RK_{\G}^{-1}(X,A) & \text{if $n$ is odd} \end{array} \right. \]
For any finite $\G$-CW-pair $(X,A)$, define the boundary operator $\delta ^n : RK_{\G}^n(A) \longrightarrow RK_{\G}^{n+1}(X,A) $ to
be $ \delta : K_{\G}^{-1}(A) \longrightarrow K_{\G}^0(X,A) $ if $n$ is odd, and to be the composite
\[ RK_{\G}^0(A) \stackrel{b}{\longrightarrow} RK_{\G}^{-2}(A) \stackrel{\delta ^{-2}}{\longrightarrow} RK_{\G}^{-1}(X,A) \]
if $n$ is even.
\newline

We can collect all the information we have about $\G$-equivariant representable $K$-theory in the following theorem:

\begin{thm}

If $\G $ is a Bredon-compatible finite Lie groupoid, the groups $RK_{\G }^n(X,A) $ define a $ \Z /2 $-graded multiplicative cohomology 
theory on the category of finite $\G$-CW-pairs.  

\end{thm}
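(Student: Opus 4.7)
The plan is to package the preceding lemmas into the Eilenberg-Steenrod axioms and then supply the ring structure on top. Three of the basic axioms are already established directly above: homotopy invariance by the corollary on $\G$-homotopic $\G$-maps, the wedge/coproduct axiom by the disjoint union lemma, and the Mayer-Vietoris sequence for pushouts along $\G$-cofibrations by the pushout lemma.

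What remains to assemble on the additive side is the long exact sequence of a $\G$-CW-pair $(X,A)$ together with excision. For the long exact sequence I would apply the Mayer-Vietoris sequence to the pushout square defining the double $X \cup_A X$, whose two inclusions $j_1, j_2 : X \to X \cup_A X$ both admit retractions via the fold map. Since $RK_{\G}^{-n}(X,A)$ is defined as the kernel of $j_2^*$, splitting the Mayer-Vietoris sequence along $j_1^* \oplus j_2^*$ isolates this kernel and produces a connecting homomorphism $\delta$ by direct diagram chase; naturality with respect to maps of pairs is automatic from the naturality of Mayer-Vietoris. Excision then follows from the Mayer-Vietoris sequence and the five-lemma applied to an excisive triad. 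The extension to all $n \in \Z$ through the $\Z/2$-grading is already in place via the Bott periodicity theorem; since the Bott map is natural and compatible with $\delta$ by construction, the axioms verified in non-positive degrees transfer automatically.

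The multiplicative structure is induced by composition of Fredholm operators, giving a pairing $Fred'(H) \times Fred'(H') \to Fred'(H \otimes H')$, composed with the $\G$-equivariant diagonal $X \to X \times X$. The hard part will be verifying that this external product respects the extendability condition. Given extendable maps $s : X \to Fred'(H)$ and $t : X \to Fred'(H')$ with witnessing extensions $s'$ and $t'$, one needs to build an extension of the pointwise tensor $s \otimes t$; this amounts to showing that $H \otimes H'$ is again absorbed into a stable representation, i.e.\ $H \otimes H' \oplus U(\G) \cong H \otimes H'$, so that the sections $v \otimes v'$ used to witness extendability live in the correct target Fredholm bundle. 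Granting this, the pairing descends to extendable homotopy classes, commutes with the Bott map in each variable, and so extends over the $\Z/2$-grading to equip $RK_{\G}^*(X,A)$ with a graded product; restricting along the projection $X \to G_0$ then gives the $RK_{\G}^*(G_0)$-module structure.
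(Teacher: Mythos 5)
The paper gives no proof of this theorem at all: it is stated as a summary ("we can collect all the information we have...") of the preceding results, so your plan of assembling homotopy invariance, the disjoint-union lemma, the Mayer--Vietoris sequence for pushouts, the definition of the relative groups via the double $X \cup_A X$, and Bott periodicity is exactly the intended argument, and your derivation of the long exact sequence of a pair by splitting the Mayer--Vietoris sequence of the double along the fold-map retractions is consistent with how the paper defines $RK_{\G}^{-n}(X,A)$ and the boundary operator $\delta^n$.

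The one genuine misstep is your description of the product. Composition of Fredholm operators is an operation $Fred'(H) \times Fred'(H) \to Fred'(H)$ whose effect on indices is additive; it realizes the group structure, not the ring structure, and in particular it does not produce a pairing into $Fred'(H \otimes H')$. The multiplication the paper actually uses (written out explicitly in Section 3 for the twisted case, and implicitly the same here) is the map $(A,A') \mapsto A \otimes 1 + 1 \otimes A'$ from $Fred'(H) \times Fred'(H')$ to $Fred'(H \otimes H')$, pulled back along the diagonal. Once that correction is made, the two points you flag --- that $H \otimes H'$ must again be a stable representation, and that the pairing must preserve extendability so that it descends to extendable homotopy classes --- are precisely the issues that need checking; the paper is silent on both, and the second is not entirely formal, since if $s's = v\pi_X$ and $t't = w\pi_X$ the obvious candidate $(s'\otimes 1 + 1\otimes t')(s\otimes 1 + 1\otimes t)$ contains cross terms $s' \otimes t + s \otimes t'$ that are not visibly pulled back from $G_0$. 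So your outline is the right one, but the verification of extendability for the product deserves an actual argument rather than the phrase "granting this."
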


Note that for a general finite Lie groupoid $\G$, $RK_{\G }^*(-) $ is a multiplicative cohomology theory on the category of 
$\G$-spaces, but it is not clear whether we have Bott periodicity.

\begin{cor}

Let $\G$ be a Bredon-compatible finite Lie groupoid and $U$ a $\G$-cell. Then $ K_{\G}^*(U) \cong RK_{\G}^*(U) $

\end{cor}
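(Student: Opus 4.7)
The plan is to exploit the chain of isomorphisms already assembled in the preceding corollary. Since $U$ is a $\G$-cell, there is a weak equivalence $\G \rtimes U \simeq G \rtimes M$ for some compact Lie group $G$ and some finite $G$-CW-complex $M$, and the preceding corollary gives
\[
RK_{\G}^*(U) \;\cong\; RK_{G \rtimes M}^*(M) \;\cong\; K_G^*(M).
\]
It therefore suffices to establish the parallel identification $K_{\G}^*(U) \cong K_G^*(M)$, after which the two chains may be spliced to produce the desired isomorphism.

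To obtain this parallel identification I would imitate, step by step, the proof of the previous corollary, but with extendable vector bundles in place of extendable sections of $\Omega^n Fred'(H)$. First, $K_{\G}^*(U) \cong K_{\G \rtimes U}^*(U)$: a $\G$-extendable vector bundle on $U$ restricts to a $\G \rtimes U$-extendable vector bundle, and in the other direction the stabilization bundle on $G_0$ witnessing extendability pulls back to a stabilization of the action groupoid, so the two notions agree. Second, $K_{\G \rtimes U}^*(U) \cong K_{G \rtimes M}^*(M)$ follows from the invariance of equivariant $K$-theory under weak equivalence proved in \cite{C}, whose proof is precisely the vector-bundle analogue of the section-level Proposition and Lemma used above: pullback along a local equivalence induces an equivalence on Hilbert bundles, and the argument for $\G$-vector bundles is the same, with extendability preserved because the stabilizing bundle on the object space of one groupoid pulls back to a stabilizer on the other. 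Third, for the compact Lie group action of $G$ on a finite $G$-CW-complex $M$, every $G$-vector bundle is a direct summand of the restriction of a $G$-vector bundle on a point (namely a sufficiently large representation of $G$), so extendability is automatic and the extendable $K$-theory $K_{G \rtimes M}^*(M)$ agrees with the classical $K_G^*(M)$.

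Splicing $K_{\G}^*(U) \cong K_G^*(M) \cong RK_{\G}^*(U)$ then concludes the proof. The main obstacle is purely bookkeeping: one must verify that extendability is carried correctly across the pullback functor induced by the local equivalence $\G \rtimes U \to G \rtimes M$, and that the stabilizers produced on $(G \rtimes M)_0 = M$ are compatible with the classical hypothesis on $M$. Both checks are mild, and once they are in place the statement follows by naturality of all the isomorphisms involved.
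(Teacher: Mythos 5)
Your proposal follows the paper's argument exactly: both splice the isomorphism $RK_{\G}^*(U) \cong K_G^*(M)$ from the preceding corollary with $K_{\G}^*(U) \cong K_G^*(M)$ coming from the weak equivalence $\G \rtimes U \simeq G \rtimes M$. The only difference is that the paper simply cites \cite{C} for the vector-bundle side, whereas you reconstruct that argument in outline; your reconstruction is consistent with what is proved there.
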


\begin{proof}

If $U$ is a $\G$-cell, then $\G \rtimes U$ is weakly equivalent to $ G \rtimes M $ for some compact Lie group
$G$ and a finite $G$-CW-complex $M$. In \cite{C} it is proved that $ K_{\G}^*(U) \cong K_G^*(M) $. By corollary
2.20, we also have $ RK_{\G}^*(U) \cong K_G^*(M) $.
\end{proof}

\begin{thm}

Let $\G$ be a Bredon-compatible finite Lie groupoid and $X$ a finite $\G$-CW-complex. Then $ K_{\G}^*(X) \cong RK_{\G}^*(X) $

\end{thm}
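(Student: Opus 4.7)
The plan is to construct a natural transformation $\phi : K_{\G}^{\ast}(X) \longrightarrow RK_{\G}^{\ast}(X)$ of $\Z/2$-graded multiplicative cohomology theories on finite $\G$-CW-complexes, check that $\phi$ is an isomorphism on $\G$-cells, and then conclude for arbitrary finite $\G$-CW-complexes by induction on the number of cells via Mayer-Vietoris and the Five Lemma.

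To construct $\phi$, recall that a class in $K_{\G}^{0}(X)$ is represented by an extendable $\G$-vector bundle $V$ on $X$, meaning that there exist $V'$ and a $\G$-vector bundle $W$ on $G_{0}$ with $V \oplus V' \cong \pi_{X}^{\ast}W$. Because $H$ is stable we have $U(\G) \subset H$, and so there is a $\G$-equivariant unitary embedding $W \hookrightarrow H$; pulling back gives $V \hookrightarrow \pi_{X}^{\ast}H$. The standard index-family construction (using the orthogonal projection onto $V^{\perp}$ and a trivial shift on a finite rank piece) then produces a $\G$-equivariant map $s_{V} : X \longrightarrow Fred'(H)$. Extendability of $s_{V}$ follows directly from the presentation $V \oplus V' = \pi_{X}^{\ast}W$ combined with the Fredholm datum on $G_{0}$ associated to $W \hookrightarrow H$: the companion map $s_{V'}$ satisfies $s_{V}s_{V'} = v\pi_{X}$ for a section $v$ of $Fred'(H)\to G_{0}$. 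The assignment $V \mapsto s_{V}$ respects isomorphism, direct sum, tensor product, suspension and the Bott map, so it induces a morphism of $\Z/2$-graded multiplicative cohomology theories.

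For the induction, the base case $X = U$ is a single $\G$-cell: the preceding corollary gives $K_{\G}^{\ast}(U) \cong K_{G}^{\ast}(M) \cong RK_{\G}^{\ast}(U)$, and one checks by unwinding definitions that this isomorphism is the one realized by $\phi$. For the inductive step, write $X = Y \cup_{\psi}(U \times D^{m})$ with $Y$ a finite $\G$-CW-complex of strictly fewer cells. Apply the Mayer-Vietoris sequences of both theories to the pushout $U \times S^{m-1} \to U \times D^{m}$, $U \times S^{m-1} \to Y$; $\phi$ furnishes a ladder between them. By inductive hypothesis $\phi$ is an isomorphism on $Y$, on $U \times D^{m}$ (which is $\G$-homotopy equivalent to the $\G$-cell $U$), and on $U \times S^{m-1}$ (handled either by a subsidiary induction on a $\G$-CW-decomposition of $S^{m-1}$, or more directly by invoking $RK_{\G}^{\ast}(U \times S^{m-1}) \cong K_{G}^{\ast}(M \times S^{m-1})$ exactly as in the proof of Bott periodicity above, combined with the analogous statement for $K_{\G}^{\ast}$ coming from \cite{C}). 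The Five Lemma closes the step.

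The main obstacle is verifying that $\phi$ commutes with the Mayer-Vietoris connecting homomorphisms. On the $K_{\G}^{\ast}$ side the boundary is built via clutching of extendable $\G$-vector bundles, whereas on the $RK_{\G}^{\ast}$ side it is built, in the Puppe-style approach of the lemmas of Section 2, from a choice of $\G$-extendable homotopy between the two restrictions to the intersection. The two constructions agree at the level of Fredholm families by tracing through the definition of $s_{V}$, since the clutching of $V_{1}$ and $V_{2}$ along an isomorphism over $A$ translates, under $\phi$, to the concatenation of the associated Fredholm families along the corresponding $\G$-extendable homotopy. This translation is routine but is the one nonautomatic point in the argument.
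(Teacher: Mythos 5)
Your proposal follows essentially the same route as the paper: induction over the cells of $X$ via the Mayer--Vietoris sequence and the $5$-lemma, with the base case supplied by the corollary identifying $K_{\G}^*$ and $RK_{\G}^*$ on $\G$-cells through a weak equivalence to a compact Lie group action. The only difference is one of completeness rather than strategy -- you explicitly construct the comparison map $\phi$ via the index-family construction and address its compatibility with the connecting homomorphisms, points the paper leaves implicit when it writes down the middle vertical arrow of its ladder diagram.
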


\begin{proof}

Assume first that $ X = Y \cup _{\phi} (U \times D^m) $ where $U \times D^m$ is a $\G$-cell. Assume inductively that we have
an isomorphism $ K_{\G}^*(Y) \stackrel{\cong}{\longrightarrow} RK_{\G}^*(Y) $. We know that $ K_{\G}^*(U \times S^{m-1}) \cong RK_{\G}^*(U \times S^{m-1}) $ 
and $ K_{\G}^*(U \times D^m) \cong RK_{\G}^*(U \times D^m) $ by the previous corollary. In fact, since these last two
isomorphism follow from choosing a weak equivalence from the same $\G$-cell to the action of a compact Lie group on a finite
equivariant CW-complex, these isomorphisms are natural with respect to the Mayer-Vietoris sequences for $RK_{\G}^*(-)$ and
$K_{\G}^*(-)$. Let $ RA^{-n} = RK_{\G}^{-n}(Y) \oplus  RK_{\G}^{-n}(U \times D^m) $, $ A^{-n} = K_{\G}^{-n}(Y) \oplus  K_{\G}^{-n}(U \times D^m) $,
$ RB^{-n} = RK_{\G}^{-n}(U \times S^{m-1}) $ and $ B^{-n} = K_{\G}^{-n}(U \times S^{m-1}) $ , then:
\diagram
A^{-n-1} & \rTo & B^{-n-1} & \rTo & K_{\G}^{-n}(X) & \rTo & A^{-n} & \rTo & B^{-n} \\
\dTo^{\cong} & & \dTo^{\cong} & & \dTo & & \dTo^{\cong} & & \dTo^{\cong} \\
RA^{-n-1} & \rTo & RB^{-n-1} & \rTo & RK_{\G}^{-n}(X) & \rTo & RA^{-n} & \rTo & RB^{-n} \\ 
\enddiagram
And so the result follows by the $5$-lemma.
\end{proof}

In other words, we have just proved that the cohomology theory $K_{\G}^*(-)$ is representable by extendable
maps.

\begin{cor}

Let $\G$ be a Bredon-compatible finite Lie groupoid, $X$ a finite $\G$-CW-complex and $H$ a stable representation of $\G$, then:
\[ K_{\G}^n(X) = \left \{ \begin{array}{cc}
[X , Fred'(H)]_{\G}^{ext} & \text{if $n$ is even} \\ $ $
[X, \Omega Fred'(H)]_{\G}^{ext} & \text{if $n$ is odd} \end{array} \right. \]

\end{cor}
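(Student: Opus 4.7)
The plan is short: this corollary is an essentially formal consequence of the preceding theorem, so the task is to chase the definitions carefully and package the conclusion in the stated form.

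First I would invoke the previous theorem, which asserts that for a Bredon-compatible finite Lie groupoid $\G$ and a finite $\G$-CW-complex $X$ one has a natural isomorphism $K_{\G}^*(X) \cong RK_{\G}^*(X)$ in all degrees, where the right-hand side is extended to all $n \in \Z$ via the Bott isomorphism established earlier in the section. Since the graded isomorphism is $\Z/2$-periodic on both sides (by Bott periodicity on the right and by the classical Bott periodicity on the left, see \cite{C}), it suffices to identify $RK_{\G}^0(X)$ and $RK_{\G}^{-1}(X)$ with the two sets on the right-hand side of the displayed formula.

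Next, I would unwind the definition of $RK_{\G}^{-n}(X)$ given at the beginning of the section, namely
\[ RK_{\G}^{-n}(X) = [X , \Omega^n Fred'(H)]_{\G}^{ext}. \]
Setting $n = 0$ and recalling that $\Omega^0 Fred'(H) = Fred'(H)$ yields $RK_{\G}^0(X) = [X, Fred'(H)]_{\G}^{ext}$, which takes care of the even degrees by the redefinition
\[ RK_{\G}^n(X) = RK_{\G}^0(X) \quad \text{when $n$ is even.} \]
Setting $n = 1$ gives $RK_{\G}^{-1}(X) = [X, \Omega Fred'(H)]_{\G}^{ext}$, which handles the odd case via
\[ RK_{\G}^n(X) = RK_{\G}^{-1}(X) \quad \text{when $n$ is odd.} \]

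There is no real obstacle here; all the nontrivial work (construction of the representability isomorphism cell by cell, verification that extendable maps give the correct groups on $\G$-cells, and Bott periodicity for $RK_{\G}^*$) was accomplished in the preceding lemmas, corollaries and theorems. The present statement is simply the combination of the previous theorem with the defining formula for $RK_{\G}^{-n}$ in the two relevant degrees.
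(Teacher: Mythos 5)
Your proposal is correct and matches the paper's (implicit) argument: the paper states this corollary without proof precisely because it is the immediate combination of the preceding theorem $K_{\G}^*(X)\cong RK_{\G}^*(X)$ with the definition $RK_{\G}^{-n}(X)=[X,\Omega^n Fred'(H)]_{\G}^{ext}$ in degrees $0$ and $-1$, together with the $\Z/2$-periodic extension. Nothing further is needed.
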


\noindent We would like to make two observations:

\begin{enumerate}

\item Note that all constructions and results in this sections remain true if we relax the condition of $\G$ being finite
to $\G$ having a locally universal Hilbert representation $U(\G)$.

\item This cohomology theory should not be confused with the $\G$-equivariant representable $K$-theory defined in \cite{EM}. In 
their paper, they define $\G$-equivariant representable $K$-theory of $X$ as the $KK$-groups associated to $C_0(X)$ and show
that this is actually representable (by all $\G$-equivariant continuous maps) by a corresponding Fredholm bundle. Note 
that in our case only a special class of maps are considered to have a correspondence with extendable vector bundles, but 
the Fredholm bundles in both cases are equivalent. 

\end{enumerate}
\section{Twisted equivariant $K$-theory}

\begin{defn}

Let $\G $ be a Lie groupoid and $X$ a $\G $-space. A $\G$-projective bundle on $X$ is a $\G $-space $P$ with a $\G$-equivariant map
$ p : P \longrightarrow X $ such that there exists an equivariant open covering $ \{ U_i \} $ of $X$ for which $ P_{|U_i} =
U_i \times _{\pi } \P (E) $ for some $\G $-Hilbert bundle $E$ on $G_0 $. Moreover, We shall call $P$ a $\G$-stable projective 
bundle if $ P \cong P \otimes \pi^*\P (U(\G )) $ for some locally universal $\G$-Hilbert bundle $U(\G )$ on $G_0$.

\end{defn}

Let $X$ be a $\G$-space and $P \to X $ a $\G$-projective bundle on $X$. We can then construct the bundle $End(P)$ on $X$
whose fibre at $x$ is the vector space $End(H_x)$ of endomorphisms of a Hilbert space $H_x$ such that $P_x = \P (H_x) $. 
Similarly, we can replace $End(H_x)$ by $Fred(H_x)$, the space of Fredholm operators from $H_x$ to $H_x$, and define in
this way a bundle $ Fred(P) \to X $. Now consider the subbundle $Fred'(P) $ of Fredholm operators $A$ such
that $g \to gAg^{-1}$ is continuous for all $ g \in G_1 $ for which the expression makes sense. 

\begin{defn}

We say that a $\G$-equivariant section $s$ of $Fred'(P) \to X $ is extendable if there is another $\G$-equivariant section 
$t$ such that $ts = v \pi _X $ for some section $v$ of $Fred'(P) \to G_0 $.

\end{defn}

\begin{defn}

We say that a homotopy $ H : X \times I \to Fred'(P) $ of $\G$-equivariant sections is extendable if each $H_t$ is an
extendable $\G$-equivariant section of $ Fred'(P) \to X $.

\end{defn}

\begin{defn}

Let $P$ be a $\G$-stable projective bundle and $ X $ a $\G$-space. We define the $\G$-equivariant twisted $K$-theory of $X$
with twisting $P$ to be the group of extendable homotopy classes of extendable $\G$-equivariant sections of $Fred'(P)$ and 
we denote it by $ ^PK_{\G}(X) $

\end{defn}

In order to define the rest of the twisted $K$-groups, we need to introduce the fibrewise iterated loop-space 
$ \Omega_X^n Fred'(P) $, which is a $\G$-bundle on $X$ whose fibre at $x$ is $ \Omega ^n Fred'(H_x) $.

\begin{defn}

The extendable homotopy classes of sections of this bundle will be denoted by $ ^PK^{-n}_{\G}(X) $.

\end{defn}

The groups $ ^PK^{-n}_{\G}(X) $ are functorially associated to the pair $(X,P)$ and so an isomorphism $ P \to P'$ of 
$\G$-stable projective bundles on $X$ induces an isomorphism $ {^PK^{-n}_{\G}(X)} \to {^{P'}K^{-n}_{\G}(X)} $ for all
$n \geq 0 $

\begin{cor}

If $\G$ is a Bredon-compatible finite Lie groupoid and $P$ is a trivial $\G$-stable projective bundle on a finite
$\G$-CW-complex $X$, then $ {^PK^*_{\G}(X)} \cong K^*_{\G}(X) $.

\end{cor}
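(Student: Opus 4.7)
The plan is to reduce this corollary to the representability theorem proved earlier in Section~2, which identifies $K_{\G}^*(X)$ with $RK_{\G}^*(X)$ on finite $\G$-CW-complexes when $\G$ is Bredon-compatible and finite. Once the twisted groups $^PK_{\G}^{*}(X)$ for a trivial stable $P$ are unfolded as sections of a pulled-back Fredholm bundle, they should coincide on the nose with $RK_{\G}^{*}(X)$, and then the representability theorem closes the argument.

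The first step is to put $P$ in a good form. Since $P$ is trivial I would write $P = X \times_{\pi}\P(E)$ for some $\G$-Hilbert bundle $E$ on $G_0$, and use the $\G$-stability hypothesis $P \cong P \otimes \pi^*\P(U(\G))$ together with the existence and uniqueness of $U(\G)$ (from the corollary of Proposition~2.6 on locally universal Hilbert bundles for finite Lie groupoids) to replace $E$ by a stable representation $H$ of $\G$, so that $P \cong X \times_{\pi}\P(H)$ globally, with $H \oplus U(\G) = H$.

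The second step is the identification of the Fredholm bundles. Because $P$ is globally a pullback, one has $Fred'(P) = X \times_{\pi} Fred'(H)$ on $X$ and $\Omega^n_X Fred'(P) = X \times_{\pi}\Omega^n Fred'(H)$. A $\G$-equivariant section of this bundle is the same datum as a $\G$-equivariant map $X \to \Omega^n Fred'(H)$ covering the anchor $\pi_X$, and the extendability condition for a section translates verbatim into the extendability condition for the corresponding map: in both definitions one asks for a second section or map whose composition with the given one factors through a section of $Fred'(H) \to G_0$ pulled back along $\pi_X$. The same equivalence holds for homotopies. Therefore
\[
{}^PK_{\G}^{-n}(X) \;=\; [\,X,\ \Omega^n Fred'(H)\,]_{\G}^{ext} \;=\; RK_{\G}^{-n}(X),
\]
and invoking the representability theorem $K_{\G}^*(X) \cong RK_{\G}^*(X)$ finishes the proof.

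The step I expect to be the main obstacle is the first one: verifying that a trivial, $\G$-stable projective bundle really does come from a stable representation $H$ in the direct-sum sense used to define $Fred'(H)$, as opposed to merely the tensor sense appearing in the definition of stability for projective bundles. The uniqueness of $U(\G)$ and standard absorption arguments ($U(\G)\oplus U(\G)\cong U(\G)$, $E\otimes U(\G)$ absorbs $U(\G)$) should make this routine, but it is the one place where one actually has to handle the bundles rather than just shuffle definitions; everything else is a formal unwinding followed by an appeal to the theorem already established.
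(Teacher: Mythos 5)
Your proposal is correct and takes essentially the same route as the paper, which disposes of this corollary in one line by citing the representability theorem (Corollary 2.25, i.e.\ $K_{\G}^*(X) \cong RK_{\G}^*(X)$ realized by extendable maps into $Fred'(H)$); you simply make explicit the unwinding -- trivial stable $P$ comes from a stable representation $H$, sections of $X \times_{\pi} Fred'(H)$ are extendable maps $X \to Fred'(H)$ -- that the paper leaves implicit.
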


\begin{proof}
It follows from the representability of $\G$-equivariant $K$-theory, that is, corollary 2.25.
\end{proof}

\begin{cor}

Let $P$ be a $\G$-stable projective bundle on $Y$. If $f_0$, $f_1 : X \longrightarrow Y $ are $\G$-homotopic $\G$-maps 
between $\G$-spaces, then $f_0^*(P)$ is isomorphic to $f_1^*(P) $ and we have a commutative diagram:

\diagram
{^PK_{\G}^{-n}(Y)} & \rTo^{f_0^*}_{\cong} & {^{f_0^*(P)}K_{\G}^{-n}(X)}  \\
 & \rdTo^{f_1^*}{\cong} & \dTo^{\cong} \\
 &                      &{^{f_1^*(P)}K_{\G}^{-n}(X)} \\
\enddiagram
for all $n \geq 0 $.

\end{cor}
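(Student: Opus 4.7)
The plan is to leverage a $\G$-homotopy $H : X \times I \to Y$ joining $f_0$ to $f_1$, and to pull back $P$ along it to obtain a bundle $Q := H^*P$ on $X \times I$. Pullback commutes with tensoring by $\pi^*\P(U(\G))$, so $Q$ is again a $\G$-stable projective bundle; its restrictions along the inclusions $i_k : X \hookrightarrow X \times I$ are $i_k^*Q = f_k^*P$. Both assertions of the corollary will follow once I establish that $Q$ is $\G$-equivariantly isomorphic to the pullback $p^*(i_0^*Q)$ along the projection $p : X \times I \to X$: restricting that isomorphism to $X \times \{1\}$ yields a distinguished $\phi : f_0^*P \to f_1^*P$, and a section of $Fred'(P)$ pulled back along $H$ will provide the homotopy between the two images in the diagram.

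To prove $Q \cong p^*i_0^*Q$, I would induct over the $\G$-cells of $X$. On a single $\G$-cell, Corollary 2.20 reduces the question to stable projective bundles over $M \times D^m \times I$ for a compact Lie group action, where the classical covering-homotopy theorem—proved via a partition of unity subordinate to local trivializations that identify the bundle with $\P(E)$ for an equivariant Hilbert bundle $E$—supplies the isomorphism. The cellular isomorphisms glue because inclusions of $\G$-subcomplexes are cofibrations and because a $\G$-equivariant isomorphism of stable projective bundles extends across a cofibration: the relevant obstruction fibres are projective unitary groups of a locally universal Hilbert bundle, which are contractible thanks to stability.

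For commutativity, let $s$ represent a class in ${^PK_\G^{-n}(Y)}$, viewed as an extendable $\G$-section of $\Omega_Y^n Fred'(P)$. Its pullback $H^*s$ is an extendable $\G$-section of $\Omega^n Fred'(Q)$, because a witness $(s',v)$ with $s's = v \pi_Y$ pulls back to $(H^*s', v)$ satisfying $(H^*s')(H^*s) = v \pi_Y H = v \pi_{X \times I}$; the base section $v$ on $G_0$ is unchanged. Transporting $H^*s$ across the isomorphism $Q \cong p^*i_0^*Q$ exhibits an extendable $\G$-homotopy on $X \times I$ whose endpoints are $f_0^*s$ and $\phi^*(f_1^*s)$, which is precisely the commutativity of the triangle. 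Applying the same argument to a reversed homotopy produces an isomorphism inverse to the one induced by $\phi$, showing the right-hand vertical map is indeed an isomorphism.

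The main obstacle is the first step, namely the covering-homotopy theorem for $\G$-stable projective bundles. Avoiding circularity with the $H_{\G}^3$-classification of Section 5 forces a direct argument, and the equivariant gluing step—prolonging a bundle isomorphism across a $\G$-cofibration—uses stability essentially, since without it the obstructions to extension would be genuine. Once the bundle isomorphism $\phi$ is in hand, the naturality and extendability bookkeeping for sections is routine.
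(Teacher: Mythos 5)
The paper offers no proof of this corollary at all --- it is stated without argument between the trivial-twisting corollary and the Mayer--Vietoris lemma, presumably on the grounds that the definition is by homotopy classes of sections and the compact Lie group case is handled in \cite{AS2}. Your strategy --- pull $P$ back along a homotopy $H$ to a bundle $Q$ on $X\times I$, prove the covering homotopy theorem $Q\cong p^*i_0^*Q$, and transport sections across that isomorphism --- is the standard and correct route, and your extendability bookkeeping (the witness $(s',v)$ pulls back because $\pi_Y H=\pi_{X\times I}$) is fine.

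There is, however, a genuine error in your justification of the gluing step. You claim that the obstruction fibres for prolonging an isomorphism of stable projective bundles across a $\G$-cofibration are ``projective unitary groups of a locally universal Hilbert bundle, which are contractible thanks to stability.'' This is false: stability makes the \emph{unitary} group contractible (Kuiper's theorem), but the projective unitary group $PU(\H)=U(\H)/S^1$ is then a $K(\Z,2)$; indeed the classification $Proj_{\G}(X)\cong H^3_{\G}(X;\Z)$ in Section 7 of this paper depends precisely on $PU(\H)$ \emph{not} being contractible. Taken literally, your extension principle would let one extend an isomorphism of stable projective bundles across an arbitrary $\G$-cofibration, which would trivialize all such bundles on $\G$-CW-complexes. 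The step can be repaired without contractibility: in the covering homotopy theorem the relevant inclusions have the form $U\times(D^m\times\{0\}\cup S^{m-1}\times I)\hookrightarrow U\times D^m\times I$, whose subspace is a $\G$-deformation retract of the total space, so a section of the bundle of fibrewise isomorphisms extends for that reason alone. Two smaller points: your induction over cells only treats $\G$-CW-complexes, while the corollary is stated for arbitrary $\G$-spaces (a partition-of-unity argument over the equivariant trivializing cover of the definition would give the general paracompact case directly); and the isomorphism $f_0^*P\cong f_1^*P$ you construct depends on the choice of $H$, which is harmless here since the statement only requires the existence of one isomorphism making the triangle commute.
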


\begin{lemma}

Let
\diagram
A & \rTo^{i_1} & X_1 \\
\dTo_{i_2} & & \dTo_{j_1} \\
X_2 & \rTo^{j_2} & X \\
\enddiagram
be a pushout square of $\G$-spaces and $P$ a $\G$-stable projective bundle on $X$. Let $P_k = j_k^*(P)$ for $k=1,2$ and
$P_A = (i_2)^*(P_2) $. Then there is a natural exact sequence, infinite to the left
\[  \ldots \stackrel{d^{-n-1}}{\longrightarrow} {^PK_{\G}^{-n}(X)} \stackrel{j_1^* \oplus j_2^*}{\longrightarrow} {^{P_1}K_{\G}^{-n}(X_1)} \oplus
    {^{P_2}K_{\G}^{-n}(X_2)} \stackrel{i_1^*-i_2^*}{\longrightarrow} {^{P_A}K_{\G}^{-n}(A)} \stackrel{d^{-n}}{\longrightarrow} \ldots \]
\[  \ldots \longrightarrow {^{P_A}K_{\G}^{-1}(A)} \stackrel{d^{-1}}{\longrightarrow} {^PK_{\G}^0(X)} \stackrel{j_1^* \oplus j_2^*}{\longrightarrow} 
    {^{P_1}K_{\G}^0(X_1)} \oplus {^{P_2}K_{\G}^0(X_2)} \stackrel{i_1^*-i_2^*}{\longrightarrow} {^{P_A}K_{\G}^0(A)} \] 

\end{lemma}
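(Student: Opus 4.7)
The plan is to adapt the proof of Lemma 2.12 essentially verbatim, replacing the constant target $Fred'(H)$ by the fibrewise Fredholm bundle $Fred'(P) \to X$ and its restrictions $Fred'(P_k) \to X_k$ and $Fred'(P_A) \to A$. Two facts make this substitution painless. First, the twists pull back compatibly along the pushout maps: the canonical equalities $P_A = i_1^* P_1 = i_2^* P_2 = (j_1 i_1)^* P = (j_2 i_2)^* P$ allow sections on $X_1$ and $X_2$ that agree on $A$ to glue uniquely to a section on $X$. Second, extendability is witnessed by a section $v$ of $Fred'(P) \to G_0$ that lives \emph{downstairs} on $G_0$, so it is insensitive to gluings happening on $X$.

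Concretely, I would first record the twisted analogues of Lemmas 2.10 and 2.11. The twisted Lemma 2.10 is immediate: if $\phi: X \to Y$ is $\G$-equivariant and $s$ is an extendable section of $Fred'(\phi^* P)$ with $s's = v \pi_X = v\pi_Y \phi$, then $t := v \pi_Y$ is $\G$-extendable (being the pullback of the section $v$ on $G_0$) and satisfies $s's = t\phi$. The twisted Lemma 2.11 is then proved word for word as in the untwisted case: use that $i_1 \times \mathrm{id}$ is a $\G$-equivariant cofibration to extend the homotopy $F : A \times I \to Fred'(P_A)$ and its extendability witness $F'$ to $X_1 \times I$; arrange the compatibility $G'G = \bar F \circ (\pi_1 \times \mathrm{id})$ by invoking the twisted Lemma 2.10; and then define $t, t' : X \to Fred'(P)$ and the global witness $v$ on $G_0$ by the universal property of the pushout, using the same piecewise formulas displayed in the proof of Lemma 2.11.

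With these two ingredients in place, the long exact sequence is extracted by exactly the Puppe-sequence argument cited in the untwisted case, namely the results of \cite{B} together with the proof of lemma 3.8 in \cite{LO}. The infinite extension to the left uses the fibrewise loop bundles $\Omega_X^n Fred'(P)$ introduced above, whose extendable homotopy classes of sections compute $^P K_{\G}^{-n}(X)$ by definition.

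The main obstacle is bookkeeping rather than anything conceptual: unlike the untwisted case, where the target was a single bundle on $G_0$, here one must verify that the glued section takes values in the bundle $Fred'(P)$ over $X$ rather than in a disjoint union of the $Fred'(P_k)$'s, and that the extendability witness $v$ can be assembled consistently from $v_1$ and $v_2$. Both concerns are dispatched by the canonical identifications $P_A = i_1^* P_1 = i_2^* P_2$ — which make the Hilbert-space identifications along $A$ automatic — and by the fact that $v$ is defined on $G_0$, so the formula $v(x) = v_k(\pi_k(x_k))$ for $x = \pi_k(x_k)$ is well-defined exactly as in the display at the end of the untwisted Lemma 2.11. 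Once this is observed, no further modification of the untwisted argument is required.
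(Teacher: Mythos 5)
Your proposal matches the paper's approach exactly: the paper's entire proof is the remark that "the proof is essentially the same as that of lemma 2.16" (the untwisted Mayer--Vietoris sequence), which in turn rests on the two gluing lemmas, Brown's results, and the proof of lemma 3.8 in \cite{LO}. You have simply spelled out the details that the paper leaves implicit --- in particular the two observations that make the substitution of $Fred'(P)$ for $Fred'(H)$ harmless, namely the compatible pullbacks of the twists along the pushout and the fact that the extendability witness lives on $G_0$ --- and these are correct.
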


\begin{proof}
The proof is essentially the same as that of lemma 2.16. 
\end{proof}

There is also a multiplication:
\[ {^PK_{\G}^{-n}(X)} \otimes {^{P'}K_{\G}^{-m}(X)} \longrightarrow {^{P\otimes P'}K_{\G}^{-n-m}(X)} \]
coming from the map $(A,A') \to A \otimes 1 + 1 \otimes A' $. This extends the multiplication in untwisted $\G$-equivariant 
$K$-theory and makes ${^PK_{\G}^*(X)}$ into a $K_{\G}^*(X)$-module.
\newline

Just like in the case of representable $K$-theory, for any $\G$-stable Hilbert bundle there is a $\G$-map 
$ \Omega_X^n Fred'(H) \to \Omega_X^{n+2} Fred'(H) $. Therefore, for any $\G$-stable
projective bundle $P$ on $X$ there is a Bott map:
\[ b(X,P) : {^PK^{-n}_{\G}(X)} \longrightarrow {^PK^{-n-2}_{\G}(X)} \]
We do not know if this map is an isomorphism in general. Now we will prove that $b(X,P)$ is an isomorphism when $ X$ is a 
finite $\G$-CW-complex using a similar argument to the one used for untwisted $\G$-equivariant $K$-theory \cite{C}.

\begin{prop}

Suppose that $ F : \G \longrightarrow \H $ is a local equivalence. Then the pullback functor
\[ F^* : \{ \text{Fiber bundles on } \H \} \longrightarrow \{ \text{Fiber bundles on } \G \} \]
is an equivalence of categories.

\end{prop}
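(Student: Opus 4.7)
The strategy is to construct a quasi-inverse pushforward $F_*$, splitting the work into two halves: the fully faithful part will reduce to the preceding proposition on sections (applied to a Hom-bundle), while essential surjectivity requires a descent/gluing argument using the defining properties of a local equivalence.

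For essential surjectivity, given a $\G$-fiber bundle $Q \to G_0$, I build $F_*(Q) \to H_0$ by descent. Because $F$ is a local equivalence, the map $H_1 \times_{H_0} G_0 \to H_0$ obtained from the target projection is a surjective local homeomorphism, so $H_0$ admits an open cover $\{V_\alpha\}$ with local sections $(\sigma_\alpha, h_\alpha)$; that is, for each $x \in V_\alpha$ we have $\sigma_\alpha(x) \in G_0$ together with an $\H$-arrow $h_\alpha(x): F(\sigma_\alpha(x)) \to x$. Set $Q_\alpha := \sigma_\alpha^{*}(Q)$ on $V_\alpha$. On overlaps, the composite $h_\alpha(x)^{-1} h_\beta(x)$ is an arrow in $\H$ between points in the image of $F$, and so the fully faithful condition $G_1 \cong H_1 \times_{H_0 \times H_0} (G_0 \times G_0)$ lifts it uniquely to an arrow $g_{\alpha\beta}(x): \sigma_\beta(x) \to \sigma_\alpha(x)$ in $\G$. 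Transport via the $\G$-action on $Q$ then supplies transition functions $Q_\beta|_{V_\alpha \cap V_\beta} \to Q_\alpha|_{V_\alpha \cap V_\beta}$, and the cocycle identity is automatic from uniqueness of lifts. The $\H$-action on $F_*(Q)$ is defined by the same procedure: each arrow of $\H$ is lifted locally to $\G$ via the fully faithful condition and realized through the $\G$-action on $Q$, and well-definedness follows once again from uniqueness of lifts.

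For the fully faithful part, observe that a morphism of fiber bundles $P \to P'$ on $\H$ is the same datum as an $\H$-invariant section of the bundle $\mathrm{Hom}(P,P')$ (and an isomorphism is a section of the open subbundle $\mathrm{Iso}(P,P')$); likewise on the $\G$ side. Since $\mathrm{Hom}(F^*P, F^*P') = F^*\mathrm{Hom}(P,P')$, the bijection on morphism sets follows from the preceding proposition applied to the Hom-bundle. Finally, the natural isomorphisms $F^* F_*(Q) \cong Q$ and $F_* F^*(P) \cong P$ are constructed fibrewise using the tautological arrows $h_\alpha(x)$ and then shown to descend; both reduce to the local model on each chart $V_\alpha$, where $F^*$ and $F_*$ are manifestly inverse up to the lifted arrow.

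The main obstacle is the careful bookkeeping in the descent: verifying the cocycle condition, the $\H$-equivariance of the glued bundle, and independence of the chosen lifts. These all rest on the uniqueness of arrow lifts in the fully faithful condition; once this principle is invoked systematically, every consistency check is routine, and the construction is essentially forced.
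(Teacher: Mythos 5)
Your argument is correct, but it is worth noting that the paper does not actually prove this proposition: the proof given is simply a citation to Freed--Hopkins--Teleman, where the descent of equivariant bundles along local equivalences is carried out. What you have written is essentially that standard argument, so in effect you have filled in the omitted proof rather than diverged from it. The two halves are organized sensibly: reducing fully faithfulness to the paper's preceding proposition on sections (applied to the bundle $\mathrm{Hom}(P,P')$, whose $\H$-invariant sections are exactly the bundle maps, together with the identification $\mathrm{Hom}(F^*P,F^*P')=F^*\mathrm{Hom}(P,P')$) is exactly the right move, and the descent construction of $F_*$ with cocycle conditions and $\H$-action forced by uniqueness of arrow lifts in the pullback square $G_1 \cong H_1 \times_{H_0\times H_0}(G_0\times G_0)$ is sound; the same uniqueness gives the natural isomorphisms $F^*F_*\cong \mathrm{id}$ and $F_*F^*\cong \mathrm{id}$. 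One small imprecision: the map $H_1\times_{H_0}G_0\to H_0$ need not be a local homeomorphism (its fibers can be positive-dimensional); the definition of local equivalence only gives that it is surjective and admits local sections (for Lie groupoids, a surjective submersion), which is all your construction actually uses, so nothing breaks. You should also remark that continuity (or smoothness) of the lifted transition functions $g_{\alpha\beta}$ comes from the fact that the fully faithful condition is a homeomorphism onto the fiber product, not merely a bijection on arrows.
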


\begin{proof}

See \cite{FHT}
\end{proof}

\begin{prop}

Let $ F : \G \longrightarrow \H $ be a local equivalence, and $P$ a $\H$-stable projective bundle on $H_0$. Then we have
an isomorphism $ F^* : {^PK_{\H}^*(H_0)} \longrightarrow {^{F^*(P)}K_{\G}^*(G_0)} $

\end{prop}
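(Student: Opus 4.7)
The plan is to mirror the argument used for untwisted representable $K$-theory in Lemma 2.18, translating each structure through the equivalence afforded by $F$. First I would observe that $F^*$ commutes with the constructions of the Fredholm bundle $Fred'(-)$, the fibrewise iterated loop space $\Omega^n_{(-)}$, and the tensor product with $\pi^*\P(U(\G))$, so that $F^*(\Omega^n_{H_0} Fred'(P)) \cong \Omega^n_{G_0} Fred'(F^*(P))$, and so that $F^*(P)$ is a $\G$-stable projective bundle on $G_0$ by Proposition 2.8 together with Proposition 3.9. This makes the right-hand side of the claimed isomorphism well-defined.

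Next, I would apply Proposition 2.17 to obtain mutually inverse bijections $F^*$ and $F_*$ between $\H$-equivariant sections of $\Omega^n_{H_0} Fred'(P)$ over $H_0$ and $\G$-equivariant sections of $\Omega^n_{G_0} Fred'(F^*(P))$ over $G_0$. The key check is that both $F^*$ and $F_*$ preserve extendability: if witnesses $s'$ and $v$ satisfy $s' s = v \pi_{H_0}$ for an extendable $\H$-equivariant section $s$, then the pullbacks satisfy $F^*(s') F^*(s) = F^*(v) \pi_{G_0}$, so $F^*(s)$ is extendable; the converse direction uses $F_*$ together with the fact that $F_* \circ F^* = \mathrm{id}$ on sections. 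Applying the same argument slicewise at each $t \in I$ shows that extendable homotopies correspond bijectively under $F^*$.

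Combining these steps yields the claimed bijection on extendable homotopy classes of extendable sections; that it is a group homomorphism follows because the group operation (derived from the fibrewise structure of $Fred'(P)$ and loop concatenation) is preserved by $F^*$. Extension to all $n \in \Z$ is immediate since the Bott map $\Omega_{H_0}^n Fred'(P) \to \Omega_{H_0}^{n+2} Fred'(P)$ is defined at the level of $\H$-bundles and therefore commutes with $F^*$. The main obstacle is the careful verification that extendability passes through $F^*$ and $F_*$ in both directions; once the bookkeeping of the witnesses $s'$ and $v$ is settled, the rest of the argument is formal.
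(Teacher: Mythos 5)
Your proposal is correct and follows essentially the same route as the paper: establish that $F^*(P)$ is $\G$-stable via Proposition 2.8 and the fiber-bundle equivalence, then transport extendable sections and extendable homotopies through the section correspondence of Proposition 2.17. You simply spell out the extendability and Bott-map bookkeeping that the paper leaves implicit.
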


\begin{proof}
First of all, $F^*(P)$ is a $\G$-stable projective bundle by proposition 2.8 and the previous proposition. Since these groups 
are defined using sections, the result follows from proposition 2.17. 
\end{proof}

\begin{cor}
If $\G$ and $\H$ are weakly equivalent and $P$ a $\H$-stable projective bundle on $H_0$, then $ {^PK_{\H}^*(H_0)} \longrightarrow 
{^{F^*(P)}K_{\G}^*(G_0)} $
\end{cor}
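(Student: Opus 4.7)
The plan is to reduce to the previous proposition by using the standard fact that a weak equivalence between Lie groupoids can always be realized as a span (or zigzag) of local equivalences. So choose a groupoid $\mathcal{K}$ together with local equivalences $F_1 : \mathcal{K} \longrightarrow \G$ and $F_2 : \mathcal{K} \longrightarrow \H$ exhibiting the weak equivalence between $\G$ and $\H$. The symbol $F^*(P)$ in the statement should be interpreted as the $\G$-stable projective bundle on $G_0$ that corresponds to $P$ under this span, and the main task is to (a) make sense of this correspondence, and (b) show it induces an isomorphism on twisted $K$-theory.

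First I would use Proposition 3.8, applied to $F_1$, to obtain an inverse (up to natural isomorphism) for the pullback equivalence of categories of fibre bundles. Combining this with the pullback along $F_2$, I transport $P$ to a $\G$-stable projective bundle $Q$ on $G_0$ satisfying $F_1^*(Q) \cong F_2^*(P)$ as $\mathcal{K}$-stable projective bundles; that $Q$ is itself $\G$-stable follows from Proposition 2.8 together with Proposition 3.8 (which sends locally universal Hilbert bundles to locally universal ones and hence preserves the stability condition). This is the object meant by $F^*(P)$ in the statement.

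Next I apply Proposition 3.9 twice. Since $F_2$ is a local equivalence, it gives an isomorphism
\[
F_2^* : {^PK_{\H}^*(H_0)} \stackrel{\cong}{\longrightarrow} {^{F_2^*(P)}K_{\mathcal{K}}^*(K_0)}.
\]
Since $F_1$ is a local equivalence and $F_1^*(Q) \cong F_2^*(P)$, the same proposition gives
\[
F_1^* : {^QK_{\G}^*(G_0)} \stackrel{\cong}{\longrightarrow} {^{F_1^*(Q)}K_{\mathcal{K}}^*(K_0)} \cong {^{F_2^*(P)}K_{\mathcal{K}}^*(K_0)},
\]
and composing the first with the inverse of the second yields the desired isomorphism ${^PK_{\H}^*(H_0)} \cong {^QK_{\G}^*(G_0)}$.

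The main obstacle, such as it is, lies in checking that the bundle $Q$ produced by Proposition 3.8 really is $\G$-stable — i.e., that the equivalence of categories of fibre bundles respects the tensor product with the projectivization of a locally universal Hilbert bundle. Here I would invoke Proposition 2.8, which says locally universal $\H$-Hilbert bundles pull back to locally universal $\G$-Hilbert bundles under local equivalences, together with the compatibility of pullbacks with projectivization and tensor products. If one prefers a more general weak equivalence presented as a zigzag of arbitrary length rather than as a single span, the argument is identical: iterate the above construction along each leg, using Proposition 3.8 to transport the bundle and Proposition 3.9 to produce the isomorphism at each step, and compose.
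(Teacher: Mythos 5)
Your argument is correct and is essentially the paper's own (the paper treats this corollary as an immediate consequence of the preceding proposition, leaving the zigzag-of-local-equivalences argument implicit). Your spelling out of the span $\G \leftarrow \mathcal{K} \rightarrow \H$, the transport of $P$ via the equivalence of categories of fibre bundles, and the preservation of stability via the correspondence of locally universal Hilbert bundles matches exactly how the paper handles the same point in the proof of the proposition on local equivalences.
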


\begin{cor}

If $\G$ is a Bredon-compatible finite Lie groupoid, $U$ is a $\G$-cell and $P$ is a $\G$-stable projective bundle on $U$, then 
${^PK_{\G}^*(U)} \cong {^QK_G^*(M)} $ for some compact Lie group $G$, some finite $G$-CW-complex $M$ and some $G$-stable 
projective bundle $Q$ on $M$.

\end{cor}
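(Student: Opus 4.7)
The strategy mirrors the proof of Corollary 2.20 for the untwisted case, passing through the action groupoid $\G \rtimes U$ and a weak equivalence to a compact Lie group action. First I would reformulate the problem in terms of $\G \rtimes U$ by establishing
\[ {^PK_{\G}^*(U)} \cong {^PK_{\G \rtimes U}^*(U)}. \]
The essential point is that a $\G$-equivariant section of $Fred'(P) \to U$ is the same datum as a $(\G \rtimes U)$-equivariant section, and the extendability condition transports across this identification because all compositions take place over $U$, which is the unit space of $\G \rtimes U$. For this to make sense, one must know that $P$ remains a $(\G \rtimes U)$-stable projective bundle; this follows from the fact that $\pi_U^*(U(\G))$ is a locally universal Hilbert bundle for $\G \rtimes U$ on $U$, which was verified in the proof of Corollary 2.20 and applies verbatim here.

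Next, since $U$ is a $\G$-cell, the action groupoid $\G \rtimes U$ is weakly equivalent to $G \rtimes M$ for some compact Lie group $G$ and a finite $G$-CW-complex $M$. Fixing such a local equivalence $F$, I would set $Q = F^*(P)$. By Proposition 2.8 together with Proposition 3.10, $Q$ is a $(G \rtimes M)$-stable projective bundle on $M$, and then Corollary 3.12 (the weak-equivalence invariance of twisted $K$-theory of the unit space) yields
\[ {^PK_{\G \rtimes U}^*(U)} \cong {^QK_{G \rtimes M}^*(M)}. \]

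Finally I would identify ${^QK_{G \rtimes M}^*(M)}$ with the classical twisted equivariant $K$-theory ${^QK_G^*(M)}$. Because $M$ is the unit space of $G \rtimes M$, every $(G \rtimes M)$-equivariant section is automatically extendable (exactly as in the last part of the proof of Corollary 2.20, where the extendable sections over the base collapse onto the ordinary equivariant sections), so the extendability decoration can be dropped. Chaining the three isomorphisms gives the claim.

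The main obstacle is the first step: one has to be careful that viewing $P$ as a $(\G \rtimes U)$-stable projective bundle is genuinely well defined and that the sections-with-extendability picture is preserved under the change of groupoid. Once the local universality of $\pi_U^*(U(\G))$ on $U$ for $\G \rtimes U$ is in hand, the remaining steps are formal consequences of the representability and weak-equivalence invariance already established in Sections 2 and 3.
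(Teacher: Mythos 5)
Your proposal matches the paper's own proof: both pass through the action groupoid $\G \rtimes U$, use the local universality of $\pi_U^*(U(\G))$ (established in the proof of Corollary 2.20) to identify $\G$-stability of $P$ with $(\G \rtimes U)$-stability, invoke the weak-equivalence invariance of twisted $K$-theory of the unit space to reach $G \rtimes M$ with $Q = F^*(P)$, and finally drop the extendability decoration on the compact-group side to land in ${^QK_G^*(M)}$. The only difference is the order in which the three identifications are chained, which is immaterial.
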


\begin{proof}
Since $U$ is a $G$-cell, we know that $ \G \rtimes U $ is weakly equivalent to $ G \rtimes M $ for some compact Lie group
$G$ and a finite $G$-CW-complex $M$. Therefore, by the previous corollary:
\[ {^PK_{\G \rtimes U}^*(U)} \cong {^QK_{G \rtimes M}^*(M)} \]
for some $G \rtimes M$-stable projective bundle $Q$ on $M$. In the proof of corollary 2.20, we saw that if $H$ is a locally 
universal representation of $\G$, then $ U \times _{\pi} H $ is a locally universal $\G \rtimes U$-Hilbert bundle. And also 
that if $E$ is a locally universal Hilbert representation of $G$, then $ E \times M $ is a locally universal 
$G \rtimes M $-Hilbert bundle. It follows that if $P$ is a $\G \rtimes U$-stable projective bundle on $U$, then $P$ is a 
$\G$-stable projective bundle on $U$. Similarly, if $Q$ is a $ G \rtimes M$-stable projective bundle on $M$, then $Q$ is a 
$G$-stable projective bundle on $M$.
\newline

\noindent $ {^PK_{\G \rtimes U}^*(U)} = $
\newline

\noindent $ = \text{$(\G \rtimes U)$-extendable sections of $ \Omega ^n Fred'(P)$ over $U$} = $
\newline

\noindent $ = \text{$\G $-extendable sections of $ \Omega ^n Fred'(P)$ over $U$}  = $
\newline

\noindent $ = {^PK_{\G}^*(U)} $
\newline
\newline
 
\noindent $ {^QK_{G \rtimes M}^*(M)} = $
\newline

\noindent $ = \text{$(G \rtimes M)$-extendable sections of $ \Omega ^n Fred'(Q) $ over $M$} = $
\newline

\noindent $ = \text{$G$-sections of $ \Omega ^n Fred'(Q) $ over $M$} = $
\newline

\noindent $ = {^QK_G^*(M)} $
\newline

\noindent Therefore, $ {^PK_{\G}^*(U)} \cong {^QK_G^*(M)} $

\end{proof}

\begin{thm}

If $\G$ is a Bredon-compatible finite Lie groupoid, the Bott homomorphism
\[ b = b(X,P) : {^PK_{\G}^{-n}(X)} \longrightarrow {^PK_{\G}^{-n-2}(X)} \]
is an isomorphism for any finite $\G$-CW-complex $X$, all $\G$-stable projective bundles on $X$ and all $n \geq 0 $.

\end{thm}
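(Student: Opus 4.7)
The plan is to mimic the proof of Theorem 2.22 (Bott periodicity for untwisted representable $K$-theory) almost verbatim, now carrying the twist $P$ along through the induction.

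First I would argue by induction on the number of cells of $X$. Write $X = Y \cup_{\phi}(U \times D^m)$, where $U \times D^m$ is a $\G$-cell, and set $P_Y = (j_Y)^*P$, together with analogous restrictions to $U \times D^m$ and to the attaching sphere $U \times S^{m-1}$. The inductive hypothesis is that $b(Y, P_Y)$ is an isomorphism; the base case $X = \emptyset$ is trivial.

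Next I would handle the cell pieces. By Corollary 3.10, for any $\G$-stable projective bundle on $U \times D^m$ (respectively $U \times S^{m-1}$) there is a compact Lie group $G$ acting on a finite $G$-CW-complex $M$ (respectively $M \times S^{m-1}$) and a $G$-stable projective bundle $Q$ with
\[
{}^{P}K_{\G}^{*}(U \times D^m) \cong {}^{Q}K_{G}^{*}(M \times D^m), \qquad {}^{P}K_{\G}^{*}(U \times S^{m-1}) \cong {}^{Q}K_{G}^{*}(M \times S^{m-1}).
\]
The Bott map is defined fibrewise from the canonical $\G$-map $\Omega^n_X Fred'(P) \to \Omega^{n+2}_X Fred'(P)$, which is pulled back from the analogous map on the bundle over $G_0$; this construction is natural under the local equivalence of Proposition 2.17 and Corollary 3.9, so the displayed isomorphisms intertwine $b(-, P)$ with the classical twisted Bott map of Atiyah--Segal. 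Hence $b(U \times D^m, P)$ and $b(U \times S^{m-1}, P)$ are isomorphisms by the twisted equivariant Bott periodicity theorem for compact Lie group actions.

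Finally I would plug everything into the Mayer--Vietoris sequence of Lemma 3.7 applied to the pushout $X = Y \cup_{U \times S^{m-1}} (U \times D^m)$, with compatible twists $P_Y$, $P|_{U \times D^m}$, $P|_{U \times S^{m-1}}$. The Bott map is natural with respect to the maps $j_1^* \oplus j_2^*$ and $i_1^* - i_2^*$ (because it is induced by a map of bundles), and it commutes with the connecting homomorphisms $d^{-n}$ since those are built from the same suspension/loop construction that defines $b$. Comparing the two Mayer--Vietoris sequences via $b$ and applying the five-lemma yields that $b(X, P)$ is an isomorphism, completing the induction. The relative statement follows from the definition of the relative groups exactly as in Theorem 2.22.

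The main obstacle is the naturality step: one has to check that the fibrewise Bott map commutes with the pullback along the local equivalence $\G \rtimes U \to G \rtimes M$ so that the reduction to the compact Lie group case genuinely transports $b(X, P)$ to the classical twisted Bott map. Once that bookkeeping is in place, the rest is a routine five-lemma argument identical in form to the untwisted case.
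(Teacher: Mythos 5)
Your proposal follows the paper's own proof essentially verbatim: induction over cells using the decomposition $X = Y \cup_{\phi}(U \times D^m)$, reduction of the cell pieces to twisted equivariant $K$-theory for compact Lie groups via the weak equivalence of Corollary 3.10, invocation of Atiyah--Segal twisted Bott periodicity there, and a five-lemma argument with the Mayer--Vietoris sequence. The naturality bookkeeping you flag as the main obstacle is exactly the point the paper asserts without further elaboration, so your write-up is, if anything, slightly more careful than the original.
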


\begin{proof}

Assume that $ X = Y \cup _{\phi} (U \times D^m) $ where $U \times D^m$ is a $\G$-cell. Let $P$ be a $\G$-stable
projective bundle. Assume inductively that $b(Y,P|Y)$ is
an isomorphism. Since $ ^{P|(U \times D^m)}K_{\G}^{-n}(U \times D^m) \cong {^QK_G^{-n}( M \times D^m)} $ and 
$ {^{P|(U \times S^{m-1})}K_{\G}^{-n}(U \times S^{m-1})} \cong {^{Q|(M \times S^{m-1})}K_G^{-n}( M \times S^{m-1})} $, 
the Bott homomorphisms $b(U \times S^{m-1},P_{|(U \times S^{m-1})})$ and 
$b(U \times D^m,P_{|(U \times D^m)})$ are isomorphisms by the Bott periodicity theorem in twisted equivariant $K$-theory for 
actions of compact Lie groups \cite{AS2}. The Bott map is natural and compatible with the boundary operators in the 
Mayer-Vietoris sequence for $Y$, $X$, $U \times S^{m-1}$ and $ U \times D^m $ and so $b(X,P)$ is an isomorphism by the 
$5$-lemma.
\end{proof}

\noindent Based on the Bott isomorphism we just proved, we can now redefine for all $n \in \Z $
\[ ^PK_{\G}^n(X) = \left \{ \begin{array}{cc}
^PK_{\G}^0(X) & \text{if $n$ is even} \\
^PK_{\G}^{-1}(X) & \text{if $n$ is odd} \end{array} \right. \]
We can collect all the information we have so far about $\G$-equivariant $K$-theory in the following theorem:

\begin{thm}

If $\G $ is a Bredon-compatible finite Lie groupoid, the groups ${^PK_{\G }^n(X)} $ define a $ \Z /2 $-graded cohomology 
theory on the category of finite $\G$-CW-complexes with $\G$-stable projective bundles, which is a module over untwisted
$\G$-equivariant $K$-theory.  

\end{thm}

Note that for a general Lie groupoid $\G$, $K_{\G }^*(-) $ is a cohomology theory on the category of 
$\G$-spaces, but it is not clear whether we have Bott periodicity.
\newline

All constructions and results in this section are true if we relax the condition of $\G$ finite to $\G$ admitting a locally universal
$\G$-representation.

\section{The completion theorem}

\noindent For any $\G$-stable projective bundle $P$ on a $\G$-space $X$, consider the $\G$-stable projective bundle 
$ P \times _{\pi } E^n\G $ on $ X \times _{\pi} E^n\G $. The following diagram commutes:
\newline
\diagram
P \times _{\pi } E^n\G & \rTo & P \\
\dTo & & \dTo \\
X \times _{\pi} E^n\G & \rTo & X \\
\enddiagram
Therefore we have a map:
\[ ^PK_{\G }^* (X) \longrightarrow {^{P \times _{\pi } E^n\G}K_{\G }^* (X \times _{\pi } E^n\G) } \]
$^PK_{\G }^* (X)$ is a module over $K_{\G}^*(G_0)$ and ${^{P \times _{\pi } E^n\G}K_{\G }^* (X \times _{\pi } E^n\G) }$ is a module
over $ K_{\G}^*(E^n\G) $. In fact we have a commutative diagram:
\diagram
K_{\G}^*(G_0) & \rTo & K_{\G}^*(E^n\G) \\
\dTo & & \dTo \\
^PK_{\G }^* (X) & \rTo & {^{P \times _{\pi } E^n\G}K_{\G }^* (X \times _{\pi } E^n\G) } \\
\enddiagram
\noindent From \cite{C}, we know that the last map factors through $I_{\G}^n $ and therefore, by naturality we have a map:
\[ ^PK_{\G }^* (X)/I_{\G}^n{^PK_{\G }^* (X)} \longrightarrow {^{P \times _{\pi } E^n\G}K_{\G }^* (X \times _{\pi } E^n\G) } \]
\noindent We can also look at these maps as a map of pro-$K_{\G}^*(G_0)$-modules:
\[ \{ ^PK_{\G }^* (X)/I_{\G}^n{^PK_{\G }^* (X)} \} \longrightarrow \{ {^{P \times _{\pi } E^n\G}K_{\G }^* (X \times _{\pi } E^n\G) } \} \]
\noindent Taking limits we obtain a map of $K_{\G}^*(G_0)$-modules:
\[  ^PK_{\G }^* (X)_{I_{G}}^{\wedge} \longrightarrow  {^{P \times _{\pi } E\G}K_{\G }^* (X \times _{\pi } E\G) } \]

\begin{conj}

Let $\G$ be a finite Lie groupoid, $X$ a $\G$-space and $P$ a $\G$-stable projective bundle. Then we have an isomorphism of $K_{\G}^*(G_0)$-modules: 
\[  ^PK_{\G }^* (X)_{I_{G}}^{\wedge} \longrightarrow  {^{P \times _{\pi } E\G}K_{\G }^* (X \times _{\pi } E\G) } \]
\end{conj}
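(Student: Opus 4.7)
The plan is to prove this under the hypotheses of the theorem statement given in the introduction (Bredon-compatible $\G$, finite $\G$-CW-complex $X$), matching the finiteness assumptions under which the other results in the paper hold. The strategy is induction on the number of $\G$-cells of $X$: reduce the cell case to the compact Lie group version of the completion theorem, then glue via Mayer--Vietoris at the pro-module level.

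\textbf{Cell case.} On a single $\G$-cell $U$, Bredon-compatibility and finiteness of $\G$ yield a weak equivalence $\G \rtimes U \simeq G \rtimes M$ for some compact Lie group $G$ and finite $G$-CW-complex $M$. Proposition 3.8 and Corollary 3.10 pull back $P|_U$ to a $G$-stable projective bundle $Q$ on $M$ with ${^P K_\G^*(U)} \cong {^Q K_G^*(M)}$. The same equivalence identifies $U \times_\pi E\G$ with $M \times EG$, identifies $(P \times_\pi E\G)|_U$ with $Q \times EG$, and sends $I_\G \subset K_\G^*(G_0)$ to $I_G \subset R(G)$. Under these identifications the natural map reduces to the Atiyah--Segal-style map treated by Lahtinen in \cite{La}, which is an isomorphism.

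\textbf{Inductive step.} Write $X = Y \cup_\phi (U \times D^m)$ with $Y$ a subcomplex having fewer $\G$-cells and $U \times D^m$ the top cell. Lemma 3.6 supplies a Mayer--Vietoris exact sequence in ${^P K_\G^*}$ for the pushout $X = Y \cup_{U \times S^{m-1}} (U \times D^m)$, and the same lemma applied fiberwise over $E\G$ provides the analogous sequence for $X \times_\pi E\G$. I would formulate the comparison at the pro-module level and prove the stronger pro-isomorphism
\[ \{ {^P K_\G^*(X)}/ I_\G^n {^P K_\G^*(X)} \} \longrightarrow \{ {^{P \times_\pi E^n\G} K_\G^*(X \times_\pi E^n \G)} \}, \]
since pro-systems interact more cleanly with long exact sequences than their limits do. A pro-version of the 5-lemma, applied to the completed Mayer--Vietoris sequence and its Borel counterpart, combines the cell case with the inductive hypothesis on $Y$ to yield the result for $X$; taking limits then recovers the stated isomorphism of $K_\G^*(G_0)$-modules.

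\textbf{Main obstacle.} The principal difficulty is verifying that the Mayer--Vietoris sequence for $\{ X \times_\pi E^n \G \}$ is pro-exact and that this pro-exactness matches the $I_\G$-adic completion of the Mayer--Vietoris sequence for $X$ term-by-term. Both amount to controlling $\lim^1$, for which one needs a Mittag--Leffler or finite-generation property on the pro-systems involved. Cell-by-cell these properties transfer from the compact Lie group case (where they follow from Lahtinen's work together with standard finiteness of twisted equivariant $K$-theory on finite $G$-CW-complexes), but this is precisely the point at which the finite-$\G$-CW-complex and Bredon-compatibility hypotheses are essential; without them one cannot reduce to cells, and the conjecture in the full generality stated will likely require genuinely new input beyond the inductive scheme above.
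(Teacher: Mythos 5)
Your proposal is correct in substance and, for the cell case, coincides with the paper's argument: the paper also reduces a $\G$-cell to a compact Lie group action via weak equivalence (its Lemma 4.3 and Theorem 4.4 establish invariance of the twisted completion theorem under local and weak equivalence, and Corollary 4.5 then invokes Lahtinen's twisted Atiyah--Segal theorem exactly as you do). You also correctly read the statement: as written it is a conjecture for arbitrary finite Lie groupoids and arbitrary $\G$-spaces, and the paper itself only proves it under the Bredon-compatibility and finite-$\G$-CW hypotheses you adopt. Where you diverge is in the assembly step. The paper does not induct cell by cell; it compares the spectral sequences associated to $f : X \to X/\G$ for the $I_\G$-completed theory and for the theory of $X \times_\pi E\G$ (via $h : X \times_\pi E\G/\G \to X/\G$), observing that $K_\G^*(G_0)$ is Noetherian, all terms are finitely generated, and hence $I_\G$-adic completion is exact and commutes with forming the completed spectral sequence; the map of spectral sequences is then an isomorphism on $E_1$ because each $E_1$-term is a product over cells. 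Your Mayer--Vietoris induction with a pro-module formulation and a pro-$5$-lemma is an equivalent packaging of the same finite-generation input: the Mittag--Leffler/$\lim^1$ concerns you flag are dispatched in the paper precisely by the exactness of completion on finitely generated modules over the Noetherian coefficient ring, so your ``main obstacle'' is real but already resolved by the same hypotheses you impose. Your version is arguably more careful about where pro-exactness is needed; the paper's version avoids the pro-$5$-lemma at the cost of setting up two spectral sequences.
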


If a groupoid $\G$ satisfies this conjecture for $ X = G_0 $ and all $\G$-stable projective bundles on $G_0$, we will say $\G$ 
satisfies the twisted completion theorem.

\begin{lemma}

Let $\G = G \rtimes X $, where $G$ is a compact Lie group and $X$ is a compact $G$-space such that $K_G^*(X)$ is finite
over $R(G)$. Then $\G$ satisfies the twisted completion theorem. 

\end{lemma}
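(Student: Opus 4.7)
The plan is to reduce the claim directly to the Lahtinen twisted completion theorem for compact Lie group actions. First, since $\G = G \rtimes X$ is an action groupoid with $G_0 = X$ a compact $G$-space, the various objects identify as follows: $\G$-Hilbert bundles on $G_0$ are precisely $G$-equivariant Hilbert bundles on $X$, the locally universal $\G$-Hilbert bundle $U(\G)$ corresponds to a locally universal $G$-Hilbert bundle on $X$ (as observed in the proof of Corollary 2.20), and hence $\G$-stable projective bundles on $G_0$ coincide with $G$-stable projective bundles on $X$. Under these identifications, ${^PK_\G^*(G_0)} \cong {^PK_G^*(X)}$. Moreover, $E\G$ identifies with $EG \times X$ carrying the diagonal $G$-action, so $G_0 \times_\pi E\G = X \times EG$, and the bundle $P \times_\pi E\G$ becomes the pullback of $P$ along the projection $X \times EG \to X$.

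Second, I would identify the augmentation ideal $I_\G \subset K_\G^*(G_0) = K_G^*(X)$ with the extension to $K_G^*(X)$ of the classical augmentation ideal $I_G \subset R(G)$ along the ring map $R(G) = K_G^*(\mathrm{pt}) \to K_G^*(X)$ induced by $X \to \mathrm{pt}$; this is the standard description of $I_\G$ used for action groupoids in \cite{C}. I would also check that the map of pro-modules constructed in the diagram preceding the conjecture agrees, under the identifications above, with the natural map to the Borel-construction twisted $K$-theory that appears in \cite{La}.

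Third, I would apply Lahtinen's twisted completion theorem \cite{La} to the compact Lie group $G$ acting on the finite $G$-CW-complex $X$ with twisting $P$, obtaining an isomorphism
\[ {^PK_G^*(X)^\wedge_{I_G}} \;\longrightarrow\; {^{P \times EG}K_G^*(X \times EG)}. \]
Finally, I would use the hypothesis that $K_G^*(X)$ is finite over $R(G)$ together with the fact that the twisted theory $^PK_G^*(X)$ is a finitely generated module over $K_G^*(X)$ (which follows by the Mayer-Vietoris induction over cells of $X$, using Lemma 3.6 and the compact-group case where the statement is classical) to conclude that the $I_\G$-adic and $I_G$-adic topologies on $^PK_G^*(X)$ coincide. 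Combining this with the identifications of the first two steps yields the desired isomorphism of $K_\G^*(G_0)$-modules.

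The main obstacle is the comparison in the last step: one must confirm that the finiteness of $K_G^*(X)$ over $R(G)$ genuinely propagates to finiteness of the twisted module $^PK_G^*(X)$ over the same base ring, so that a Noetherian Artin-Rees argument aligns the $I_\G$-adic and $I_G$-adic completions. Once this is in place, everything else is formal bookkeeping translating between the groupoid language of the paper and the equivariant language of \cite{La}.
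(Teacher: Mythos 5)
Your proposal is correct and takes essentially the same route as the paper, whose entire proof is a citation of Lemma 5.1 in \cite{C} (which supplies exactly the identifications of the action-groupoid data with $G$-equivariant data on $X$ and the comparison of the $I_{\G}$-adic and $I_G$-adic topologies under the finiteness hypothesis) together with Lahtinen's twisted completion theorem \cite{La}. You have simply unpacked the content of that citation.
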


\begin{proof}

It follows from lemma 5.1 in \cite{C} and the completion theorem for twisted equivariant $K$-theory for actions of compact
Lie groups \cite{La}
\end{proof}

\begin{lemma}

If $\G$ and $\H$ are locally equivalent, then $\G$ satisfies the twisted completion theorem if and only if $\H$ does.

\end{lemma}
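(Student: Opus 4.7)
The plan is to transfer the twisted completion theorem across the local equivalence $F: \G \to \H$ by setting up a commutative square in which the vertical arrows are isomorphisms induced by pullback, so that the top horizontal completion map is an isomorphism if and only if the bottom one is. By symmetry (running $F$ in either direction on the level of the bicategory of Lie groupoids) it suffices to show one direction.

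First, I would observe that by Proposition 3.7, pullback along $F$ induces an equivalence of categories from $\H$-stable projective bundles on $H_0$ to $\G$-stable projective bundles on $G_0$. So fixing an $\H$-stable projective bundle $P$ on $H_0$ and letting $Q = F^*P$, every $\G$-stable projective bundle on $G_0$ is isomorphic to such a $Q$ and it is enough to verify the equivalence of the two completion statements for the pair $(P, Q)$. By Proposition 3.9, pullback gives an isomorphism $F^* : {^PK_{\H}^*(H_0)} \to {^{Q}K_{\G}^*(G_0)}$, and the untwisted version from \cite{C} gives $F^* : K_\H^*(H_0) \to K_\G^*(G_0)$ identifying the augmentation ideals $I_\H$ and $I_\G$ (since the former pulls back to the latter under $F^*$). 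Consequently the $F^*$-isomorphism on twisted $K$-theory is an isomorphism of filtered modules and therefore induces an isomorphism on $I$-adic completions
\[ F^* : {^PK_{\H}^*(H_0)}_{I_\H}^\wedge \stackrel{\cong}{\longrightarrow} {^{Q}K_{\G}^*(G_0)}_{I_\G}^\wedge. \]

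Next, I would promote $F$ to a local equivalence $\tilde F$ between the action groupoids on the Borel constructions. Namely, since $F$ induces a $\G$-equivariant weak equivalence between $G_0 \times_\pi E\G$ and the pullback of $H_0 \times_\pi E\H$, the resulting map $\G \ltimes (G_0 \times_\pi E\G) \to \H \ltimes (H_0 \times_\pi E\H)$ is again a local equivalence, and it sends the twist $P \times_\pi E\H$ to (an isomorphic copy of) $Q \times_\pi E\G$. Applying Proposition 3.9 once more yields an isomorphism
\[ F^* : {^{P \times_\pi E\H}K_{\H}^*(H_0 \times_\pi E\H)} \stackrel{\cong}{\longrightarrow} {^{Q \times_\pi E\G}K_{\G}^*(G_0 \times_\pi E\G)}. \]
Naturality of the completion maps with respect to the morphism of groupoids $F$ (and with respect to the canonical maps from the base to its Borel construction) then gives a commutative square whose horizontal arrows are the two completion maps and whose vertical arrows are these two isomorphisms. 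The top arrow is an isomorphism exactly when the bottom one is, which is exactly the desired equivalence.

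The main obstacle, and the point that requires the earlier developments most directly, is verifying that the augmentation ideals match under the local equivalence so that the $I$-adic completions of the two twisted modules are canonically identified; this is what allows one to pass from an isomorphism of underlying $K_\G^*(G_0)$-modules (given by Proposition 3.9) to an isomorphism of completions. Once the correspondence $I_\G \leftrightarrow I_\H$ and the local equivalence $\tilde F$ of the Borel groupoids are in place, the remaining verifications, namely commutativity of the square and naturality of the completion maps, are formal consequences of the definitions.
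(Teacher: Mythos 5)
Your proposal is correct and follows essentially the same route as the paper: pull the twist back along the local equivalence, use the pullback isomorphisms on twisted $K$-theory together with the identification of the $I_{\G}$- and $I_{\H}$-adic topologies from the untwisted theory to identify the completions, promote $F$ to a local equivalence of the Borel-construction groupoids carrying $P\times_{\pi}E\H$ to $F^*(P)\times_{\pi}E\G$, and conclude by a commutative square. The only cosmetic difference is that the paper phrases the completion step through the pro-systems $\{E^n\G\}$ and passes to the limit, while you complete the filtered modules directly; you are also slightly more careful than the paper in invoking the essential surjectivity of $F^*$ on stable projective bundles so that checking pairs $(P,F^*P)$ suffices.
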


\begin{proof} Let $P$ be a $\H$-stable projective bundle on $H_0$. Then, $F^*(P)$ is $\G$-stable. If $\G$ and 
$\H$ are locally equivalent by a local equivalence $ F : \H \longrightarrow \G $, then we have an isomorphism 
$ f : {^{F^*P}K_{\G}^*(G_0)} \stackrel{\cong}{\longrightarrow} {^PK_{\H}^*(H_0)} $. The following diagram is commutative: 
\diagram
K_{\G}^*(G_0) & \rTo^{\cong} & K_{\H}^*(H_0) \\
\dTo & & \dTo \\
{^{F^*P}K_{\G}^*(G_0)} & \rTo^{\cong} & {^PK_{\H}^*(H_0)} \\
\enddiagram
By lemma 5.2 in \cite{C}, the topologies induced by $I_{\G}$ and $I_{\H} $ are the same and therefore we have an 
isomorphism of pro-rings 
$ \{ {^{F^*P}K_{\G}^*(G_0)}/I_{\G}^n{^{F^*P}K_{\G}^*(G_0)} \} \cong \{ {^PK_{\H}^*(H_0)}/I_{\H}^n{^PK_{\H}^*(H_0)} \} $.
\newline

The local equivalence also induces a local equivalence between the groupoids $\G \rtimes E\G$ and $\H \rtimes E\H$ and 
it takes $ E^n\H$ to $ E^n\G $. Hence we have an homomorphism of pro-rings 
$ \{ {^{F^*(P \times _{\pi} E^n\H)}K_{\G}^*(E^n\G)} \} \cong \{ {^{P \times _{\pi} E^n\H}K_{\H}^*(E^n\H)} \} $, which
is an isomorphism in the limit. We also have $ F^*(P \times _{\pi} E^n\H) = F^*(P) \times _{\pi} E^n\G $. We have a commutative
diagram:
\diagram
\{ {^{F^*P}K_{\G}^*(G_0)}/I_{\G}^n{^{F^*P}K_{\G}^*(G_0)} \} & \rTo^{\cong} & \{ {^PK_{\H}^*(H_0)}/I_{\H}^n{^PK_{\H}^*(H_0)} \} \\
\dTo & & \dTo \\
\{ {^{F^*(P) \times _{\pi} E^n\G}K_{\G}^*(E^n\G)} \} & \rTo & \{ {^{P \times _{\pi} E^n\H}K_{\H}^*(E^n\H)} \} 
\enddiagram
The lemma follows then by looking at the diagram:
\diagram
{^{F^*P}K_{\G}^*(G_0)_{I_{\G}}^{\wedge}} & \rTo^{\cong} & {^PK_{\H}^*(H_0)_{I_{\H}}^{\wedge}} \\
\dTo & & \dTo \\
{^{F^*(P) \times _{\pi} E\G}K_{\G}^*(E\G)} & \rTo^{\cong} & {^{P \times _{\pi} E\H}K_{\H}^*(E\H)}   \qedhere \\ 
\enddiagram  
\end{proof}

\noindent From the previous lemma, we obtain the following theorem:

\begin{thm}

If $\G$ and $\H$ are weakly equivalent, then $\G$ satisfies the twisted completion theorem if and only if $\H$ does.

\end{thm}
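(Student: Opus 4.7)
The plan is to reduce the weak equivalence case to the local equivalence case, which is precisely the content of the preceding lemma. The key fact I would invoke is the standard characterization of weak equivalence of Lie groupoids: two Lie groupoids $\G$ and $\H$ are weakly equivalent if and only if there is a third Lie groupoid $\mathcal{K}$ together with local equivalences $F_1 : \mathcal{K} \longrightarrow \G$ and $F_2 : \mathcal{K} \longrightarrow \H$ (that is, weak equivalences factor through a span of local equivalences, up to Morita equivalence). This is the same reduction used implicitly in the untwisted setting of \cite{C}, and the earlier propositions in the paper (e.g.\ Proposition 2.8 and Proposition 2.17) are stated in terms of local equivalences precisely so that they propagate along such spans.

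First, given a weak equivalence between $\G$ and $\H$, I would produce the span $\G \stackrel{F_1}{\longleftarrow} \mathcal{K} \stackrel{F_2}{\longrightarrow} \H$ of local equivalences. Then I would apply the previous lemma to the local equivalence $F_1$ to conclude that $\G$ satisfies the twisted completion theorem if and only if $\mathcal{K}$ does, and apply it again to $F_2$ to conclude that $\mathcal{K}$ satisfies it if and only if $\H$ does. Composing these two biconditionals yields the desired statement.

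The only mildly nontrivial point is checking that the notion of ``weakly equivalent'' used in this paper is indeed the one that admits such a span presentation; since all groupoids under consideration are Lie groupoids and the paper has been working with local equivalences throughout, this is the standard convention and no further work is required. In particular, the compatibility of $I_\G$-adic topologies transported along local equivalences, and the compatibility of the pullbacks $F^*(P \times_\pi E^n\G)$ with the fiber product constructions, was already verified in the proof of the previous lemma and applies verbatim in each of the two legs of the span. Hence the theorem follows at once, and no genuine obstacle arises beyond invoking the span decomposition of weak equivalence.
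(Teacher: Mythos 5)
Your proposal is correct and matches the paper's (essentially unstated) argument: the paper simply remarks that the theorem follows ``from the previous lemma,'' and the intended reasoning is exactly your span decomposition of a weak equivalence into two local equivalences $\G \leftarrow \mathcal{K} \rightarrow \H$, with the lemma applied to each leg. You have merely made explicit what the paper leaves implicit, so no further comment is needed.
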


\noindent Now from this theorem, lemma 4.2 and the results in \cite{La}, we obtain this corollary:

\begin{cor}

If $\G$ is a Bredon-compatible finite Lie groupoid, $U$ is a $\G$-cell and $P$ is a stable $\G$-projective bundle on $U$, $^PK_{\G}^*(U) $ is a 
finitely generated abelian group and the groupoid $\G \rtimes U $ satisfies the completion theorem.

\end{cor}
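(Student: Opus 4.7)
The plan is to reduce everything to the compact Lie group case via the weak equivalence built into the definition of a $\G$-cell, then invoke the results already assembled in the section, namely Corollary 3.8, Lemma 4.2 and Theorem 4.5. Since $U$ is a $\G$-cell, the definition of a $\G$-cell and Bredon-compatibility gives a weak equivalence $\G \rtimes U \simeq G \rtimes M$ for some compact Lie group $G$ and some finite $G$-CW-complex $M$. Corollary 3.8 then supplies an isomorphism $ {^PK_{\G}^*(U)} \cong {^QK_G^*(M)} $ for a suitable $G$-stable projective bundle $Q$ on $M$, so the problem splits into two pieces: finite generation of $ {^QK_G^*(M)} $ and the twisted completion theorem for $\G \rtimes U$.

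For the first piece, I would argue by induction on the cells of $M$. The base case is an orbit $G/H$, for which \cite{La} (together with the Mayer--Vietoris sequence of Lemma 3.5) shows that ${^QK_G^*(G/H)}$ is a finitely generated $R(H)$-module; since $R(H)$ is finitely generated over $\Z$, the group is finitely generated as an abelian group. The inductive step uses Mayer--Vietoris for $M = Y \cup_{\phi} (G/H \times D^m)$ and the fact that finite generation is preserved under extensions. Transporting back along the isomorphism of Corollary 3.8 then gives finite generation of ${^PK_{\G}^*(U)}$.

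For the second piece, I note that finite generation of $K_G^*(M)$ as an $R(G)$-module is a standard consequence of the same Mayer--Vietoris induction applied in the untwisted case, using that $R(G)$ is Noetherian and each cell contributes $R(H) \cong K_G^*(G/H)$, which is finite over $R(G)$. Hence Lemma 4.2 applies to $G \rtimes M$, so $G \rtimes M$ satisfies the twisted completion theorem. Finally, Theorem 4.5 transports this conclusion across the weak equivalence $\G \rtimes U \simeq G \rtimes M$, yielding the twisted completion theorem for $\G \rtimes U$.

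The main obstacle I expect is keeping track of the twisting $Q$ along the inductive steps: one must ensure that the Mayer--Vietoris sequence is natural with respect to the restriction of the projective bundle to the cells, and that the isomorphisms of Corollary 3.7 and Corollary 3.8 are compatible with these restrictions. Once that bookkeeping is done, both assertions follow by stringing together the cited results; the reduction to the compact Lie group case and the invocation of \cite{La} and Lemma 4.2 are formal.
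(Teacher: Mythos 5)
Your proposal follows the paper's route exactly: the paper derives this corollary in a single line from the weak-equivalence invariance of the twisted completion theorem (the theorem immediately preceding it), Lemma 4.2, and the results of \cite{La}, which is precisely your reduction across the weak equivalence $\G \rtimes U \simeq G \rtimes M$ followed by the observation that $K_G^*(M)$ is finite over $R(G)$. The one caveat is your step ``$R(H)$ is finitely generated over $\Z$,'' which holds only for finite $H$; this mirrors an imprecision already present in the corollary's own claim of finite generation as an abelian group, since for positive-dimensional stabilizers what the subsequent spectral-sequence argument actually needs (and what \cite{La} supplies) is finite generation as a $K_{\G}^*(G_0)$-module.
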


This corollary tells us that the twisted completion theorem is true for $\G$-cells. Now we move on to prove this for finite
$\G$-CW-complexes.

Let $X$ be a finite $\G$-CW-complex and $P$ a $\G$-stable projective bundle on $X$. Consider the spectral sequence for the 
maps $ f : X \rightarrow X / \G $ in twisted $\G $-equivariant $K$-theory with twisting
given by the restrictions of $P$:
\[  E_1 ^{pq} = \mathop{\mathop{\prod}}   _{i \in I_p} {^{f^*(P_i)}K_{\G }^q (f^{-1}U_i)} \Longrightarrow {^PK_{\G }^{p+q} (X)} \]
The spectral sequence $E$ is a spectral sequence of $ K_{\G}^*(G_0) $-modules. Assume $\G$ is a finite groupoid \cite{C} so that
$ K_{\G}^*(G_0) $ is a Noetherian ring. All elements in these spectral sequences are finitely generated. The functor taking a 
$ K_{\G}^*(G_0)$-module $M$ to the $ K_{\G}^*(G_0)$-module $ M_{I_{\G}}^{\wedge} $ is exact \cite{LO} and so we can 
form the following spectral sequence of $ K_{\G}^*(G_0)$-modules. 
\[  F_1 ^{pq} = \mathop{\mathop{\prod}}   _{i \in I_p} {^{Q_i}K_{\G }^q (f^{-1}U_i)_{I_{\G}}^{\wedge} }
\Longrightarrow  {^PK_{\G }^{p+q} (X)_{I_{\G}}^{\wedge}}  \]
where $Q_i = f^*(P_i) $. Similarly consider the map $ h : X \times _{\pi} E\G / \G \longrightarrow X/\G $. It gives us another 
spectral sequence of $ K_{\G}^*(G_0)$-modules:
\[  \bar{F}_1 ^{pq} =  \mathop{\mathop{\prod}}   _{i \in I_p} {^{Q_i \times _{\pi} E\G}K_{\G }^q (h^{-1}U_i)}
\Longrightarrow  {^{P \times _{\pi} E\G}K_{\G }^{p+q} (X \times _{\pi} E\G)} \]
since $ h^*(P_i) = Q_i \times _{\pi } E\G $. We have $h^{-1}(U) = f^{-1}(U) \times _{\pi } E\G $ 
so there is a map of spectral sequences $ F \rightarrow \bar{F} $ induced by the projections $ f^{-1}(U) \times _{\pi } E\G 
\rightarrow f^{-1}(U) $. Let us denote 

If $\G$ is Bredon-compatible, the groupoids $ \G \rtimes f^{-1}(U_i) $ satisfy the twisted completion theorem for all $i$. From
\cite{C}, we know that the topologies determined by the groupoid $\G$ and $\G \rtimes U_i $ on $K_{\G}^*(f^{-1}U_i) $ are the same
and therefore they are the same on $^{Q_i}K_{\G}^*(f^{-1}(U_i)) $.

This proves $\phi$ is an isomorphism when restricted to any particular element $F^{ij}$ and therefore, it is an isomorphism
of spectral sequences. In particular, we have $ {^PK_{\G }^{p+q} (X)_{I_{\G}}^{\wedge}} \cong  
{^{P \times _{\pi} E\G}K_{\G }^{p+q} (X \times _{\pi} E\G)} $

\begin{thm}
Let $\G$ be a Bredon-compatible finite Lie groupoid, $X$ a finite $\G$-CW-complex and $P$ a $\G$-stable projective bundle on $X$. 
Then we have an isomorphism of $K_{\G}^*(G_0)$-modules: 
\[ {^PK_{\G }^n (X)_{I_{\G}}^{\wedge}} \longrightarrow {^{P \times _{\pi} E\G}K_{\G }^n (X \times _{\pi} E\G)} \]
\end{thm}

\section{Proper actions}

\noindent Throughout this whole section $S$ will be a Lie group, but not necessarily compact. To study proper actions of $S$, we
can consider the groupoid $\G = S \rtimes \underline{E}S$, where $\underline{E}S$ is the universal space for proper actions of 
$S$ as defined in \cite{Lu}. This space is a proper $S$-CW-complex such that $\underline{E}S^{G} $ is contractible for all 
compact Lie subgroups $G$ of $S$. The existence of $\underline{E}S$ is shown in \cite{Lu}. It is also shown there that every 
proper $S$-CW-complex has an $S$-map to $\underline{E}S$ and this map is unique up to $S$-homotopy. Some immediate consequences 
follow:

\begin{itemize}

\item Proper $S$-CW-complexes are $\G$-CW-complexes.

\item $\G $ is finite if and only if $\underline{E}S $ is a finite proper $S$-CW-complex.

\item Extendable $\G$-sections on a proper $S$-CW-complex $X$ are extendable $S$-sections for any $S$-map 
$ X \longrightarrow \underline{E}S $, since all of them are $S$-homotopic.

\item If $H$ is a locally universal $S$-Hilbert representation, then $ \underline{E}S \times H $ is a locally
universal $\G$-Hilbert bundle.

\item Stable $\G$-projective bundles on $X$ are stable $S$-projective bundles on $X$.

\end{itemize}

By abuse of language, we say that proper actions of $S$ are Bredon-compatible if the corresponding groupoid $\G = S \rtimes \underline{E}S$
is Bredon-compatible.

\begin{example}

Twisted $K$-theory for actions of finite groups and compact Lie groups was defined and studied in \cite{AS2}. It is
a well-known fact that for these actions, $\G $ is Bredon-compatible \cite{SG3}. A completion theorem was recently proven in
\cite{La}.

\end{example}

\begin{example}

Twisted $K$-theory for actions of discrete groups for some particular twistings was defined in \cite{D}. These actions are 
also Bredon-compatible, as shown in \cite{LO}. Therefore the constructions in this paper provide a model for twisted $K$-theory
for actions of discrete groups for any possible twisting, and a new completion theorem.

\end{example}

\begin{example}

In general, vector bundles may not be enough to construct an interesting equivariant cohomology theory for proper actions of
second countable locally compact groups \cite{Ph}, but they suffice for two important families, almost compact groups and matrix
groups \cite{Ph2}. Actions of these two families of groups are Bredon-compatible \cite{Ph2}. Using the associated groupoids,
we now have twisted $K$-theory for actions of these groups and a completion theorem.

\end{example}

\begin{example}

Proper actions of totally disconnected groups that are projective limits of discrete groups are shown to be Bredon-compatible
in \cite{S}. The results in this paper show a way of defining twisted $K$-theory and a corresponding completion theorem. 

\end{example}

\begin{example}

In general $\G$ need not be a Bredon-compatible groupoid. When $ \G $ is a Bredon-compatible groupoid we must have
$Vect_{\G}(S/G) = Vect_G(pt) $ for a compact subgroup $G$ of $S$. Let $S$ be a Kac-Moody group and $T$ its maximal torus. Note that 
$S$ is not a Lie group, but all constructions generalize.

There is an $S$-map $ S/T \longrightarrow \underline{E}S $ which is unique, up to homotopy. Given an $S$-vector bundle $V$ on 
$\underline{E}S$, the pullback to $S/T$ is given by a finite-dimensional representation of $T$ invariant under the Weyl group.
This representation gives rise to a finite-dimensional representation of $S$. But all finite-dimensional representations of
Kac-Moody groups are trivial, therefore $V$ must be trivial. In particular, extendable $S$-vector bundles on $S/T$ only
come from trivial representations of $T$. 

In order to deal with these groups, it is more convenient to use dominant $K$-theory, which was developed in \cite{K}. Kac-Moody
groups possess an important class of representations called dominant representations. A dominant representation of a Kac-Moody 
group in a Hilbert space is one that decomposes into a sum of highest weight representations. Equivariant 
$K$-theory for proper actions of Kac-Moody groups is defined as the representable equivariant cohomology theory modeled on
the space of Fredholm operators on a Hilbert space which is a maximal dominant representation of the group. It is expected
that twisted dominant $K$-theory can be defined in the same way using a corresponding Fredholm bundle over a projective bundle
which is stable with respect to a suitable Hilbert space of dominant representations.

\end{example}

\section{The category of $\G$-orbits}

Recall from \cite{C} that a $\G$-orbit is a $\G$-space $U$ for which the groupoid $\G \rtimes U$ is weakly equivalent to the 
action of a compact Lie group $G$ on a finite $G$-CW-complex.

The category of $\G$-orbits, written $O_{\G}$ is a topological category with discrete object space formed by the $\G$-orbits. 
The morphisms are the $\G$-maps, with a topology such that the evaluation maps $ Hom_{\G}(U,V) \times U \to V $ are continuous
for all $\G$-orbits $U$, $V$. By an $O_{\G}$-space we shall mean a continuous contravariant functor from $O_{\G}$ to the category
of topological spaces.

\begin{defn}

Let $X$ be a $\G$-space. The fixed point set system of $X$, written $\Phi X $, is an $O_{\G}$-space defined by
$ \Phi X(U) = Map_{\G}(U,X) $ and given $ \Theta : U \to V $, $\Phi X (\Theta)(f) = f \Theta $. We also denote 
$ X^U = Map_{\G}(U,X) $. 

\end{defn}

\begin{defn}

A CW-$O_{\G}$-space is an $O_{\G}$-space $T$ such that each space $T(U)$ is a CW-complex and each structure map
$ T(U) \to T(V) $ is cellular. We will call $T$ regular if it is homotopy equivalent (in the sense detailed below)
to a CW-$O_{\G}$-space.

\end{defn}

\begin{thm}

There is a functor $C : O_{\G}\text{-spaces} \longrightarrow \G \text{-spaces} $ and a natural transformation 
$ \eta : \Phi C \to Id $ such that for each $O_{\G}$-space $T$ and each $U$, $\eta : (CT)^U \rightarrow T(U) $ is a homotopy 
equivalence (it is actually a strong deformation retraction). If $T$ is regular, then $T$ has the $\G$-homotopy type of a 
$\G$-CW-complex.

\end{thm}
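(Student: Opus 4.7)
The approach is to mimic Elmendorf's original construction using a two-sided bar construction (or equivalently a coend) adapted to the groupoid setting. Let $\mathcal{Y}: O_{\G} \to \G\text{-spaces}$ denote the tautological functor sending a $\G$-orbit $U$ to itself, with morphisms acting by composition. For an $O_{\G}$-space $T$, define
\[
CT \;=\; B(T,O_{\G},\mathcal{Y}),
\]
the geometric realization of the simplicial $\G$-space whose $n$-simplices are
\[
\coprod_{U_0,\ldots,U_n} T(U_n)\times O_{\G}(U_{n-1},U_n)\times\cdots\times O_{\G}(U_0,U_1)\times U_0,
\]
with $\G$ acting on the final $U_0$ factor and the usual bar-construction face and degeneracy maps. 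Functoriality of $C$ is immediate from functoriality of $B(-,O_{\G},\mathcal{Y})$.

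Next I would construct $\eta$. For a $\G$-orbit $V$, evaluation of a $\G$-map $f:V\to CT$ at the basepoint together with the simplicial structure produces, after using the relation $\mathrm{Map}_{\G}(V,U)=O_{\G}(V,U)$ for $\G$-orbits $U$, a map $(CT)^V\to T(V)$. On the level of the bar construction, this is the augmentation that collapses the simplicial direction by using $V$ as the rightmost entry. To see that $\eta_V$ is a strong deformation retraction, I would exhibit the standard extra degeneracy: the identity $V\to V$ supplies a simplicial contraction of the bar resolution applied at $V$, giving a section $T(V)\to (CT)^V$ together with a canonical homotopy of the composite to the identity. This is the technical heart of the theorem and is essentially the only place where the hypothesis that $O_{\G}$ has discrete object set and well-behaved mapping spaces really gets used; the rest is formal.

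The next step is to show that if $T$ is a CW-$O_{\G}$-space, then $CT$ is a $\G$-CW-complex. Here I would filter $CT$ by skeleta of the bar construction crossed with skeleta of the $T(U)$'s. Each attaching map is built out of cells of the form (cell of $T(U_n)$)$\times\Delta^n\times U_0$, and after the standard collapsing of degenerate simplices this decomposes $CT$ into $\G$-cells of the form $U_0\times D^k$, exactly the cells required by the definition of a $\G$-CW-complex. Cellularity of the structure maps in $T$ ensures that the attaching maps land in the correct skeleta.

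Finally, for a regular $O_{\G}$-space $T$, choose a homotopy equivalence $T\simeq T'$ with $T'$ a CW-$O_{\G}$-space. The functor $C$ is homotopy invariant (it is a geometric realization of levelwise homotopy equivalent simplicial $\G$-spaces, each level being a disjoint union of products), so $CT\simeq CT'$ as $\G$-spaces. Combining with the deformation retraction $\eta$, one gets $T\simeq \Phi CT\simeq \Phi CT'$, and since $CT'$ is a $\G$-CW-complex by the previous step, $T$ has the $\G$-homotopy type of a $\G$-CW-complex. The main obstacle will be the extra-degeneracy argument in the second paragraph: one must check carefully that the simplicial contraction is compatible with the $\G$-action on the $U_0$ factor and that, after geometric realization, it descends to a genuine strong deformation retraction of $(CT)^V$ rather than just a homotopy equivalence.
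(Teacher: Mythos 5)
Your proposal is correct and follows essentially the same route as the paper: both define $CT$ as the two-sided bar construction $B(T,O_{\G},J)$ with $J$ the tautological/forgetful functor, identify $(CT)^U$ with $B(T,O_{\G},\mathrm{Map}_{\G}(U,-))$ using that $\G$ acts only on the last coordinate, and obtain $\eta$ as the bar augmentation, which is a strong deformation retraction by the standard extra-degeneracy argument (the paper cites May for this). The only difference is cosmetic (reversed indexing of the $U_i$) plus the fact that you also sketch the $\G$-CW part, which the paper's proof leaves implicit.
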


\begin{proof}

We first construct the $\G$-space $CT$. Let $O_T$ denote the topological category whose objects are triples $(U,s,y)$
where $U$ is a $\G$-orbit, $ s \in J(U) \equiv U $ and $ y \in T(U) $. Let us consider the nerve of this category as a 
topological simplicial space. This is the bar complex $B_*(T,O_{\G},J)$, where $J:O_{\G} \to Top $ is the covariant functor
which forgets the $\G$-action.

Then $ B_n(T,O_{\G},J)$ consist of $(n+2)$-tuples $(y,f_1,f_2,\ldots,f_n,s) $ where the $f_i: U_i \to U_{i-1}$ are 
composable arrows in $O_{\G}$, $ s \in J(U_n) \equiv U_n $ and $ y \in T(U_0) $. The boundary maps are given by:
\[ \partial _0 (y,f_1,f_2,\ldots ,f_n,s) = (f_1^*(y),f_2,f_3,\ldots ,f_n,s) \]
\[ \partial _n (y,f_1,f_2,\ldots ,f_n,s) = (y,f_1,f_2,\ldots ,f_{n-1},(f_n)_*(s)) \]
\[ \partial _i (y,f_1,f_2,\ldots ,f_n,s) = (y,f_1,f_2,\ldots , f_{i-1},f_if_{i+1},f_{i+2}, \ldots ,f_n,s) \]
Degeneracies are the insertion of identity maps in the appropriate spots. The groupoid $\G$ acts simplicially on 
$B_*(T,O_{\G},J)$ and consequently the geometric realization $B(T,O_{\G},J)$ is a $\G$-space. We define $CT = B(T,O_{\G},J) $.

We now require the homotopy equivalence $\eta : (CT)^U \rightarrow T(U) $ for each $\G$-orbit $U$, natural in $U$. We have: 
\[ (CT)^U = B(T,O_{\G},J)^U = B(T,O_{\G},Hom_{\G}(U,-)) \]
The second equality follows from the fact that $\G$ acts on the last coordinate only. Now it is a general property of the 
bar construction that for any topological category $C$, contravariant functor $ F : C \to Top $ and object $ A $ of $ C $, there
is a natural map
\[ \eta : B(F,C,Hom_C(A,-)) \longrightarrow F(A) \]
which is a strong deformation retraction. This map is induced by a simplicial map
\[ \eta _* : B_*(F,C,Hom_C(A,-)) \longrightarrow F(A)_* \]
where $F(A)_*$ is the simplicial space all of whose components are $F(A)$ and all whose face and degeneracy maps are the
identity. In our case, $\eta _*$ is given by the formula:
\[ \eta _n(y,f_1,f_2,\ldots ,f_n,f) = (f_1 \circ \ldots \circ f_n \circ f)^*(y) \]
Now $f$ is an element of $Hom_{O_{\G}}(U,U_n) $. The proof that $\eta$ is a strong deformation retraction is a standard
simplicial argument contained in \cite{M}.
\end{proof}

\section{Twistings for groupoid actions}

Now we follow \cite{AS2} closely. Let us write Pic$_{\G}(X) $ for the group of isomorphism classes of complex $\G $-line 
bundles on $X$ (or equivalently, of principal $S^1$-bundles on $X$ with $\G $-action), and Proj$_{\G }(X)$ for the group of 
isomorphism classes of $\G $-stable projective bundles. Applying the Borel construction to line bundles and projective bundles 
gives us homomorphisms: 
\[ Pic_{\G }(X) \longrightarrow Pic(X_{\G}) \cong H_{\G }^2(X;\Z ) \]
\[ Proj_{\G }(X) \longrightarrow Proj(X_{\G }) \cong H_{\G }^3(X;\Z ) \]
which we shall show are bijective.

\begin{defn}

A topological abelian $\G $-module is a $\G $-space such that each of the fibres of its anchor map is a topological abelian
group and the action is linear.

\end{defn}

\begin{example}

Given any topological abelian group $B$, $G_0 \times B $ is a $\G $-module with the anchor map given by projection on the first
coordinate. When there is no danger of confusion we will denote it by $B$.

\end{example}

Let us introduce groups $ H_{\G}^*(X;A) $ defined for any abelian $\G $-module $A$. These are the hypercohomology groups of a
simplicial space $\chi $ whose realization is the space $ X_{\G } $. Whenever a Lie groupoid $\G $ acts on a space $X$ we can
define the action groupoid whose space of objects is $X$ and space of morphisms is $G_1 \times _{\pi } X $. Let $\chi $ be the 
nerve of this groupoid regarded as a simplicial space, that is, $ \chi _p = G_p \times _{\pi} X $, where $G_p$ is the space
of composable $p$-tuples of arrows in $\G$.

For any simplicial space with an action of a Lie groupoid $\G $ and any topological abelian $\G $-module $A$ we can define the 
hypercohomology $ \Hy ^*(\chi ; sh(A) ) $ with coefficients in the sheaf of continuous equivariant $A$-valued functions. It is the
cohomology of a double complex $C^{..} $, where, for each $ p \geq 0 $, the cochain complex $ C^{p.} $ calculates $ H^*(\chi _p; sh(A)) $.

\begin{defn}

$ H_{\G }^*(X;A) = \Hy ^*(\chi ; sh(A)) $

\end{defn}

These groups are the abutment of a spectral sequence with $ E_1^{pq} = H^q(G_p \times _{\pi} X ; sh(A)) $.

\begin{lemma}

If $\G$ is a Bredon-compatible Lie groupoid and $X$ is a finite $\G$-CW-complex, 
$ H_{\G}^{p+1}(X ; \Z ) \cong H_{\G}^p(X ; S^1 ) $ for any $ p > 0 $.

\end{lemma}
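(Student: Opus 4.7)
The plan is to derive the isomorphism from the short exact sequence of topological abelian $\G$-modules
\[
0 \longrightarrow \Z \longrightarrow \R \longrightarrow S^1 \longrightarrow 0,
\]
where $\Z$, $\R$, $S^1$ are given the trivial $\G$-module structure $G_0 \times \Z$ etc. The strategy is to show that this sequence induces a long exact sequence in the groups $H_{\G}^*(X;-)$, and then to show that $\R$ is acyclic, so that the connecting homomorphism yields the desired isomorphism for $p > 0$.

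First, I would verify that the short exact sequence of $\G$-modules gives a short exact sequence of sheaves $sh(\Z) \to sh(\R) \to sh(S^1)$ on each piece $G_p \times_\pi X$ of the nerve $\chi$. Because $\R \to S^1$ is a locally trivial principal $\Z$-bundle and $X$ and the $G_p \times_\pi X$ are paracompact (being finite $\G$-CW-complexes or associated spaces), the usual long exact sequence in sheaf cohomology holds at each level. Passing through the double complex $C^{\bullet\bullet}$ defining $\Hy^*(\chi;sh(-))$, we obtain a long exact sequence
\[
\cdots \to H_{\G}^p(X;\Z) \to H_{\G}^p(X;\R) \to H_{\G}^p(X;S^1) \to H_{\G}^{p+1}(X;\Z) \to \cdots
\]

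Next I would show that $H_{\G}^p(X;\R) = 0$ for all $p \geq 1$. Using the spectral sequence with $E_1^{pq} = H^q(G_p \times_\pi X;sh(\R))$ that abuts to $H_{\G}^{p+q}(X;\R)$, it suffices to show $E_1^{pq}=0$ for $q > 0$ and that the $E_1^{p0}$ row computes only the $0$-th hypercohomology in positive degrees — but in fact we only need $E_1^{pq}=0$ for $q>0$ together with vanishing of the $p$-column cohomology in high degrees. The key point is that the sheaf of continuous $\R$-valued functions on any paracompact Hausdorff space is soft (equivalently, fine, via partitions of unity), hence has vanishing higher sheaf cohomology. Thus $E_1^{pq} = 0$ for $q \geq 1$, and the spectral sequence collapses onto the row $q=0$. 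A parallel Čech-type argument, using that the nerve of an equivariant cover by $\G$-cells refines along the simplicial direction and that $\R$ admits $\G$-equivariant partitions of unity (this is where Bredon-compatibility and the finite $\G$-CW structure enter), shows the remaining differentials on the $q=0$ row vanish in positive total degree. Consequently $H_{\G}^p(X;\R) = 0$ for $p \geq 1$.

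Granting these two inputs, the long exact sequence degenerates for $p \geq 1$ into isomorphisms
\[
H_{\G}^p(X;S^1) \xrightarrow{\ \cong\ } H_{\G}^{p+1}(X;\Z),
\]
which is the desired statement. The main obstacle will be the acyclicity of $\R$: sheaf-level softness gives vanishing of $E_1^{pq}$ for $q>0$ without trouble, but I will need to argue carefully that the horizontal differentials on the $q=0$ row also vanish in positive degree, which requires producing genuine equivariant partitions of unity on the simplicial space $\chi$. For finite $\G$-CW-complexes with $\G$ Bredon-compatible, this follows from reduction to the $\G$-cell case, where $\G \rtimes U$ is weakly equivalent to the action of a compact Lie group on a finite $G$-CW-complex and the standard equivariant partition of unity argument applies; naturality of the hypercohomology under weak equivalence then propagates the vanishing back to $\G$.
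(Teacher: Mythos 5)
Your route is genuinely different from the paper's. The paper's proof is a one-step cellular induction: it compares the Atiyah--Hirzebruch-type spectral sequences for the $\G$-CW-filtration of $X$ associated to the two functors $H_{\G}^{p+1}(-;\Z)$ and $H_{\G}^{p}(-;S^1)$, and observes that the comparison map is an isomorphism on each $\G$-cell because there the groupoid is weakly equivalent to a compact Lie group action, where the statement is the corresponding result of Atiyah--Segal. You instead reprove the Atiyah--Segal argument directly at the groupoid level: exponential coefficient sequence, softness of $sh(\R)$ to collapse the $q>0$ part of $E_1$, and vanishing of the bottom row to conclude $H_{\G}^{p}(X;\R)=0$ for $p\geq 1$. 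Your approach is more self-contained and would also yield the acyclicity of $\R$-coefficients as a usable byproduct, whereas the paper's is shorter because it outsources all the analytic input to the compact case. One caution on your key step: partitions of unity (softness) only account for the vanishing of $E_1^{pq}$ for $q>0$; the vanishing of the cohomology of the row $E_1^{p0}=\mathrm{Map}(G_p\times_{\pi}X,\R)$ in positive degree is a statement about continuous groupoid cohomology with coefficients in a real vector space, and the mechanism there is not a partition of unity on $\chi$ but an averaging operator --- Haar measure in the compact-group case, or a Haar system with cutoff function for the proper groupoid $\G\rtimes X$. Your fallback of reducing to $\G$-cells and invoking weak-equivalence invariance does work, but it silently requires a Mayer--Vietoris property for $H_{\G}^{*}(-;\R)$ in the $X$-variable (or the Haar-system averaging), which you should state explicitly; as written, the sentence conflates the sheaf-direction and groupoid-direction vanishing. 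With that point made precise, your argument is correct and arguably supplies more detail than the proof in the paper.
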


\begin{proof}
If we compare the spectral sequences for the $\G$-CW-structure of $X$ with respect to the cohomology theories 
$ H_{\G}^{p+1}(- ; \Z ) $ and $ H_{\G}^p(- ; S^1 ) $, we notice that we have an isomorphism in each cell by the
similar result in \cite{AS2}. 
%Because of the exact sequence
%
%\[ 0 \rightarrow sh(G_0 \times \Z ) \rightarrow sh(G_0 \times \R ) \rightarrow sh(G_0 \times S^1) \rightarrow 0 \]
%
%it is enought to show that $ H_{\G }^p(\chi ;\R ) = 0 $ for $ p > 0 $. As $ E_1^{pq} =0 $ for $ q > 0 $ in the spectral 
%sequence when $ A = G_0 \times \R $, we see that $ H_{\G }^*(X;  \R ) $ is simply the cohomology of the cochain complex of 
%continuous equivariant real-valued functions on the simplicial space $\chi $, which is easily recognized as the complex of 
%continuous Eilenberg-Maclane cochains of the groupoid $\G$ with values in the topological vector space 
%Map$(X,G_0 \times \R )$. This complex is acyclic in positive degrees. It is the $\G $-invariant part of the contractible 
%complex of so-called homogeneous cochains, and taking the $\G $-invariants is an exact functor, simply because cochains can be 
%averaged over $\G $.
\end{proof}

\begin{prop}

Let $\G$ be a Bredon-compatible finite Lie groupoid and $X$ a $\G $-space. Then we have:

\begin{enumerate}

\item $ H_{\G }^2(G_0; \Z ) \cong Hom(\G , G_0 \times S^1 ) $

\item $ H_{\G }^3(G_0; \Z ) \cong Ext(\G , G_0 \times S^1 ) $, the group of central extensions $ 1 \rightarrow
G_0 \times S^1 \rightarrow \H \rightarrow \G \rightarrow 1 $.

\item $ H_{\G }^2(X; \Z ) \cong Pic_{\G }(X) $

\item $ H_{\G }^3(X; \Z ) \cong Proj_{\G }(X) $

\end{enumerate}

\end{prop}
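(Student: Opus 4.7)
The plan is to derive parts (1) and (2) from the spectral sequence $E_1^{pq} = H^q(G_p \times_\pi X; sh(A))$ together with the shift isomorphism $H_{\G}^{p+1}(X;\Z) \cong H_{\G}^p(X;S^1)$, and then bootstrap to parts (3) and (4) using representability of classifying spaces applied to the nerve $\chi$ whose realization is $X_{\G}$. The whole argument parallels the Atiyah--Segal proof in \cite{AS2}, with the realization of the nerve of the action groupoid playing the role of the Borel construction $X_G = EG \times_G X$.

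For (1), the shift lemma identifies $H_{\G}^2(G_0;\Z) \cong H_{\G}^1(G_0;S^1)$. Expanding the spectral sequence with $X=G_0$ and $A = S^1$, the relevant $E_2$-term reduces to the simplicial cohomology of the nerve with $S^1$-coefficients, and in degree $1$ this is precisely the group of continuous cocycles $c: G_1 \to S^1$ satisfying $c(gh)=c(g)c(h)$ modulo coboundaries, which I identify with $Hom(\G, G_0 \times S^1)$. For (2), the shift gives $H_{\G}^3(G_0;\Z) \cong H_{\G}^2(G_0;S^1)$, and a continuous $2$-cocycle $c: G_2 \to S^1$ builds a central $S^1$-extension $\H$ of $\G$ by twisting composition, $\H_1 = G_1 \times S^1$ with $(g,\lambda)(h,\mu) = (gh, \lambda\mu c(g,h))$. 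A routine check shows that coboundary-equivalent cocycles produce isomorphic extensions, and conversely every central extension admits a continuous section producing such a cocycle, giving the bijection with $Ext(\G, G_0 \times S^1)$.

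For (3) and (4), I pass to representability. A $\G$-line bundle on $X$ corresponds to a principal $S^1$-bundle on the simplicial space $\chi$; such bundles are classified by simplicial homotopy classes of maps into $BS^1 = K(\Z, 2)$, which by the same spectral sequence equals $H_{\G}^2(X;\Z)$. For (4), the structure group of a $\G$-stable projective bundle is $PU(H)$ in the strong topology, which is a $K(\Z, 2)$ because the strong unitary group $U(H)$ is contractible; consequently $BPU(H)$ is a $K(\Z, 3)$, and the analogous classification on $\chi$ yields $Proj_{\G}(X) \cong H_{\G}^3(X;\Z)$.

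The main obstacle I anticipate is in (4): justifying that $\G$-stable projective bundles are genuinely classified by simplicial homotopy classes of maps into $BPU(H)$ and that this classification matches the hypercohomology group. Stability is essential here, since it guarantees that the failure of the $\G$-action to lift to the $U(H)$-level is the only obstruction and that this obstruction class lives in $H_{\G}^3(X;\Z)$. I would combine the contractibility of the strong unitary group with the existence and uniqueness of locally universal $\G$-Hilbert bundles (Proposition~2.6 and its local-equivalence invariance in Proposition~2.8), which together pin down the classifying data up to isomorphism and allow the cocycle description to be read off functorially.
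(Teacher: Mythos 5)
Your outline for part (1) matches the paper's argument (collapse of the spectral sequence for $X=G_0$ to $E_2^{10}=H^1_{c.c.}(\G;S^1)$), but part (2) contains a genuine error: you assert that ``every central extension admits a continuous section producing such a cocycle.'' That is false in general. A central extension $1\to G_0\times S^1\to\H\to\G\to 1$ is in particular a principal $S^1$-bundle over $G_1$, and this bundle need not be trivial, so no global continuous section need exist. Your argument would only identify $H^2_{c.c.}(\G;S^1)$ with the subgroup of extensions that are trivial as circle bundles. The paper handles exactly this point with the exact sequence $0\to E_2^{20}\to H^2_\G(G_0;S^1)\to E_2^{11}\to E_2^{30}$: the term $E_2^{11}=Pic(\G)_{prim}$ records the underlying circle bundle of the extension (a primitive line bundle on $G_1$), the map to $E_2^{30}=H^3_{c.c.}(\G;S^1)$ is the obstruction to rigidifying the multiplicative structure into an associative product, and $E_2^{20}=H^2_{c.c.}(\G;S^1)$ accounts for the topologically trivial extensions. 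Without this two-step dévissage you do not recover all of $Ext(\G,G_0\times S^1)$.

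For (3) and (4) you propose classifying bundles on the simplicial space $\chi$ by maps into $BS^1$ and $BPU(H)$. This is a different route from the paper, and for (4) it does not close the main gap, which you yourself flag. A map (or simplicial map) from the nerve $\chi$ into $BPU(H)$ classifies a projective bundle on the homotopy quotient $X_\G$, not a genuine $\G$-stable projective bundle on $X$; the hard direction is surjectivity of $Proj_\G(X)\to H^3_\G(X;\Z)$, i.e., descending a class on $X_\G$ to an honest equivariant bundle on $X$ with locally universal fibres. Contractibility of $U(H)$ and uniqueness of locally universal Hilbert bundles do not by themselves produce such a descent. The paper instead (a) proves injectivity by comparing the filtration $Proj_\G(X)\supseteq Proj^{(1)}\supseteq Proj^{(0)}$ (cocycle descriptions of increasing rigidity) with the spectral-sequence filtration of $H^2_\G(X;S^1)$, and (b) proves surjectivity by building, via the Elmendorf-type bar construction of Theorem 6.3 over the orbit category $O_\G$, a universal $\G$-space $C(P)$ carrying a tautological $\G$-stable projective bundle, and checking $[X,C(P)]_\G\cong H^3_\G(X;\Z)$ cell by cell using the compact Lie group case of Atiyah--Segal. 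Your proposal is missing both the injectivity filtration argument and any construction playing the role of $C(P)$, so as written parts (2) and (4) are not established.
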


\begin{proof}

\begin{enumerate} 

\item When $ X = G_0 $, we have $ E_1^{0q} = H^q(G_0;sh(G_0 \times S^1)) \cong H^q(pt;sh(S^1)) = 0 $ and in the previous lemma
we have seen that $ E_2^{p0} = H_{c.c.}^p(\G ; S_1 ) $ is the cohomology of $\G $ defined by continuous Eilenberg-Maclane
cochains. So
\[  H_{\G }^2(G_0 ; \Z ) = H_{\G }^1(G_0 ; S^1 ) \cong E_2^{10} \cong H_{c.c.}^1(\G ; S_1 )  
\cong Hom(\G , G_0 \times S^1 ) \]
\item In this case the spectral sequence gives us an exact sequence
\[ 0 \rightarrow E_2^{20} \rightarrow H_{\G }^2(G_0 ; S^1 ) \rightarrow E_2^{11} \rightarrow E_2^{30} \]
that is,
\[ 0 \rightarrow H_{c.c.}^2(\G ; S_1 ) \rightarrow H_{\G }^2(G_0 ; S^1 ) \rightarrow Pic(\G)_{prim} \rightarrow H_{c.c.}^3(\G ; S_1 ) \]
for $ E_1^{11} = H^1(\G;sh(G_0 \times S^1)) = Pic(\G ) $, and $ E_2^{11} $ is the subgroup of primitive elements, that is, of circle
bundles $ \H $ on $ \G $ such that $ m^*\H \cong pr_1^\H \otimes pr_2^*\H $, where $ pr_1,pr_2,m: G_1 \times _{G_0} G_1 \longrightarrow G_1 $
are the obvious maps. Equivalently, $Pic(G)_{prim} $ consists of circle bundles $\H $ on $\G $ equipped with bundle maps $ \tilde{m} 
: H_1 \times _{G_0 } H_1 \longrightarrow H_1 $ covering the multiplication in $\G $. It is easy to see that the composite
\[ Ext(\G , G_0 \times S^1 ) \rightarrow H_{\G }^2(G_0 ; G_0 \times S^1 ) \rightarrow Pic(\G ) \]
takes an extension to its class as a circle bundle. On the other hand $ H_{c.c.}^2(\G ; S^1 ) $ is plainly the group
of extensions $ G_0 \times S^1 \rightarrow \H \rightarrow \G $ which as circle bundles admit a continuous section, so its image
in Ext$(\G , G_0 \times S^1 ) $ is precisely the kernel of this composite. It remains only to show that the image of Ext$(\G , G_0 \times S^1)$
in Pic$(\G)_{prim}$ is the kernel of $ Pic(\G)_{prim} \longrightarrow H_{c.c.}^3(\G ; S_1 ) $. This map associates to
a bundle $ \H $ with a bundle map $ \tilde{m} $ as above precisely the obstruction to changing $\tilde{m} $ by a bundle map
$ G_1 \times _{G_0 } G_1 \longrightarrow S^1 $ to make it an associative product on $ \H $.
\newline

\item The spectral sequence gives
\[ 0 \rightarrow E_2^{10} \rightarrow H_{\G }^1(X; S^1) \rightarrow E_2^{01} \rightarrow E_2^{20} \]
Now $ E_1^{01} = Pic(X) $, and $ E_2^{01} $ is the subgroup of circle bundles $ S \longrightarrow X $ which admit a bundle
map $ \tilde{m} : G_1 \times _{G_0 } S \longrightarrow S $ covering the $ \G $-action on $X$. As before, $\tilde{m} $ can be
made into a $\G $-action on $S$ if and only if an obstruction in $ H_{c.c.}^2(\G ; Map(X,G_0 \times S^1)) $ vanishes. Finally,
the kernel of $ Pic_{\G }(X) \longrightarrow Pic(X) $ is the group of $\G $-actions on $ X \times S^1 $, and this is just
$ E_2^{10} = H_{c.c.}^1(\G ; Map(X, G_0 \times S^1)) $.
\newline

\item First we shall prove that the map $ Proj_{\G }(X) \longrightarrow H_{\G }^3(X; \Z) $ is injective.

Consider the filtration
\[ Proj_{\G }(X) \supseteq Proj^{(1)} \supseteq Proj^{(0)} \]
Here $ Proj^{(1)} $ consists of the stable projective bundles which are trivial when the $\G $-action is forgotten, that is, those
that can be described by cocycles $ \alpha : G_1 \times _{G_0 } X \longrightarrow PU(\H) $ such that
$ \alpha (g_2,g_1x) \alpha (g_1,x) = \alpha (g_2g_1,x) $.

$Proj^{(0)} $ consists of those projective bundles for which $\alpha $ lifts to $ \bar{\alpha} : G_1 \times _{G_0 } X \longrightarrow U(\H ) $ satisfying
$ \alpha (g_2,g_1x) \alpha (g_1,x) = c(g_2,g_1,x) \alpha (g_2g_1,x) $ for some $ c : G_2 \times _{\pi } X \longrightarrow S^1 $

We shall compare the filtration of Proj$_{\G }(X) $ with the filtration
\[ H_{\G }^2(X; S^1) \supset H^{(1)} \supset H^{(0)} \]
defined by the spectral sequence. By definition $ H^{(1)} $ is the kernel of $ H_{\G}^2(X ; S_1 ) \longrightarrow
E_1^{02} = H^2(X ; sh( S_1 )) = Proj(X) $, and the composite $ Proj_{\G } \rightarrow H_{\G }^2(X; S_1) \rightarrow
Proj(X) $ is clearly the map which forgets the $\G $-action. Thus $Proj_{\G }(X) / Proj^{(1)} $ maps injectively to $ H_{\G}^2(X,S^1) /
H^{(1)} $

Now let us consider the map $ Proj^{(1)} \longrightarrow H^{(1)} $. The subgroup $ H^{(0)} $ is the kernel of $ H^{(1)} \longrightarrow
E_2^{11} $, while $ E_1^{11} = Pic (G_1 \times _{G_0} X) $. We readily check that an element of $ Proj^{(1)}$ defined by the 
cocycle $\alpha $ maps to the element of $Pic(G_1 \times _{G_0 } X) $ which is the pullback of the circle bundle $U(\H ) \longrightarrow
PU(\H ) $, and can conclude that $\alpha $ maps to zero in $ E_2^{11} $ if and only if it defines an element of $Proj^{(0)} $.
Thus $ Proj^{(1)} / Proj^{(0)} $ injects into $ H^{(1)} / H^{(0)} $. Finally, assigning to an element $\alpha $ of $Proj^{(0)} $
the class in $ E_2^{20} = H_{c.c.}^2(\G ; Map (X, G_0 \times S^1 ) ) $ of the cocycle $c$, we see that if this class vanishes,
then the projective bundle comes from a $\G $-Hilbert bundle, which is necessarily trivial, as we have already explained. So
$Proj^{(0)} $ injects into $ H^{(0)} $.

Now we will construct a universal $\G $-space $C(P)$ with a natural $\G $-stable projective bundle on it and show that the composite 
map
\[  [ X , C(P) ] _{\G} \rightarrow Proj_{\G }(X) \rightarrow H_{\G}^3(X;\Z ) \]
is an isomorphism.

Let $U$ be a $\G$-CW-cell and consider the spaces
\[ P(U) = \mathop{\coprod } _{H \in Ext(U,S^1)} BPU(H)^U \]
where we represent an element of $ Ext(U,S^1) $ by the essentially unique Hilbert bundle $H$ with a stable projective representation
of $ \G \rtimes U $ inducing the extension. 

In fact, $P$ is an $O_{\G}$-space. Now, we can use theorem 6.3 to construct the $\G$-space $C(P)$. This space satisfies
$ C(P)^U \simeq P(U) $ for every $\G$-orbit $U$. Also, it carries a tautological $\G$-stable projective bundle, and so
we have a $\G$-map $ C(P) \to Map(E\G,BPU(H)) $ into the space that represents the functor $ X \to H_{\G}^3(X,\Z ) $.
This map induces an isomorphism
\[ [ X , C(P) ]_{\G} \longrightarrow H_{\G}^3(X,\Z ) \]
it is enough to check the cases $ X = U \times S^i $, where $U$ is a $\G$-orbit. In fact, since finite $G$-CW-complexes are
built out of spaces of the form $ G/H \times S^i $, where $H$ is a closed subgroup of $G$, it is enough to check this for
the cases $ X = U \times S^i $, where $ \G \rtimes U $ is weakly equivalent to $ G \rtimes G/H $.  But this reduces to proving
the isomorphism $ \pi _i(P(U)) \cong H^{3-i}(B(\G \rtimes U),\Z ) $, which follows from the diagram:
\diagram
\pi _i(P(U)) & \rTo & H^{3-i}(B(\G \rtimes U),\Z ) \\
\dTo^{\cong} & & \dTo^{\cong} \\
\pi _i(P_H) & \rTo^{\cong} & H^{3-i}(BH, \Z ) \\
\enddiagram
where the map in the bottom row is an isomorphism by the results in \cite{AS2}.

\end{enumerate}

\end{proof}

\end{document}